\numberwithin{equation}{section}
\theoremstyle{plain}
\newtheorem{Th}{Theorem}[section]
\newtheorem{lemma}[Th]{Lemma}
\newtheorem{cor}[Th]{Corollary}
\newtheorem{prop}[Th]{Proposition}
 \theoremstyle{definition}
\newtheorem{Rem}[Th]{Remark}
\newtheorem{?}[Th]{Problem}
\newcommand{\R}{\mathbb{R}}
\newcommand{\II}{{\rm{II}}}
\newcommand{\D}{\mathbb{D}}
\newcommand{\B}{\mathbb{B}}
\newcommand{\e}{{\rm {exp}}}
\newcommand{\A}{\mathcal A}
\newcommand{\F}{\mathcal F}
\newcommand{\supp}{{\rm supp}}
\begin{document}

\title[Mean First Arrival Time of Brownian Particles]{On the Mean First Arrival Time of Brownian Particles on Riemannian Manifolds}
\author[Medet Nursultanov] {M. Nursultanov}
\address {School of Mathematics and Statistics, University of Sydney}
\email{medet.nursultanov@gmail.com }

\author[Leo Tzou]{L. Tzou}
\thanks{M. Nursultanov and L. Tzou are partially supported by ARC DP190103302  and ARC DP190103451 during this work. We thank Ben Goldys for the helpful discussion.}
\address {School of Mathematics and Statistics, University of Sydney}
\email{leo.tzou@gmail.com}

\author[Justin Tzou]{J.C. Tzou}

\address {Department of Mathematics and Statistics, Macquarie University}
\email{tzou.justin@gmail.com}

 \subjclass[2010]{Primary: 58J65  Secondary: 60J65, 58G15, 92C37 }

 \keywords{}

\begin{abstract}
We use geometric microlocal methods to compute an asymptotic expansion of mean first arrival time for Brownian particles on Riemannian manifolds. This approach provides a robust way to treat this problem, which has thus far been limited to very special geometries. This paper can be seen as the Riemannian 3-manifold version of the planar result of \cite{ammari} and thus enable us to see the full effect of the local extrinsic boundary geometry on the mean arrival time of the Brownian particles. Our approach also connects this question to some of the recent progress on boundary rigidity and integral geometry \cite{pestovuhlmann, monard}.
\end{abstract}

\maketitle

\begin{section}{Introduction}

Let $(M,g,\partial M)$ be a compact connected orientable Riemannian manifold with non-empty smooth boundary and without loss of generality we may assume that it is an open subset of an orientable Riemannian manifold $(\tilde M, g)$ without boundary oriented by the Riemannian volume form ${\rm dvol}_g$. Let also $(X_t, \mathbb{P}_x)$ be the Brownian motion on $M$ with initial condition at $x$, that is, the stochastic process generated by the Laplace-Beltrami operator $\Delta_g$ (this article uses the convention $\Delta_g = -d^*d$ with negative spectrum, where $d$ is the exterior derivative). For any $\Gamma\subset \partial M$ open we denote by $\tau_{\Gamma}$ the first time the Brownian motion $X_t$ hits $\Gamma$, that is
	\begin{equation*}
	\tau_{\Gamma} := \inf \{ t\geq 0: X_t \in \Gamma\}.
	\end{equation*}

In the case when $\Gamma  = \Gamma_{\epsilon, a}$ is a small elliptic window of eccentricity $\sqrt{1-a^2}$ and size $\epsilon \to 0^+$ (to be made precise later), the narrow escape/mean first arrival time problem wishes to derive an asymptotic expansion as $\epsilon \to 0$ for the expected value $\mathbb{E}[\tau_{\Gamma_{\epsilon, a}} | X_0 = x]$ of the first arrival time $\tau_{\Gamma_{\epsilon, a}}$ amongst all Brownian particles starting at $x$. Another quantity of interest is the {\em average} expected value over $M$:
$$|M|^{-1} \int_M \mathbb{E}[\tau_{\Gamma_{\epsilon, a}} | X_0 = x] {\rm dvol}_g(x).$$
Here $|M|$ denotes the Riemannian volume of $M$ with respect to the metric $g$.

Many problems in cellular biology may be formulated as mean first arrival time problems; a collection of analysis methods, results, applications, and references may be found in \cite{holcman2015stochastic}. For example, cells have been modelled as simply connected two-dimensional domains with small absorbing windows on the boundary representing ion channels or target binding sites; the quantity sought is then the mean time for a diffusing ion or receptor to exit through an ion channel or reach a binding site \cite{schuss2007narrow, holcman2004escape, pillay2010asymptotic}.

There has been much progress for this problem in the setting of planar domains, and we refer the readers to \cite{holcman2004escape, pillay2010asymptotic, singer2006narrow, ammari} and references therein for a complete bibliography. An important contribution was made in the planar case by \cite{ammari} to introduce rigor into the computation of \cite{pillay2010asymptotic}. The use of layered potential in \cite{ammari} also cast this problem in the mainstream language of elliptic PDE and facilitates some of the approach we use in this article.

Few results exists for three dimensional domains in $\R^n$ or Riemannian manifolds; see \cite{cheviakov2010asymptotic,schuss2006ellipse,singer2008narrow, GomezCheviakov} and references therein. The additional difficulties introduced by higher dimension are highlighted in the introduction of \cite{ammari} and the challenges in geometry are outlined in \cite{singer2008narrow}. In the case when $M$ is a domain in $\R^3$ with Euclidean metric and $\Gamma_{\epsilon, a}$ is a single small disk absorbing window, \cite{schuss2006ellipse, singer2008narrow} gave an expansion for the average of the expected first arrival time, averaged over $M$, up to an unspecified $O(1)$ term:
\begin{eqnarray}
\label{O of 1 term}
|M|^{-1} \int_M \mathbb{E}[\tau_{\Gamma_{\epsilon, a}} | X_0 = x] {\rm dvol}_g(x) \sim \frac{|M|}{4\epsilon}\left\lbrack 1 - \frac{\epsilon}{\pi}H\log\epsilon + \mathcal{O}(\epsilon) \right\rbrack. 
\end{eqnarray}
Here, $H$ is the mean curvature of the boundary at the center of the absorbing window. The case when $\Gamma_{\epsilon,a}$ is a small elliptic window was also addressed in \cite{schuss2006ellipse, singer2008narrow}.

When $M$ is a three dimensional ball with multiple circular absorbing windows on the boundary, an expansion capturing the explicit form of the $\mathcal{O}(1)$ correction in equation \eqref{O of 1 term} in terms of the Neumann Green's function and its {\em regular part} was done in \cite{cheviakov2010asymptotic}. The method of matched asymptotic used there required the explicit computation of the Neumann Green's function, which is only possible in special geometries with high degrees of symmetry/homogeneity. In these results one does not see the full effects of local geometry. This result was also rigorously proved in \cite{ChenFriedman} but with a better estimate for the error term.

In this paper we outline an approach which allows one to derive all the main terms of $ \mathbb{E}[\tau_{\Gamma_{\epsilon, a}} | X_0 = x]$ (up to a remainder vanishing as $\epsilon \to 0$) for Riemannian manifolds of dimension three with a multiple number of small absorbing windows which are boundary geodesic balls or ellipses. We will only demonstrate this approach for one absorbing window so as to not obscure the main idea. In the case when the window is a geodesic ball our approach also adapts naturally to Riemannian manifolds of any dimension as the proof of Proposition \ref{prop G expansion} as well as the analysis for inverting a key integral equation on the ball in Section \ref{normal operator} both carry through to higher dimensions.

We discuss briefly here on how to obtain a comprehensive singularity expansion at the boundary for the Neumann Green's function on a Riemannian manifold as the Euclidean case was of interest in \cite{singer2008narrow} and \cite{cheviakov2010asymptotic}. We will define in Section 3 the Neumann Green's function $G(x,z)$ on $(M,g,\partial M)$ which satisfies
$$\Delta_g G(x,z) = - \delta_x\ ,\partial_{\nu_z} G(x,z) \mid_{z\in \partial M} =\frac{-1}{ |\partial M|},\ \ \int_{\partial M} G(x,z) {\rm dvol}_{\partial M}(z) = 0,$$
where $z\in \partial M \mapsto \nu_z$ is a outward pointing normal vector field and $|\partial M|$ is the area of the boundary.

Singer-Schuss-Holcman in \cite{singer2008narrow} highlighted the difficulty in obtaining a comprehensive singularity expansion of $G(x,z)\mid_{x,z\in\partial M, x\neq z}$ in a neighbourhood of the diagonal $\{x=z\}$ when $M$ is a bounded domain in $\R^n$, but it turns out that even when $M$ is a general Riemannian manifold this question can be treated by the standard pseudodifferential operators approach. We only carry out this calculation in three dimensions as it pertains to our application. Readers who are interested in the higher dimensional analogue can follow our treatment to carry out the (cumbersome) calculations for themselves:
\begin{prop}
\label{prop G expansion}
For $x,y\in \partial M$, set $H(x)$ to be the mean curvature of $\partial M$ at $x$, $d_h(x,y)$ the geodesic distance on the boundary given by metric $h := \iota_{\partial M}^*g$, $d_g(x,y)$ the geodesic distance given by the metric $g$, and 
$$\II_x(V) := \II_x(V,V),\ \ V\in T_x\partial M$$
the scalar second fundamental quadratic form (see pages 235 and 381 of \cite{lee} for definitions).\\
i) The map 
$$f\in C^\infty(\partial M) \mapsto \left.\left(\int_{\partial M} G(\cdot,y) f(y) d{\rm vol}_h(y)\right)\right\vert_{ \partial M}$$
is well defined and extends to a map from $H^{k}(\partial M) \to H^{k+1}(\partial M)$ for all $k\in \R$ whose Schwartz kernel we will denote by $G_{\partial M}(x,y)\in {\mathcal D}'(\partial M\times \partial M)$. Here the map $u\in H^1(M)\mapsto u\mid_{\partial M} \in H^{1/2}(\partial M)$ is the trace map. \\
ii)
There exists an open neighbourhood of the diagonal 
$${\rm Diag}:=\{(x,y)\in \partial M\times \partial M \mid x=y\} $$
 such that in this neighbourhood, the singularity structure of $G_{\partial M}(x,y)$ is given by:
\begin{eqnarray}
\label{G expansion}
\quad \quad G_{\partial M}(x,y) &=& \frac{1}{2\pi} d_g(x,y)^{-1} - \frac{1}{4\pi}  {H(x)} \log d_{h}(y,x) \\\nonumber&& + \frac{1}{16\pi} \left(\II_x\left (\frac{\exp_{x; h}^{-1} (y)}{|\exp_{x;h}^{-1} (y)|_h}\right) - \II_x\left (\frac{*\exp_{x;h}^{-1} (y)}{|\exp_{x;h}^{-1} (y)|_h}\right)\right) + R(x,y),
\end{eqnarray}
where $R(\cdot,\cdot) \in C^{0,\mu}(\partial M\times \partial M)$, for all $\mu<1$, is called {\em the regular part} of the Green's function and $*$ is the Hodge-star operator (i.e. rotation by $\pi/2$ on the surface $\partial M$).
\end{prop}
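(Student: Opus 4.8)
The plan is to recognise $G_{\partial M}$ as a classical pseudodifferential operator of order $-1$ on the closed surface $\partial M$, to compute its symbol via a local parametrix for the Neumann Green's function in boundary normal coordinates, and then to convert the first two terms of that symbol into the stated kernel expansion near ${\rm Diag}$. Part (i) follows from the construction in (ii): away from ${\rm Diag}$ the kernel $G(x,y)$, $x,y\in\partial M$, is smooth by elliptic regularity up to the boundary (for $x\ne y$), while in a neighbourhood of ${\rm Diag}$ it agrees, modulo a smooth kernel, with the parametrix below, which is visibly an oscillatory integral whose amplitude is a symbol of order $-1$ in the tangential frequency; hence $G_{\partial M}\in\Psi^{-1}(\partial M)$, which gives both the mapping property $H^k(\partial M)\to H^{k+1}(\partial M)$ for all $k$ and the existence of the Schwartz kernel. (Equivalently one checks directly that $Sf\vert_{\partial M}$ is the Neumann-to-Dirichlet operator of $(M,g)$ applied to $f$ minus its boundary mean, i.e. the partial inverse of the Dirichlet-to-Neumann map $\Lambda$; $\Lambda$ is an elliptic $\Psi$DO of order $1$ with one-dimensional kernel, and partial inverses of such operators are $\Psi$DOs of the opposite order.)

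For (ii) fix $p\in\partial M$ and work in Fermi coordinates $(x',\rho)$, with $\rho\ge 0$ the $g$-distance to $\partial M$, in which $g=d\rho^2+h_\rho$, $h_0=h$, $\partial_\rho h_\rho\vert_{\rho=0}=-2\,\II_x$, so that
$$\Delta_g=\partial_\rho^2+e(x',\rho)\,\partial_\rho+\Delta_{h_\rho},\qquad e\vert_{\rho=0}=-2H(x'),$$
with $\Delta_{h_\rho}$ the slice Laplacian, of full symbol $-|\xi|^2_{h_\rho}+(\text{order}\le 1)$ and $|\xi|^2_{h_\rho}=|\xi|^2_h+2\rho\,\II_x(\xi^\sharp)+O(\rho^2)$. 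One constructs $G$ near the corner $\{\rho=\sigma=0\}$ as an oscillatory integral in the tangential frequency $\xi$, which reduces $\Delta_g G(\cdot,y)=-\delta_y$ to a second-order ODE in $\rho$ with homogeneous Neumann condition at $\rho=0$ and decay as $\rho\to+\infty$. The model $(\partial_\rho^2-|\xi|^2_h)G_0=-\delta(\rho-\sigma)$, $\partial_\rho G_0\vert_{\rho=0}=0$, is solved by $G_0=\tfrac1{2|\xi|_h}\bigl(e^{-|\xi|_h|\rho-\sigma|}+e^{-|\xi|_h(\rho+\sigma)}\bigr)$ --- the free half-line kernel plus its Neumann image --- and the lower-order terms are produced by treating $e\,\partial_\rho$, the $\rho$-dependence of $|\xi|^2_{h_\rho}$ (through which $\II_x$ enters, via the coefficient $2\,\II_x(\xi^\sharp)$), and the subprincipal part of $\Delta_{h_\rho}$ (which, after centring $h$-normal coordinates at $p$, contributes nothing at $x=p$ at the orders needed) as perturbations through the resolvent identity. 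Setting $\rho=\sigma=0$ collapses the two exponentials and gives the symbol of $G_{\partial M}$ at $x=p$: principal part $|\xi|_h^{-1}$; an order $-2$ part $\dfrac{H(p)}{|\xi|^2_h}-\dfrac{\II_p(\xi^\sharp)}{2|\xi|^4_h}$ --- the first summand produced by $e$, the second by the $\rho$-perturbation of $|\xi|^2_{h_\rho}$ --- and terms of order $\le-3$.

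It remains to read off the kernel near ${\rm Diag}$ in $h$-normal coordinates $z=\exp_{p;h}^{-1}(y)$. The principal symbol $|\xi|_h^{-1}$ has kernel $\tfrac1{2\pi}|z|_h^{-1}=\tfrac1{2\pi}d_h(p,y)^{-1}$, and since $d_g^{-1}-d_h^{-1}=O(d_h)$ near ${\rm Diag}$ --- the minimising $g$-geodesic between boundary points bulges inward by a $\II$-controlled amount --- one may replace $d_h$ by $d_g$ here at the price of an error absorbed into $R$, recovering the first displayed term (as is also predicted by the interior Hadamard parametrix $\tfrac1{4\pi}d_g^{-1}$ together with its Neumann image). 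Splitting the order $-2$ symbol into its rotational average and its trace-free part: the average is $\dfrac{H(p)}{2|\xi|^2_h}$ --- the $+H/|\xi|^2$ from $e$ and the $-H/(2|\xi|^2)$ from averaging $\II_p(\xi^\sharp)/|\xi|^4$ combining --- whose kernel is $\tfrac{H(p)}{2}$ times that of $(-\Delta_h)^{-1}$, i.e. $-\tfrac1{4\pi}H(p)\log d_h(p,y)$ up to a Hölder error, the second displayed term; and the trace-free part $-\dfrac{1}{4|\xi|^2_h}\bigl(\II_p(\xi^\sharp/|\xi|_h)-\II_p(*\xi^\sharp/|\xi|_h)\bigr)$, an order $-2$ symbol proportional to the second angular harmonic, has a Fourier transform homogeneous of degree $0$ in $z$ --- the relevant Bessel integral carrying the direction $\xi^\sharp/|\xi|_h$ to $z/|z|_h$ up to a $\pi/2$ phase, which is the appearance of the Hodge star --- equal to $\tfrac1{16\pi}\bigl(\II_p(z/|z|_h)-\II_p(*z/|z|_h)\bigr)$, the third displayed term. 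All remaining symbol contributions of order $\le-3$, the $O(d_h)$ errors accumulated above, the smooth off-diagonal part of $G_{\partial M}$, and the smooth parametrix remainder are then collected into $R\in C^{0,\mu}(\partial M\times\partial M)$, valid for every $\mu<1$.

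The main difficulty is not any single step but the bookkeeping that makes the constants come out right. One must: (a) confirm that the order $-2$ contributions of $e=-2H$ and of the rotational average of the $\rho$-linear term $2\,\II_x(\xi^\sharp)$ combine to exactly $H/(2|\xi|^2)$, hence produce the precise coefficient $-\tfrac1{4\pi}$ of $H\log d_h$, and that nothing else enters at that order; (b) evaluate the anisotropic oscillatory integral $\int e^{iz\cdot\xi}\,\II_p(\xi^\sharp)\,|\xi|_h^{-4}\,d\xi$ --- keeping the non-negligible contribution of the image exponential in $G_0$ --- to land on the exact constant $\tfrac1{16\pi}$ and the correct placement of the Hodge star; and (c) verify the closing regularity statement, i.e. that truncating the construction one order past the displayed terms leaves an error whose Schwartz kernel is genuinely Hölder continuous up to the diagonal, uniformly. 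Everything else is routine symbol calculus together with standard elliptic boundary regularity.
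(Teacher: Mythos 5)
Your proposal is correct, and it takes a genuinely different route from the paper's.

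The paper proves (i) by showing that $I = G_{\partial M}\Lambda + P$ with $P$ the averaging projector, hence $G_{\partial M}$ is a parametrix for the elliptic operator $\Lambda \in \Psi^1_{cl}(\partial M)$; you reach the same conclusion via the (equivalent) observation that $G_{\partial M}$ is the partial inverse of $\Lambda$, and the paper's own Remark before its equation (3.12) notes this as a ``quick way.'' For (ii), however, the methods diverge. The paper works with the single and double layer potentials $S$ and $N$ (kernels $E$ and $\partial_{\nu_y} E$ on $\partial M\times\partial M$), uses Taylor's boundary integral identity to obtain $G_{\partial M} = -2S - 2NS + \Psi^{-3}_{cl}(\partial M)$, and then computes the symbols of $S$ and $NS$ via the coordinate expansions of $d_g^{-1}$ and $\partial_{\nu_y}d_g^{-1}$ (its Lemmas 2.9 and 2.10). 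You instead build the parametrix in Fermi coordinates by reducing $\Delta_g G=-\delta$ to the normal-variable ODE $(\partial_\rho^2 + e\,\partial_\rho - |\xi|^2_{h_\rho}+\cdots)G=-\delta(\rho-\sigma)$, solve the constant-coefficient model with the Neumann image term, and extract the subprincipal symbol by a one-step perturbation. Your constants check out: with $e|_{\rho=0}=-2H$ and $\partial_\rho h_\rho|_{\rho=0}=-2\II$, the first-order correction at $\rho=\sigma=0$ is
\begin{align*}
G_1(0) &= -\int_0^\infty \frac{e^{-|\xi|\rho}}{|\xi|}\Bigl(2H\bigl(-e^{-|\xi|\rho}\bigr) + \frac{2\rho\,\II(\xi^\sharp)}{|\xi|}e^{-|\xi|\rho}\Bigr)\,d\rho = \frac{H}{|\xi|^2} - \frac{\II(\xi^\sharp)}{2|\xi|^4},
\end{align*}
and splitting into the rotational average $\tfrac{H}{2|\xi|^2}$ plus the trace-free part $-\tfrac{1}{4|\xi|^2}\bigl(\II(\xi^\sharp/|\xi|)-\II(*\xi^\sharp/|\xi|)\bigr)$ reproduces exactly the $-\tfrac{1}{4\pi}H\log d_h$ and $\tfrac{1}{16\pi}(\II-\II\circ *)$ terms after Fourier inversion (the paper's Lemma 2.8 list of transforms confirms the $-\tfrac{1}{4\pi}$ and $\tfrac{1}{16\pi}$). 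The trade-offs: your ODE route is self-contained and makes the geometric provenance of $H$ and $\II$ transparent (the drift coefficient and the $\rho$-linear variation of $|\xi|^2_{h_\rho}$ respectively), whereas the paper's layer-potential decomposition produces, as a by-product, the Fredholm integral equation $R(I-N^*)=-G_{\rm sing}-2S+G_{\rm sing}N^*+2PS$ for numerically computing the regular part $R$, which is exploited in Remark 3.3 and would not fall out of your construction without extra work. One small point to tighten: your remark that ``$d_g^{-1}-d_h^{-1}=O(d_h)$'' should be understood at each fixed center $p$ in $h$-normal coordinates, where both inverses equal $|z|^{-1}(1+O(|z|))$; the paper's Lemma 2.3 and Corollary 2.5 give precisely this, with the required uniformity in $p$, so that the difference is a $\Psi^{-3}$-type kernel absorbable into $R$.
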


We recall the definition of the exponential map. Let $(X,g_0)$ be a geodesically complete manifold. For any $x\in X$ and $V\in T_xX$ there exists a unique geodesic $\gamma_{g_0}(t)=\gamma_{g_0}(t;V)$, defined on $[0,1]$, such that $\gamma_{g_0}(0)=x$, $\gamma_g'(0) = V$. The exponential map based at $x$ is then a map taking $T_xX \to X$ defined by 
$$\exp_{x;g_0} : V \mapsto \gamma_{g_0}(1;V).$$

Observe that when $M$ is a Euclidean ball the singular term involving the second fundamental forms vanishes due to homogeneity and therefore does not show up. This is consistent with the explicit formula derived in \cite{cheviakov2010asymptotic}.

An explicit formula for the regular part is only possible in special geometries such as the one considered in \cite{cheviakov2010asymptotic}. However, our approach in arriving at \eqref{G expansion} also provides a way to numerically compute $R(x,y)$ via a Fredholm integral equation. See Remark \ref{remark for solving for R}.

We will use the formula in Proposition \ref{prop G expansion} to derive the mean first arrival time of a Brownian particle on a Riemannian manifold with a single absorbing window which is a small geodesic ellipse. As mentioned earlier, our method extends to multiple windows but we present the single window case to simplify notations. We first state the result when the window is a geodesic {\em disk} of the boundary $\partial M$ around a fixed point since the statement is cleaner:
\begin{Th}
\label{main theorem disk}
Let $(M,g,\partial M)$ be a smooth Riemannian manifold of dimension three with boundary and let $|M|$ be its volume. \\
	i) Fix $x^*\in \partial M$ and let $\Gamma_{\epsilon}$ be a boundary geodesic ball centered at $x^*$ of geodesic radius $\epsilon >0$. For each $x\notin \Gamma_{\epsilon}$,
	$$ \mathbb{E}[\tau_{\Gamma_{\epsilon}} | X_0 = x] = F(x) + C_\epsilon -|M| G(x,x^*) + r_\epsilon (x),$$
	with $\|r_\epsilon\|_{C^k(K)} \leq C_{k,K} \epsilon$ for any integer $k$ and compact set $K\subset \overline M$ which does not contain $x^*$. The function $F$ is the unique solution to the boundary value problem
	$$\Delta_g F = -1,\ \ \partial_\nu F = -|M|/|\partial M|,\ \ \int_{\partial M} F = 0.$$
	The constant $C_\epsilon$ is, modulo an error of $O(\epsilon\log\epsilon)$, given by
	\begin{align*} \nonumber
		C_{\epsilon} = \frac{|M|}{4\epsilon} -\frac{1}{4\pi} H(x^*) |M| \log \epsilon  +  R(x^*, x^*) |M|- F(x^*) -\frac{|M|  H(x^*)}{4 \pi } \left(2\log 2 -\frac{3}{2}\right),
	\end{align*}
	where $R(x^*, x^*)$ is the evaluation at $(x,y) = (x^*, x^*)$ of the kernel $R(x,y)$ in \eqref{G expansion}. 
	
	ii) One has that the integral of $\mathbb{E}[\tau_{\Gamma_{\epsilon, a}} | X_0 = x]$ over $M$ satisfies 
	$$\int_M \mathbb{E}[\tau_{\Gamma_{\epsilon, a}} | X_0 = x] {\rm dvol}_g (x)= \int_M F(x) {\rm dvol}_g(x)+ C_\epsilon |M| - F(x^*) |M| + O(\epsilon).$$

\end{Th}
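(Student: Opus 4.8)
The plan is to identify $u_\epsilon(x):=\mathbb E[\tau_{\Gamma_\epsilon}\mid X_0=x]$ with the unique bounded solution of the mixed boundary value problem
\[
\Delta_g u_\epsilon=-1\ \text{in }M,\qquad u_\epsilon\vert_{\Gamma_\epsilon}=0,\qquad \partial_\nu u_\epsilon\vert_{\partial M\setminus\overline{\Gamma_\epsilon}}=0
\]
(classical potential theory for the reflected Brownian motion; finiteness of $\tau_{\Gamma_\epsilon}$, hence of $u_\epsilon$, follows from recurrence of the reflected process on the compact manifold $M$), and then to solve this problem perturbatively in $\epsilon$. Writing $\phi_\epsilon:=\partial_\nu u_\epsilon\vert_{\partial M}$, a distribution supported in $\overline{\Gamma_\epsilon}$, Green's second identity tested against the Neumann Green's function $G(x,\cdot)$ gives the \emph{exact} representation
\[
u_\epsilon(x)=F(x)+\bar u_\epsilon+\int_{\Gamma_\epsilon}G(x,z)\,\phi_\epsilon(z)\,d{\rm vol}_h(z),\qquad \bar u_\epsilon:=\frac1{|\partial M|}\int_{\partial M}u_\epsilon ,
\]
where $F(x)=\int_M G(x,z)\,d{\rm vol}_g(z)$ is the stated potential and, by the divergence theorem, $\int_{\Gamma_\epsilon}\phi_\epsilon\,d{\rm vol}_h=\int_M\Delta_g u_\epsilon=-|M|$. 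Imposing $u_\epsilon\vert_{\Gamma_\epsilon}=0$ converts this into a first kind integral equation for the pair $(\phi_\epsilon,\bar u_\epsilon)$ on $\Gamma_\epsilon$ with kernel $G_{\partial M}$, into which I substitute the singularity expansion \eqref{G expansion} of Proposition \ref{prop G expansion}.

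Next I would blow up the window to the unit disk $B_1\subset\R^2$: in $h$-geodesic normal coordinates at $x^*$ represent points of $\Gamma_\epsilon$ by $\epsilon\xi,\epsilon\eta$ with $\xi,\eta\in B_1$, and set $\psi_\epsilon(\eta):=\epsilon^2\phi_\epsilon(\epsilon\eta)$, so that $\int_{B_1}\psi_\epsilon=-|M|+O(\epsilon^2)$. Since $d_g$ and $d_h$ differ from the Euclidean distance $\epsilon|\xi-\eta|$ only at relative order $\epsilon^2$, while $H$, $\II$ and the H\"older part $R$ are constant across the window up to $o(1)$, the equation reduces to
\begin{multline*}
\frac1{2\pi\epsilon}\int_{B_1}\frac{\psi_\epsilon(\eta)}{|\xi-\eta|}\,d\eta
=-\bar u_\epsilon-F(x^*)-\frac{H(x^*)|M|}{4\pi}\log\epsilon \\
+\frac{H(x^*)}{4\pi}\int_{B_1}\log|\xi-\eta|\,\psi_\epsilon(\eta)\,d\eta+|M|\,R(x^*,x^*)+o(1),
\end{multline*}
the second fundamental form term dropping out because it is a pure $\cos 2\theta$ angular mode, which pairs to zero with the radially symmetric leading profile (this is precisely where the disk case is cleaner than the ellipse). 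The heart of the matter is the model operator $\mathcal S\psi(\xi):=\int_{B_1}|\xi-\eta|^{-1}\psi(\eta)\,d\eta$ analysed in Section \ref{normal operator}: it is invertible on a space adapted to the edge singularity $(1-|\eta|^2)^{-1/2}$, one has $\mathcal S[\sigma]=1$ for the equilibrium density $\sigma(\eta)=\pi^{-2}(1-|\eta|^2)^{-1/2}$ with $\int_{B_1}\sigma=2/\pi$, and $\langle \mathcal S^{-1}w,1\rangle=\langle w,\sigma\rangle$. Pairing the reduced equation with $\sigma$ and using the flux normalization $\int_{B_1}\psi_\epsilon=-|M|+O(\epsilon^2)$ produces a hierarchy that pins down $\bar u_\epsilon$ term by term: the $\epsilon^0$ balance gives the singular term $|M|/(4\epsilon)$; the $\epsilon\log\epsilon$ balance gives $-H(x^*)|M|(\log\epsilon)/(4\pi)$; and the $\epsilon^1$ balance gives the constant, in which $R(x^*,x^*)|M|-F(x^*)$ comes from the constant terms of the equation, while $-(|M|H(x^*)/4\pi)(2\log2-\tfrac32)$ emerges as a multiple of the logarithmic energy $\iint_{B_1\times B_1}\log|\eta-\eta'|\,\sigma(\eta)\sigma(\eta')\,d\eta\,d\eta'$ of $\sigma$, which evaluates explicitly to the stated value. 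Feeding this back into the bulk: for $x$ in a compact $K\not\ni x^*$, expanding $G(x,z)=G(x,x^*)+O(\epsilon)$ for $z\in\Gamma_\epsilon$ and using $\|\phi_\epsilon\|_{L^1(\Gamma_\epsilon)}=O(1)$ gives $\int_{\Gamma_\epsilon}G(x,z)\phi_\epsilon=-|M|G(x,x^*)+r_\epsilon(x)$ with $r_\epsilon$ controlled in $C^k(K)$ by the off-diagonal smoothness of $G$, so that $C_\epsilon=\bar u_\epsilon$ and part i) follows.

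Part ii) I would obtain directly from the exact representation, bypassing the boundary layer at $x^*$ altogether: integrating over $x\in M$ and using $\int_M G(x,z)\,d{\rm vol}_g(x)=F(z)$ for all $z\in\overline M$ (Green's identity again, with the symmetry of $G$ and $\int_{\partial M}G=0$),
\begin{multline*}
\int_M u_\epsilon\,d{\rm vol}_g=\int_M F\,d{\rm vol}_g+\bar u_\epsilon|M|+\int_{\Gamma_\epsilon}F(z)\,\phi_\epsilon(z)\,d{\rm vol}_h(z)\\
=\int_M F\,d{\rm vol}_g-F(x^*)|M|+C_\epsilon|M|+O(\epsilon),
\end{multline*}
using $F(z)-F(x^*)=O(\epsilon)$ on $\Gamma_\epsilon$ together with $\|\phi_\epsilon\|_{L^1(\Gamma_\epsilon)}=O(1)$ from part i).

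I expect the principal obstacle to be the rigorous perturbation theory of the integral equation on $\Gamma_\epsilon$: one must show that the full rescaled operator (the restriction of $G_{\partial M}$, with its $d_g^{-1}$, logarithmic and H\"older pieces) is, uniformly as $\epsilon\to0$, a small perturbation of $\frac1{2\pi\epsilon}\mathcal S$ in function spaces encoding the $(1-|\eta|^2)^{-1/2}$ behaviour near $\partial B_1$, so that $\psi_\epsilon$ genuinely admits the asymptotic expansion used above and the claimed error orders are obtained — the limited regularity of the regular part $R$ being the delicate point when tracking the error hierarchy. Closely tied to this is the explicit evaluation of the single- and double-layer integrals of $\sigma$ over $B_1$, in particular the logarithmic energy that produces the numerical constant $2\log2-\tfrac32$.
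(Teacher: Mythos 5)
Your proposal reproduces the paper's argument essentially verbatim: the mixed boundary value problem characterization of $\mathbb{E}[\tau_{\Gamma_\epsilon}\mid X_0=x]$ (Appendix A), the Green's-function integral representation, the blow-up to the unit disk, inversion of the normal operator $L$ via the equilibrium density $\pi^{-2}(1-|\eta|^2)^{-1/2}$ from Section~\ref{normal operator}, the logarithmic-energy computation (Lemmas~\ref{log_function}--\ref{integral of Rlog}), and the angular cancellation of the second-fundamental-form term (Lemma~\ref{integral of Rinfty}) are all the same ingredients, up to a normalization of $\psi_\epsilon$ by $\epsilon^2$. The technical core you flag as the principal obstacle --- uniform mapping bounds for the rescaled kernel and the H\"older regular part --- is exactly what the paper addresses in Lemma~\ref{uniform operator bound} and Lemma~\ref{R is constant}.
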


Theorem \ref{main theorem disk} does not realize the full power of Proposition \ref{prop G expansion} as it does not see the non-homogeneity of the local geometry at $x^*$ (only the mean curvature $H(x^*)$ shows up). This is due to the fact that we are looking at windows which are geodesic balls. If we replace geodesic balls with geodesic {\em ellipses}, we see that the second fundamental form term in \eqref{G expansion} contributes to a term in $\mathbb{E}[\tau_{\Gamma_{\epsilon, a}} | X_0 = x]$ which is the {\em difference} of principal curvatures. 

To this end let $E_1(x^*), E_2(x^*) \in T_{x^*}\partial M$ be the unit eigenvectors of the shape operator at $x^*$ corresponding respectively to the principal curvatures $\lambda_1(x^*),\ \lambda_2(x^*)$. For $1\geq a>0$ fixed, let $$\Gamma_{\epsilon, a} := \{\exp_{x^*;h}(\epsilon t_1 E_1(x^*) +\epsilon t_2 E_2(x^*)) \mid t_1^2 + a^{-2} t_2^2 \leq 1 \}$$ 
be a small geodesic ellipse.
\begin{Th}
	\label{main theorem}
	Let $(M,g,\partial M)$ be a smooth Riemannian manifold of dimension three with boundary. \\
	i) For each $x\in M\backslash \Gamma_{\epsilon,a}$, 
	$$ \mathbb{E}[\tau_{\Gamma_{\epsilon, a}} | X_0 = x] = F(x) + C_{\epsilon,a} -|M| G(x,x^*) + r_\epsilon (x)$$
	with $\|r_\epsilon\|_{C^k(K)} \leq C_{k,K} \epsilon$ for any integer $k$ and compact set $K\subset \overline M$ which does not contain $x^*$. The function $F$ is the unique solution to the boundary value problem
	$$\Delta_g F = -1,\ \ \partial_\nu F = -|M|/|\partial M|,\ \ \int_{\partial M} F = 0.$$
	The constant $C_{\epsilon,a}$ is given by
		\begin{align} 
	C_{\epsilon,a} =& \frac{|M| K_a}{4a\epsilon \pi^2} -\frac{1}{4\pi} H(x^*) |M| \log \epsilon  + a R(x^*, x^*) |M|- F(x^*)\\
	&\nonumber -\frac{|M|  H(x^*)}{16\pi^3 } \int_{\mathbb{D}} \frac{1}{ (1-|s'|^2)^{1/2}} \int_\D  \frac{\log\left((t_1 - s_1)^2 + a^2 (t_2-s_2)^2 \right)^{1/2}}{ (1-|t'|^2)^{1/2}} dt' ds'\\
	&\nonumber +\frac{|M|  (\lambda_1 - \lambda_2)}{64 \pi^3} \int_{\mathbb{D}} \frac{1}{ (1-|s'|^2)^{1/2}} \int_\D \frac{(t_1 - s_1)^2 - a^2 (t_2 - s_2)^2}{(t_1 - s_1)^2 + a^2 (t_2 - s_2)^2} \frac{1}{ (1-|t'|^2)^{1/2}} dt' ds'\\
	&\nonumber + O(\epsilon\log\epsilon),
	\end{align}
where $K_a = \frac{\pi}{2} \int_{0}^{2 \pi} {\left( \cos^2 \theta + \frac{\sin^2 \theta}{a^2} \right)^{-1/2}} d \theta$ and $\mathbb{D}$ is the two dimensional unit disk centered at the origin.

	ii) One has that the integral of $\mathbb{E}[\tau_{\Gamma_{\epsilon, a}} | X_0 = x]$ over $M$ satisfies
	$$\int_M \mathbb{E}[\tau_{\Gamma_{\epsilon, a}} | X_0 = x] {\rm dvol}_g (x)= \int_M F(x) {\rm dvol}_g+ C_{\epsilon,a} |M| - F(x^*) |M| + O(\epsilon).$$

\end{Th}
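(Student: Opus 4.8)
The plan is to convert the problem into a boundary integral equation for the flux of $u_\epsilon$ through the window and then solve that equation asymptotically as $\epsilon\to 0$; the only probabilistic input occurs at the outset. By standard diffusion theory (Dynkin's formula, together with the fact that reflected Brownian motion on the compact manifold $M$ reaches the open set $\Gamma_{\epsilon,a}$ in finite expected time), $u_\epsilon(x):=\mathbb{E}[\tau_{\Gamma_{\epsilon,a}}\mid X_0=x]$ is the solution — in $H^1(M)$, and smooth up to the boundary away from $\partial\Gamma_{\epsilon,a}$ — of the mixed problem $\Delta_g u_\epsilon=-1$ in $M$, $u_\epsilon|_{\Gamma_{\epsilon,a}}=0$, $\partial_\nu u_\epsilon|_{\partial M\setminus\Gamma_{\epsilon,a}}=0$. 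Green's second identity, combined with the normalisations defining $G$ and $F$, yields the two identities $F(x)=\int_M G(x,z)\,{\rm dvol}_g(z)$ for every $x\in\overline M$, and $G(x,z)=G(z,x)$; pairing $u_\epsilon$ with $G(x,\cdot)$ and using $\partial_{\nu_z}G(x,z)=-1/|\partial M|$ then gives
$$u_\epsilon(x)=F(x)+c_\epsilon+\int_{\Gamma_{\epsilon,a}}G(x,z)\,\phi_\epsilon(z)\,{\rm dvol}_h(z),\qquad \phi_\epsilon:=\partial_\nu u_\epsilon|_{\Gamma_{\epsilon,a}},\quad c_\epsilon:=\frac{1}{|\partial M|}\int_{\partial M}u_\epsilon\,{\rm dvol}_h ,$$
where $c_\epsilon$ is a constant. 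Integrating $\Delta_g u_\epsilon=-1$ over $M$ gives $\int_{\Gamma_{\epsilon,a}}\phi_\epsilon\,{\rm dvol}_h=-|M|$, and since $u_\epsilon\ge 0$ vanishes on $\Gamma_{\epsilon,a}$ we have $\phi_\epsilon\le 0$, hence $\int_{\Gamma_{\epsilon,a}}|\phi_\epsilon|\,{\rm dvol}_h=|M|$. Restricting the representation to $x\in\Gamma_{\epsilon,a}$, where $u_\epsilon=0$, and using Proposition~\ref{prop G expansion}(i) to give meaning to the boundary restriction of $G$, turns it into the integral equation $\int_{\Gamma_{\epsilon,a}}G_{\partial M}(x,z)\,\phi_\epsilon(z)\,{\rm dvol}_h(z)=-F(x)-c_\epsilon$ on $\Gamma_{\epsilon,a}$.

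The core is the asymptotic analysis of this equation. I would blow up the window by setting $z=\exp_{x^*;h}(\epsilon t_1E_1+\epsilon a\,t_2E_2)$ and $x=\exp_{x^*;h}(\epsilon s_1E_1+\epsilon a\,s_2E_2)$, which identifies $\Gamma_{\epsilon,a}$ with the unit disk $\mathbb{D}$ in the variables $t'=(t_1,t_2)$ and gives ${\rm dvol}_h=\epsilon^2 a\,(1+O(\epsilon))\,dt'$, and then insert the expansion \eqref{G expansion}. Writing $\rho(t',s')^2:=(t_1-s_1)^2+a^2(t_2-s_2)^2$, the expansion of the distance functions in normal coordinates gives $d_g(x,z)=\epsilon\rho(1+O(\epsilon^2))$ and $d_h(x,z)=\epsilon\rho(1+O(\epsilon^2))$, so $\tfrac{1}{2\pi}d_g(x,z)^{-1}=\tfrac{1}{2\pi}\epsilon^{-1}\rho^{-1}+O(\epsilon)$ while the remaining terms of \eqref{G expansion} are $O(\log\epsilon)$ or better. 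Substituting $\phi_\epsilon=\epsilon^{-2}\Phi_\epsilon$ with $\Phi_\epsilon=\Phi_0+\epsilon\Phi_1+\cdots$ and expanding $c_\epsilon=\epsilon^{-1}c_{-1}+(\log\epsilon)\,c_{\log}+c_0+\cdots$, the equation at leading order becomes $\mathcal N\Phi_0=-c_{-1}$ on $\mathbb{D}$, where $\mathcal N\Psi(s'):=\tfrac{a}{2\pi}\int_{\mathbb{D}}\rho(t',s')^{-1}\Psi(t')\,dt'$. Under the linear change $u=\mathrm{diag}(1,a)\,t'$ this is exactly the classical equilibrium (electrified conductor) problem for the elliptical plate $\{u_1^2+a^{-2}u_2^2\le 1\}$, whose charge density pulls back to a multiple of $(1-|t'|^2)^{-1/2}$. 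As established in Section~\ref{normal operator}, $\mathcal N$ is invertible on the appropriate weighted Sobolev spaces and $\mathcal N[(1-|t'|^2)^{-1/2}]$ is a constant on $\mathbb{D}$; hence $\Phi_0=\kappa\,(1-|t'|^2)^{-1/2}$ with $\kappa$ fixed by $\int_{\mathbb{D}}\Phi_0\,a\,dt'=-|M|$, and the associated value of the potential is the leading term $|M|K_a/(4a\epsilon\pi^2)$ of $c_\epsilon$, with $K_a$ the capacity constant of the plate (for $a=1$ this reduces to $|M|/4\epsilon$).

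Matching at orders $\log\epsilon$ and $1$ produces the remaining terms of $C_{\epsilon,a}:=c_\epsilon$. The inhomogeneities arise from the three subleading terms of \eqref{G expansion} acting on $\Phi_0$. The $-\tfrac{H(x^*)}{4\pi}\log d_h$ term, via $\log d_h=\log\epsilon+\log\rho+O(\epsilon^2)$, yields $-\tfrac14 H(x^*)|M|\log\epsilon$ (from $\int_{\mathbb{D}}(1-|t'|^2)^{-1/2}dt'=2\pi$) together with the double $\mathbb{D}$-integral of $\log\rho$ against the densities $(1-|s'|^2)^{-1/2}$ and $(1-|t'|^2)^{-1/2}$; the second-fundamental-form term — which on the unit direction from $s'$ to $t'$ is a rescaling of $(\lambda_1-\lambda_2)\frac{(t_1-s_1)^2-a^2(t_2-s_2)^2}{(t_1-s_1)^2+a^2(t_2-s_2)^2}$ — yields the double integral with factor $(\lambda_1-\lambda_2)$; the regular part, which on $\Gamma_{\epsilon,a}$ equals $R(x^*,x^*)+O(\epsilon)$, yields $a\,R(x^*,x^*)|M|$, the factor $a$ being the blow-up Jacobian; and $-F(x)=-F(x^*)+O(\epsilon)$ yields $-F(x^*)$. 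The constant parts of the order-$1$ equation are read off by pairing against $(1-|s'|^2)^{-1/2}$, using that $\mathcal N$ is self-adjoint with $\mathcal N[(1-|t'|^2)^{-1/2}]$ constant together with the normalisation of $\Phi_1$ dictated by $\int_{\Gamma_{\epsilon,a}}\phi_\epsilon\,{\rm dvol}_h=-|M|$; the total error is $O(\epsilon\log\epsilon)$. Part (i) now follows with $r_\epsilon(x):=\int_{\Gamma_{\epsilon,a}}G(x,z)\phi_\epsilon(z)\,{\rm dvol}_h(z)+|M|G(x,x^*)$: for $x$ in a compact $K\not\ni x^*$, the kernel $z\mapsto G(x,z)$ is smooth near $x^*$ uniformly in $x$, so Taylor expanding to first order in the blown-up variable and using $\int_{\Gamma_{\epsilon,a}}\phi_\epsilon\,{\rm dvol}_h=-|M|$ and $\int_{\Gamma_{\epsilon,a}}|\phi_\epsilon|\,{\rm dvol}_h=|M|$ gives $\|r_\epsilon\|_{C^k(K)}\le C_{k,K}\epsilon$. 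For part (ii), I would integrate the representation of the first step directly (not the pointwise expansion, which is not uniform near $x^*$): since $\int_M G(x,z)\,{\rm dvol}_g(x)=F(z)$,
$$\int_M u_\epsilon\,{\rm dvol}_g=\int_M F\,{\rm dvol}_g+c_\epsilon|M|+\int_{\Gamma_{\epsilon,a}}F(z)\,\phi_\epsilon(z)\,{\rm dvol}_h(z),$$
and $\int_{\Gamma_{\epsilon,a}}F\phi_\epsilon\,{\rm dvol}_h=F(x^*)\int_{\Gamma_{\epsilon,a}}\phi_\epsilon\,{\rm dvol}_h+O(\epsilon)=-|M|F(x^*)+O(\epsilon)$ by smoothness of $F$ near $x^*$, which is the claim.

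The main obstacle is the analysis of the normal operator $\mathcal N$ on $\mathbb{D}$ carried out in Section~\ref{normal operator}: establishing its invertibility on the correct weighted scale, recognising $(1-|t'|^2)^{-1/2}$ as the preimage of the constants and evaluating the associated capacity $K_a$ in terms of complete elliptic integrals, and then propagating the singularity expansion \eqref{G expansion} through the order-by-order matching while keeping every remainder uniform in $\epsilon$. The geometric content is supplied entirely by Proposition~\ref{prop G expansion}, and the probabilistic content only by the first step.
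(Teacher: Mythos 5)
Your proposal follows essentially the same route as the paper: characterize $u_\epsilon$ as the solution of the mixed BVP (the paper's Appendix A), pass to the boundary integral equation via Green's identity and the fact that $\int_M G(x,\cdot)=F$, restrict to $\Gamma_{\epsilon,a}$ and insert the expansion \eqref{G expansion}, blow up near $x^*$, and invert the normal operator $L_a$ with the compatibility condition $\int_{\Gamma_{\epsilon,a}}\partial_\nu u_\epsilon\,{\rm dvol}_h=-|M|$ supplying the normalisation. The main presentational difference is that you posit a formal ansatz $\Phi_\epsilon=\Phi_0+\epsilon\Phi_1+\cdots$ and $c_\epsilon=\epsilon^{-1}c_{-1}+(\log\epsilon)\,c_{\log}+c_0+\cdots$ and match order by order, whereas the paper makes no ansatz: it writes the full rescaled integral equation \eqref{integral eq 2}, hits it with $L_a^{-1}$, inverts by Neumann series using uniform-in-$\epsilon$ bounds on $H^{1/2}(\D)^*$ (Lemmas \ref{uniform operator bound}, \ref{Rinfty and Rlog bounded}, \ref{R is constant}), and iterates once to extract $C_{\epsilon,a}$ to $O(\epsilon\log\epsilon)$. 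The paper's route delivers the error estimate with no a posteriori justification of the expansion; yours needs the phrase ``keeping every remainder uniform in $\epsilon$'' to be unpacked into precisely those operator estimates, so at the level of detail you have given the two are the same argument. Your bound for $r_\epsilon$ — using $\phi_\epsilon\le 0$ to get $\int_{\Gamma_{\epsilon,a}}|\phi_\epsilon|\,{\rm dvol}_h=|M|$ and Taylor expanding $z\mapsto G(x,z)$ for $x\in K$ — is a cleaner alternative to the paper's, which instead splits $\psi_\epsilon$ into its explicit leading profile plus an $H^{1/2}(\D)^*$ remainder; both work. Part (ii) is handled identically (the paper only adds an approximation argument to justify the duality pairing when $\partial_\nu u_\epsilon\in H^{-1/2}$).

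One point in your outline does not withstand scrutiny: you attribute the factor $a$ in the $aR(x^*,x^*)|M|$ term to ``the blow-up Jacobian.'' That Jacobian ($a\epsilon^2$ in ${\rm dvol}_h$) is exactly what converts $\int_\D\Phi_0\,dt'=-|M|/a$ into the $a$-free flux $\int_{\Gamma_{\epsilon,a}}\phi_\epsilon\,{\rm dvol}_h=-|M|$, so by your own bookkeeping
$$\int_{\Gamma_{\epsilon,a}}R(x,z)\,\phi_\epsilon(z)\,{\rm dvol}_h(z)=R(x^*,x^*)\int_{\Gamma_{\epsilon,a}}\phi_\epsilon\,{\rm dvol}_h+O(\epsilon)=-R(x^*,x^*)|M|+O(\epsilon),$$
which contributes $+R(x^*,x^*)|M|$, not $+aR(x^*,x^*)|M|$, to $C_{\epsilon,a}$. (Tracing the paper's own step from \eqref{integral eq 2} to \eqref{integral eq 3}: $a\epsilon\,R(x^*,x^*)\int_\D\psi_\epsilon\,ds'= a\epsilon\,R(x^*,x^*)\bigl(-|M|/(a\epsilon^2)\bigr)=-R(x^*,x^*)|M|/\epsilon$, again with no residual $a$; this is consistent with the $a=1$ case in Theorem \ref{main theorem disk}.) So your ``Jacobian'' explanation is a post-hoc rationalisation of the stated formula rather than a derivation of it, and the honest output of your method is the $a$-free coefficient. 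You should not paper over that discrepancy: either the factor $a$ in the theorem statement is a misprint, or there is an additional source of it that your outline — and the paper's computation as written — does not exhibit.
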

Note that while the dependence on the eccentricity of the ellipse is hidden in the integrals, the dependence on the difference of the principal curvatures $(\lambda_1-\lambda_2)$ is easy to see in this formula. The integral which multiplies $(\lambda_1 - \lambda_2)$ turns out to vanish when $a=1$ which makes the above result consistent with Theorem \ref{main theorem disk}.

The fact that our result is valid on general Riemannian three manifolds allows for the incorporation of spatial heterogeneity such as anisotropic diffusion. In contrast to \cite{singer2008narrow}, the fact that we are able to obtain explicitly an expression for the $O(1)$ term in \eqref{O of 1 term} is due to the fact that in Proposition \ref{prop G expansion} we have the expansion of $G_{\partial M}(x,z)$ all the way to a remainder $R(x,y)$, which is H\"older continuous at the diagonal. We also appeal to some recent advances in integral geometry \cite{sharafutdinov, monard, pestovuhlmann, pestovuhlmann2, ilmavirta} to address the comment in \cite{ammari} on the difficulty of treating this problem in higher dimensions.

The strategy and organization of this paper will be as follows. In Section 2 we will give a brief overview of pseudodifferential operators and their associated Schwartz kernels. The machinery of pseudodifferential operators serve as a bridge between the geometric and analytic objects appearing in Proposition \ref{prop G expansion} and we will compute their coordinate expression. In Section 3 we will use the tools we developed in Section 2 to prove Propostion \ref{prop G expansion}. A singularity expansion for the Green's function such as Proposition \ref{prop G expansion} is the gateway for obtaining the asymptotic expansions of Theorems \ref{main theorem disk} and \ref{main theorem}. However, there is an additional hurdle of inverting an integral transform as mentioned in \cite{ammari}. Here we make use of some recent advancements in integral geometry and geometric rigidity \cite{pestovuhlmann, ilmavirta, monard} to overcome these difficulties. This approach is described in Section 4. Finally, in Section 5 we carry out the asymptotic calculation using the tools we have developed. The appendices characterizes the expected first arrival time $\mathbb{E}[\tau_{\Gamma_{\epsilon, a}} | X_0 = x]$ as the solution of an elliptic mixed boundary value problem. This is classical in the Euclidean case (see \cite{schussbook}) but we could not find a reference for the general case of a Riemannian manifold with boundary.

\end{section}

%%|%|
%
%
%
%
%
%
%
%
%
%%
%
%

%
%
%
%

\begin{section}{Overview of Pseudodifferential Operators}
\subsection{Basic Definitions}
We give some basic definitions and properties of pseudodifferential operators. For a comprehensive treatment we refer the reader to Chapt 7 of \cite{taylor2} or the book \cite{taylorpdo}. Readers who are already familiar with microlocal analysis can skip this section.

As usual, $C^\infty$ denotes the space of smooth functions. We use notation $C_c^\infty$ for compactly suported smooth functions and $D'$ for its dual. By $C^k$, we denote the space of $k$ time continuously differentiable functions. The spce of functions from $C^k$, whose $k$th derivatives are H\"{o}lder continuous with exponent $\mu\in (0,1]$, is denoted by $C^{k,\mu}$

Let $a(x,\xi)$ be a smooth function on $T^*\R^{n}$ and for all $l\in \R$ we say that $a \in S^l_1(T^*\R^n)$ (or simply $S^l_1$)  if for all multi-indices $\alpha,\beta$ there are constants $C_{\alpha,\beta}$ such that
\begin{eqnarray}
\label{symbols}
|D_\xi^\alpha D_x^\beta a(x,\xi)| \leq C_{\alpha,\beta} \langle \xi\rangle^{l - |\alpha|}
\end{eqnarray}
where $D_\xi^\alpha = (-i)^{|\alpha|}\partial_\xi^{\alpha}$, $D_x^\beta = (-i)^{|\beta|}\partial_x^{\beta}$ , and $\langle \xi\rangle := ( 1+ |\xi|^2)^{1/2}$. These are the Kohn-Nirenberg symbols. This class of symbols contain the classical symbols, denoted by $S_{cl}^{l}(T^*\R^n)$, which are defined by those $a(x,\xi)\in S^l_1(T^*\R^n)$ satisfying 
\begin{eqnarray}
\label{classical symb}
a(x,\xi) \sim \sum_{m = 0}^\infty a_{l-m}(x,\xi),
\end{eqnarray}
where each $a_{l-m}$ are homogeneous in the sense that $a_{l-m}(x, \tau \xi) = \tau^{l-m} a(x,\xi)$ for all $x\in\R^n$, $\tau>1$ and $|\xi|>1$. The expression \eqref{classical symb} means that for all $N$,
$$a(x,\xi) - \sum_{m = 0}^N a_{l-m}(x,\xi) \in S^{l-N-1}_1(T^*\R^n).$$
If $a(x,\xi)\in S^l_1$ we can define an operator $a(x,D) : C^\infty_c(\R^n) \to {\mathcal D}'(\R^n)$ by
\begin{eqnarray}
\label{integral rep}
a(x,D) u := \int_{\mathbb{R}^n} e^{i\xi\cdot x} a(x,\xi) \hat u(\xi) d\xi,
\end{eqnarray}
where $\hat u (\xi):= {\mathcal F} u := (2\pi)^{n} \int_{\mathbb{R}^n} e^{-ix\cdot\xi} u(x)dx$ is the Fourier transform. Recall that the absolutely convergent integral representation of the Fourier transform is well defined as an automorphism of the Schwartz class functions $S(\R^n)$ but extends to an automorphism of the tempered distributions $S'(\R^n)$. (See \cite{joshifriedlander} for a comprehensive guide to distribution theory and definition of these spaces).

Operators taking $C^\infty_c(\R^n) \to {\mathcal D}'(\R^n)$ which have the above representation are said to be in $\Psi^l_1(\R^n)$ and are called pseudodifferential operators. For the symbol class $S_1^l(T^*\R^n)$, Lemma 1.1 in Chapter 7.1 of \cite{taylor2} extends $a(x,D)$ to map $S'(\R^n)\to S'(\R^n)$.

 The classical pseudodifferential operators $\Psi^{l}_{cl}(\R^n)$ are defined analogously by requiring that $a(x,\xi)$ belongs to $S^l_{cl}$. Note that knowing the operator $a(x,D) \in \Psi^{l}_1(\R^n)$ we can recover $a(x,\xi)\in S^{l}_1$ by the formula
\begin{eqnarray}
\label{symbol rep}
a(x,\xi) = e_{-\xi}(x) a(x,D) e_\xi ,
\end{eqnarray}
where $e_\xi(x) := e^{i\xi\cdot x}$. Note that if $A(x,y)$ is the Schwartz kernel of the operator $a(x,D)$ then 
\begin{eqnarray}
\label{inverse transform}
a(0,\xi) = {\mathcal F}_{y}^{-1}(A(0,y))(\xi) = \int_{\mathbb{R}^n} e^{i\xi\cdot y} A(0,y) dy.
\end{eqnarray}

Let $X$ be a compact manifold without boundary. An operator $\A : C^\infty(X) \to {\mathcal D}'(X)$\footnote{We shall avoid a discussion of distributional sections of density bundles by noting that in our setting of $X$ is always prescribed with a Riemannian volume form which provides a natural trivialization of density bundles. As such all distributional sections of density bundles are identified with sections of the trivial line bundle in this way.} is said to be in $\Psi^{l}_1(X)$ if there exists coordinate covers $\{(O_j, \Phi_j)\mid \Phi_j : O_j \to U_j \subset \R^n\}$ and a partition of unity $\{\chi_j\}$ subordinate to $\{O_j\}$ such that the map
\begin{eqnarray}
\label{psido on mfld}
u\mapsto  \left (\chi_k \A \chi_j\Phi_j^* u \right)\circ \Phi_k^{-1}
\end{eqnarray}
from $C^\infty(U_j)\to {\mathcal E}'(U_k)$ belongs to $\Psi^l_1(\R^n)$. 

If $a\in C^\infty(T^*X)$ we say that it belongs to the symbol class $S^{l}_1(T^*X)$ if 
$$\chi_j\circ\Phi_j^{-1}a(\Phi_j^{-1}(\cdot), \Phi_j^*\cdot) \in S^l_1(T^*\R^n)$$
for all $j$. The classical pseudodifferential operators $\Psi^{l}_{cl}(X)$ and classical symbols $S^l_{cl}(T^*X)$ are defined analogously. These definitions depend a-priori on the choice of coordinate systems but turn out to be invariant (see Chapt 7 \cite{taylor2}).

There exists a linear isomorphism 
\begin{eqnarray}
\label{principal symbol map}
\sigma_l : \Psi^l_{cl}(X)/ \Psi^{l-1}_{cl}(X) \to S^{l}_{cl}(T^*X)/ S^{l-1}_{cl}(T^*X)
\end{eqnarray}
called the {\em principal symbol} map. For each $\A \in \Psi^l_{cl}(X)$ it can be defined at each $x\in X$ by taking a coordinate neighborhood $O$ containing $x$ and a $\chi \in C^\infty_c(O)$ which is identically $1$ near $x$ then considering the operator given in \eqref{psido on mfld} for $\chi_j = \chi_k = \chi$ and $\Phi_j = \Phi_k = \Phi$. As the resulting operator in \eqref{psido on mfld} is in $\Psi^l_{cl}(\R^n)$ with symbol $a\in S^l_{cl} (T^*\R^n)$, we may set
$$\sigma_l(\A)(x, \Phi^*\xi) := a_l(\Phi(x) , \xi)$$
for all $x\in X$ and $\xi \in T^*_{\Phi_j(x)}\R^n$. This definition depends a-priori on the choice of coordinate systems but turns out to be invariant (see Chapt 5 of \cite{taylorpdo}). In practice these computations are often done in normal coordinates centered at the point of interest $x\in X$ then computing the inverse Fourier transform as in \eqref{inverse transform}.

One important property of $\sigma_l$ which we will use is that it respects the product structure of $\Psi^l_{cl}$ and $S^l_{cl}$:
\begin{eqnarray}
\label{comp calculus}
\sigma_l(\A) \sigma_m({\mathcal B}) = \sigma_{l+m} (\A{\mathcal B})
\end{eqnarray}
for $\A\in \Psi^l_{cl}(X)$ and ${\mathcal B}\in \Psi^{m}_{cl}(X)$.

\subsection{Coordinate Calculations}

\label{diff geo}
We make some calculations for some geometric objects which will naturally appear in the singularity expansion for $G_{\partial M}$. These identities will be useful in proving Proposition \ref{prop G expansion}.

Let $(M,g,\partial M)$ be a three dimensional Riemannian manifold with non-empty smooth boundary which inherits the metric $h :=\iota_{\partial M}^*g$. Denote by $\II$ the scalar second fundamental form on the surface $\partial M$ and $H(x)$ be the mean curvature at $x\in \partial M$. Let $S\partial M$ denote the unit-sphere bundle over $\partial M$, 
$$S\partial M =\{v\in T\partial M\mid \|v\|_h =1\}.$$
For any $x\in \partial M$, let $E_1(x), E_2(x) \in S_{x} \partial M$ be principal directions (i.e. unit eigenvectors) of the induced shape operator with eigenvalues $\lambda_1(x)$ and $\lambda_2(x)$. We will drop the dependence in $x$ from our notation when there is no ambiguity.

We choose $E_1$ and $E_2$ such that $E_1^\flat\wedge E_2^\flat\wedge\nu^\flat$ is a positive multiple of the volume form $ {\rm dvol}_g$ (see p.26 of \cite{lee} for the ``musical isomorphism'' notation of $^\flat$ and $^\sharp$). Here we use $\nu$ to denote the {\em outward} pointing normal vector field so that it is consistent with most PDE literature. However, in defining $\II$ and the shape operator we will follow geometry literature (e.g.\cite{lee}) and use the {\em inward} pointing normal so that the sphere embedded in $\R^3$ would have positive mean curvature in our convention.

For two points $x,y\in \partial M$ there are two distances to consider. The first is the shortest path amongst those that stay on the boundary which we denote by $d_h(x,y)$ and the other is the distance measured by paths allowing to enter $M$, which we denote by $d_g(x,y)$. Clearly, $d_h(x,y) \geq d_g(x,y)$.

For a fixed $x_0\in \partial M$, we will denote by $B_h(\rho;x_0) \subset \partial M$ the geodesic disk of radius $\rho>0$ (with respect to the metric $h$) centered at $x_0$ and $\D_\rho$ to be the Euclidean disk in $\R^2$ of radius $\rho$ centered at the origin. In what follows $\rho$ will always be smaller than the injectivity radius of $(\partial M, h)$. Letting $t = (t_1,t_2,t_3) \in \R^3$, we will construct a coordinate system $x(t; x_0)$ by the following procedure:\\
Write $t\in \R^3$ near the origin as $t  = (t',t_3)$ for $t' = (t_1,t_2)\in \D_\rho$. Define first \footnote{for example it is obvious below that $E_1$ and $E_2$ are elements of the tangent space over $x_0$ as they are inserted into the argument of $\exp_{x_0;h}(\cdot)$.}
$$x((t',0); x_0) := {\rm {exp}}_{x_0;h} (t_1 E_1+ t_2 E_2),$$
where ${\rm{exp}}_{x_0;h} (V)$ denotes the time $1$ map of $h$-geodesics with initial point $x_0$ and initial velocity $V\in T_{x_0}\partial M$. The coordinate $t'\in \D_\rho \mapsto x((t',0); x_0)$ is then an $h$-geodesic coordinate system for a neighborhood of $x_0$ on the boundary surface $\partial M$. We can extend this to become a coordinate system for points in $M$ near $x_0$ so that $t \mapsto x(t;x_0)$ is a boundary normal coordinate system with $t_3 >0$ in $M$ as the boundary defining function. Readers wishing to know more about boundary normal coordinates can refer to \cite{leeuhlmann} for a brief recollection of the basic properties we use here and Prop 5.26 of \cite{lee} for a detailed construction.  

For convenience we will write $x(t'; x_0)$ in place of $x((t',0);x_0)$. The boundary coordinate system $ t\mapsto x(t;x_0)$ has the advantage that the metric tensor $g$ can be expressed as
\begin{eqnarray}
\label{metric bnc}
\sum_{j,k=1}^3 g_{j,k}(t) dt_j dt_k = \sum_{\alpha, \beta = 1}^2 h_{\alpha,\beta}(t',t_3) dt_\alpha dt_\beta + dt_3^2,
\end{eqnarray}
where $h_{\alpha,\beta}(t', 0) = h_{\alpha,\beta}(t')$ is the expression of the boundary metric $h$ in the $h$-geodesic coordinate system $x(t';x_0)$. Note that $(g_{j,k}(t))_{j,k=1}^3$ and $(h_{\alpha,\beta}(t',t_3))_{\alpha,\beta=1}^2$ are symmetric positive definite $3\times 3$ and $2\times 2$ matrices varying smoothly with respect to the variable $t = (t',t_3) = (t_1, t_2, t_3)$.

For $\epsilon >0$ sufficiently small we define the (rescaled) $h$-geodesic coordinate by the following map
$$ x^{\epsilon}(\cdot ; x_0) : t' = (t_1, t_2) \in \D \mapsto x(\epsilon t'; x_0) \in B_h(\epsilon;x_0), $$
where $ \mathbb{D}$ is the unit disk in $\R^2$.
We derive some coordinate expressions for some of the geometric objects we will consider later.

\begin{lemma}
\label{dist expression}
Let $h(s') =\sum_{\alpha, \beta=1}^2 h_{\alpha,\beta}(s') ds_{\alpha}ds_{\beta}$ be the pullback metric of $h$ by $s'\mapsto x(s';x_0)$ on $\D_\rho$. Denote by 
$$r = |s'-t'|_{h(s')} := \left(\sum_{\alpha, \beta = 1}^2 h_{\alpha,\beta}(s') (s_\alpha - t_\alpha)(s_\beta-t_\beta)\right)^{1/2}$$
 so that $t' = s' + r\omega$ where $\omega \in S_{s'}\D_\rho$. We have that 
$$d_h(x(s';x_0), x(t';x_0))^2 = r^2\sum\limits_{j,k = 1}^2 H_{j,k}(s',r,\omega) \omega_j \omega_k$$
for matrix $H_{j,k}(s',r, \omega)$ jointly smooth in $(s',r,\omega)$. 
 It also satisfies $H_{j,k}(s',0,\omega) = h_{j,k}(s')$ and $$\sum_{j,k}\partial_r H_{j,k} (s',0,\omega)\omega_k\omega_j= O(s').$$

\end{lemma}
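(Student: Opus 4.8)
The plan is to work entirely in the $h$-geodesic coordinate system $s'\mapsto x(s';x_0)$ on $\D_\rho$, in which the key input is that $s'$ are normal coordinates for the boundary metric $h$ at $x_0$. First I would recall the standard Taylor expansion of a Riemannian metric in geodesic normal coordinates centered at $x_0$: one has $h_{\alpha\beta}(s') = \delta_{\alpha\beta} + O(|s'|^2)$, and more importantly the radial-geodesic (Gauss lemma) property that the coordinate rays $u\mapsto u\,v$ (with $v$ a unit vector at $x_0$) are unit-speed $h$-geodesics emanating from $x_0$. This already gives $d_h(x_0, x(s';x_0)) = |s'|_{\mathrm{eucl}}$, but here we need the distance between two \emph{nearby} points $x(s';x_0)$ and $x(t';x_0)$, neither of which need be $x_0$, so I cannot use that directly.

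The main step is to set up the squared distance function $\rho^2_h(s',t') := d_h(x(s';x_0), x(t';x_0))^2$ and expand it for $t'$ near $s'$. I would write $t' = s' + r\omega$ with $r = |t'-s'|_{h(s')}$ and $\omega$ an $h(s')$-unit vector, and use the fact that $(p,q)\mapsto d_h(p,q)^2$ is smooth on a neighbourhood of the diagonal of $\partial M\times\partial M$ (standard, since it is the squared Riemannian distance away from cut locus) to pull it back to a smooth function of $(s',t')$, hence of $(s',r,\omega)$, jointly smooth. Factoring out $r^2$: since $\rho_h^2$ vanishes to exactly second order on $\{s'=t'\}$, Taylor's theorem with integral remainder in the $t'$ variable about $t'=s'$ gives $\rho_h^2(s', s'+r\omega) = r^2 \sum_{j,k} H_{jk}(s',r,\omega)\,\omega_j\omega_k$ with $H_{jk}$ jointly smooth; concretely $H_{jk}(s',r,\omega)$ is an average of second derivatives $\tfrac12\partial_{t_j}\partial_{t_k}\rho_h^2$ along the segment from $s'$ to $t'$. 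Evaluating at $r=0$: $\tfrac12\partial_{t_j}\partial_{t_k}\rho_h^2(s',t')\big|_{t'=s'}$ equals $h_{jk}(s')$ — this is the classical identity that the Hessian of the squared distance function at the base point is the metric. That yields $H_{jk}(s',0,\omega) = h_{jk}(s')$.

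For the last assertion, $\sum_{j,k}\partial_r H_{jk}(s',0,\omega)\omega_j\omega_k = O(s')$, the point is that $r\mapsto \sum_{jk}H_{jk}(s',r,\omega)\omega_j\omega_k = \rho_h^2(s',s'+r\omega)/r^2$ is, up to the $\omega$-normalization, the function governing the third-order Taylor coefficient of the squared distance along the geodesic-coordinate ray through $s'$ in direction $\omega$. When $s'=0$, the coordinate rays \emph{are} $h$-geodesics parametrized by arclength, so $\rho_h^2(0, r\omega) = r^2|\omega|_{h(0)}^2 = r^2$ exactly (using $h(0)$-unit $\omega$), giving $\partial_r(\cdot)|_{r=0} = 0$ at $s'=0$; the claimed estimate then follows by smoothness of $\partial_r H_{jk}$ in $s'$ and Taylor expansion of this derivative about $s'=0$. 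Equivalently, one can see it from the fact that the cubic term in the normal-coordinate expansion of $d_h^2$ is built from $\partial h_{\alpha\beta}(s')$ (equivalently Christoffel symbols), which vanish at $s'=0$ in normal coordinates, hence are $O(s')$.

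The step I expect to be the main obstacle is pinning down exactly which geometric quantity $\partial_r H_{jk}(s',0,\omega)$ is and checking it is $O(s')$ \emph{uniformly in $\omega$}: one must be careful that $r$ itself is defined using the frozen metric $h(s')$ rather than arclength along the true $h$-geodesic from $x(s';x_0)$ to $x(t';x_0)$, so the identification with the Taylor expansion along a geodesic is only approximate and introduces correction terms — but these corrections again involve first derivatives of $h$ at $s'$, which are $O(s')$, so the conclusion is unaffected. The cleanest route is probably to avoid geodesics entirely after the first two claims and simply observe that $G(s',r,\omega):=\sum_{jk}H_{jk}(s',r,\omega)\omega_j\omega_k$ is smooth with $G(0,0,\omega)\equiv 1$ and, by the exact identity $\rho_h^2(0,r\omega)=r^2$ for $h(0)$-unit $\omega$, that $\partial_r G(0,0,\omega) = 0$; then $\partial_r G(s',0,\omega) = \partial_r G(0,0,\omega) + O(s') = O(s')$ by the mean value theorem in $s'$, with the implied constant uniform over the compact set of unit directions $\omega$.
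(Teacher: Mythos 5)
Your proposal is correct and follows essentially the same route as the paper's: write $t'=s'+r\omega$, factor the squared-distance function as $r^2\sum_{j,k}H_{jk}(s',r,\omega)\omega_j\omega_k$, identify $H_{jk}(s',0,\omega)=h_{jk}(s')$ via the Hessian-of-$d_h^2$ identity, and then prove $\sum_{j,k}\partial_r H_{jk}(s',0,\omega)\omega_j\omega_k=O(s')$ by first showing it vanishes at $s'=0$ (using that in $h$-normal coordinates $d_h(x_0,x(t';x_0))=|t'|$ exactly) and appealing to smoothness in $s'$. The only difference is that the paper simply cites a reference (its \cite{tibault}, Lemma 4.8) for the initial factored form and for $H_{jk}(s',0,\omega)=h_{jk}(s')$, whereas you derive both in a self-contained way from Taylor's theorem with integral remainder plus the classical Hessian identity; this makes your write-up slightly more complete, and the worry you raise at the end (the use of the frozen metric $h(s')$ to define $r$ rather than Euclidean length) is harmless for exactly the reason you give, since at $s'=0$ the two coincide and the $O(s')$ error is swallowed by the smooth dependence in $s'$.
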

\begin{proof}
Expressing $t'$ using $(s',r,\omega)$ we have that (see e.g. \cite{tibault} Lemma 4.8)
$$d_h(x(s';x_0),x(t';x_0)) = r  \left(\sum_{\alpha,\beta} H_{\alpha,\beta}(s',r,\omega) \omega_\alpha \omega_\beta\right)^{1/2}$$
with $H_{\alpha,\beta}$ symmetric, even under the map $(r,\omega)\mapsto (-r,-\omega)$, and $H_{\alpha,\beta}(s',0,\omega) = h_{\alpha,\beta}(s')$. Setting $s' = 0$ and using the fact that we are using normal coordinates we obtain
$ r^2= r^2  \left(\sum_{\alpha,\beta} H_{\alpha,\beta}(0,r,\omega) \omega_\alpha \omega_\beta\right)$. Now Taylor expanding $H_{\alpha,\beta}(0,r,\omega)$ in $r$, we see that
$$r^2 = r^2\left( 1 +  \sum_{\alpha,\beta} \partial_rH_{\alpha,\beta}(0,0,\omega) r\omega_\alpha\omega_\beta + O(r^2) \right).$$
The $r^2$ terms on left-hand and right-hand sides cancel, leaving 
$$0  =  \partial_r H_{\alpha,\beta}(0,0,\omega) r^3 \omega_\alpha \omega_\beta+ O(r^4).$$
Divide through by $r^3$ and take the limit as $r\to 0$ we see that $\sum_{\alpha,\beta}\partial_r H_{\alpha,\beta}(0,0,\omega) \omega_\alpha \omega_\beta  = 0$ for any $\omega\in S_0\D$. 
\end{proof}

\begin{lemma}
\label{d and 1/d}
We use the same notation for $\omega$ and $r$ as in Lemma \ref{dist expression}. One has that
\begin{eqnarray*} d_h(x(s';x_0),x(t';x_0))^{-1} = r^{-1} + O(s') + O(r),
\end{eqnarray*}
where $O(r)$ (respectively $O(s')$) denotes smooth functions of $(s',r,\omega) \in \D_{\rho} \times \R \times S_{s'}\D_{\rho} $ which vanish to first order as $r\to 0$ (respectively $s'\to 0$).
\end{lemma}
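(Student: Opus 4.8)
The plan is to derive the expansion for $d_h^{-1}$ directly from the expression for $d_h^2$ established in Lemma \ref{dist expression}. Write $D := d_h(x(s';x_0),x(t';x_0))$. By Lemma \ref{dist expression} we have $D^2 = r^2 \Phi(s',r,\omega)$ where $\Phi(s',r,\omega) := \sum_{j,k} H_{j,k}(s',r,\omega)\omega_j\omega_k$ is jointly smooth in its arguments, with $\Phi(s',0,\omega) = \sum_{j,k} h_{j,k}(s')\omega_j\omega_k$. Since we are working in $h$-geodesic (normal) coordinates centered at $x_0$, we have $h_{j,k}(0) = \delta_{j,k}$, and since $\omega \in S_{s'}\D_\rho$ is a unit vector with respect to $h(s')$, we have $\sum_{j,k} h_{j,k}(s')\omega_j\omega_k = 1$ identically; so $\Phi(s',0,\omega) = 1$. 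Therefore $\Phi(s',r,\omega) = 1 + O(s') + O(r)$ in the notation of the statement, and in particular $\Phi$ is bounded away from $0$ on a neighbourhood of the origin, so $\Phi^{-1/2}$ is also smooth there.

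Next I would simply invert: $D^{-1} = r^{-1}\Phi(s',r,\omega)^{-1/2}$. It remains to show $\Phi^{-1/2} = 1 + O(s') + O(r)$. Since $\Phi$ is smooth, equals $1$ at $(s',r) = (0,0)$ for every $\omega$, and the map $u \mapsto u^{-1/2}$ is smooth near $u = 1$, the composition $\Phi^{-1/2}$ is smooth near the origin and takes the value $1$ at $r = 0, s' = 0$. A first-order Taylor expansion in the variables $(s', r)$ (with $\omega$ as a smooth parameter ranging over the compact fibre $S_{s'}\D_\rho$) then gives $\Phi(s',r,\omega)^{-1/2} = 1 + \sum_\alpha a_\alpha(\omega) s_\alpha + b(\omega) r + (\text{higher order})$, i.e. $\Phi^{-1/2} = 1 + O(s') + O(r)$ where the error terms are smooth functions of $(s',r,\omega)$ vanishing to first order as $s' \to 0$ (resp. $r \to 0$). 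Multiplying by $r^{-1}$ yields
$$D^{-1} = r^{-1}\big(1 + O(s') + O(r)\big) = r^{-1} + r^{-1}O(s') + O(1).$$
One has to be slightly careful here: $r^{-1}O(s')$ is not literally of the form $O(s')$ because of the $r^{-1}$ factor, and $O(1)$ is not $O(r)$. The cleaner route is to split the first-order Taylor remainder of $\Phi^{-1/2}$ as $\big(\Phi^{-1/2} - 1\big) = \big(\Phi^{-1/2}\big|_{r=0} - 1\big) + \big(\Phi^{-1/2} - \Phi^{-1/2}\big|_{r=0}\big)$; the first bracket is smooth and vanishes at $s'=0$, hence is $O(s')$ and moreover $r$-independent; the second bracket is smooth and vanishes at $r = 0$, hence is $O(r)$. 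Thus $r^{-1}(\Phi^{-1/2}-1) = r^{-1}\cdot O(s')_{\text{($r$-indep.)}} + r^{-1}\cdot O(r)$; the second term is $O(1)$, not obviously vanishing — so I should instead use that $\Phi^{-1/2}\big|_{r=0} - 1$ vanishes at $s' = 0$ to first order AND is independent of $r$, and recall from Lemma \ref{dist expression} that $\sum_{j,k}\partial_r H_{j,k}(s',0,\omega)\omega_j\omega_k = O(s')$, which forces $\partial_r\Phi(s',0,\omega) = O(s')$, so that $\Phi^{-1/2} - \Phi^{-1/2}\big|_{r=0}$ actually vanishes to second order in $(s',r)$ jointly near $r = 0$ when $s' \neq 0$...

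Let me restate the clean argument. The point is exactly that $\partial_r \Phi(s',0,\omega) = O(s')$ by Lemma \ref{dist expression}. Hence $\Phi(s',r,\omega) = \Phi(s',0,\omega) + r\,\partial_r\Phi(s',0,\omega) + O(r^2) = \psi(s') + r\,O(s') + O(r^2)$, where $\psi(s') := \Phi(s',0,\omega)$ satisfies $\psi(0) = 1$. Then $\Phi^{-1/2} = \psi(s')^{-1/2}\big(1 + r\,O(s') + O(r^2)\big)^{-1/2} = \psi(s')^{-1/2}\big(1 + r\,O(s') + O(r^2)\big)$, so
$$r^{-1}\Phi^{-1/2} = r^{-1}\psi(s')^{-1/2} + \psi(s')^{-1/2}O(s') + \psi(s')^{-1/2}O(r).$$
Now $\psi(s')^{-1/2} = 1 + O(s')$ smoothly, so $r^{-1}\psi(s')^{-1/2} = r^{-1} + r^{-1}O(s')$. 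That still has a stray $r^{-1}O(s')$, which means the statement as literally parsed cannot be right unless one interprets the $O(s')$ in the conclusion as allowed to blow up like $r^{-1}$... The main obstacle — and the thing to pin down against the authors' proof — is precisely the bookkeeping of how the $s'$-dependence and the $r^{-1}$ singularity interact; I expect the resolution is that in the intended reading the $O(s')$ term in the conclusion is a smooth function of $(s',r,\omega)$ on $\D_\rho \times \R \times S\D_\rho$ vanishing at $s' = 0$ with the singular $r^{-1}$ factored out already, i.e. $d_h^{-1} - r^{-1}$ is genuinely $r^{-1}(\Phi^{-1/2}-1)$ and one simply names $r^{-1}(\psi(s')^{-1/2}-1) =: O(s')/r$-type... so the honest final form is $d_h^{-1} = r^{-1}\big(1 + O(s') + O(r)\big)$, and the displayed line in the statement is shorthand for this. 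I would therefore prove it in the factored form $d_h^{-1} = r^{-1}(1 + O(s') + O(r))$ and remark that this is what \eqref{G expansion}'s later manipulations actually use.
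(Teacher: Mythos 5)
You actually identify the key fact early on — that
\[
\Phi(s',0,\omega) \;=\; \sum_{j,k} h_{j,k}(s')\,\omega_j\omega_k \;=\; 1
\]
\emph{identically in $s'$}, because $\omega\in S_{s'}\D_\rho$ is a unit vector for the metric $h(s')$ — but then contradict yourself in your ``clean argument.'' There you set $\psi(s'):=\Phi(s',0,\omega)$ and treat it as a genuine function of $s'$ satisfying only $\psi(0)=1$, when in fact $\psi\equiv 1$. That is the sole source of the phantom $r^{-1}O(s')$ term and of your (incorrect) conclusion that the lemma as stated ``cannot be right.'' It is right as stated.

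Run your own computation with $\psi\equiv 1$: you have $\Phi(s',r,\omega) = 1 + r\,\partial_r\Phi(s',0,\omega) + O(r^2)$, and $\partial_r\Phi(s',0,\omega) = \sum_{j,k}\partial_r H_{j,k}(s',0,\omega)\omega_j\omega_k = O(s')$ by Lemma \ref{dist expression}. Hence $\Phi = 1 + r\cdot O(s') + O(r^2)$, so $\Phi^{-1/2} = 1 + r\cdot O(s') + O(r^2)$, and multiplying by $r^{-1}$ yields exactly $r^{-1} + O(s') + O(r)$ with the error terms smooth in $(s',r,\omega)$, as claimed. This is precisely the paper's argument: it uses the normalization $\sum_{\alpha,\beta} h_{\alpha,\beta}(s')\omega_\alpha\omega_\beta = 1$ to remove \emph{all} $s'$-dependence from the zeroth-order-in-$r$ coefficient (not merely at $s'=0$), and then uses the $\partial_r H(s',0,\omega)\omega\omega = O(s')$ fact to control the first-order-in-$r$ coefficient. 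There is no need to retreat to the weaker factored form $r^{-1}\bigl(1 + O(s') + O(r)\bigr)$; the unfactored expansion in the lemma holds verbatim, and your first misstep — weakening $\Phi(s',0,\omega)=1$ to $\Phi(s',r,\omega) = 1 + O(s') + O(r)$ — discards exactly the information needed to prove it.
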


\begin{proof}
From Lemma \ref{dist expression} we have that
$$d_h(x(s';x_0),x(t';x_0))^{-1} = r^{-1}  \left(\sum_{\alpha,\beta} h_{\alpha,\beta}(s') \omega_\alpha \omega_\beta + r \partial_r H_{\alpha,\beta}(s',0,\omega) \omega_\alpha\omega_\beta + O(r^2)\right)^{-1/2}.$$
Using the fact that $\omega\in S_{s'}\D$ with respect to the metric given by $h_{\alpha,\beta}$ we have that 
$$d_h(x(s';x_0),x(t';x_0))^{-1} = r^{-1}  \left(\sum_{\alpha,\beta}1+ r \partial_r H_{\alpha,\beta}(s',0,\omega) \omega_\alpha\omega_\beta + O(r^2)\right)^{-1/2}.$$
For $r$ and $s'$ sufficiently small we may use Taylor's expansion to obtain the desired property. The fact that $\sum_{\alpha,\beta}\partial_r H_{\alpha,\beta}(s',0,\omega) \omega_\alpha\omega_\beta= O(s')$ is stated in Lemma \ref{dist expression}
\end{proof}
\begin{cor}
\label{rescaled boundary metric}
For $\epsilon>0$ sufficiently small we have that
\begin{eqnarray*} d_h(x^\epsilon(s';x_0),x^\epsilon(t';x_0))^{-1} = \epsilon^{-1}r^{-1} + \epsilon r^{-1} A(\epsilon, s', r,\omega)
\end{eqnarray*}
for some smooth function $A$ in the variables $(\epsilon, s',r,\omega) \in [0,\epsilon_0]\times \D\times \R \times S^1$. Here we use $r = |s'-t'|$ and $t' = s+ r\omega$.
\end{cor}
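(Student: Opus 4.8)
The plan is to deduce Corollary \ref{rescaled boundary metric} from Lemma \ref{d and 1/d} by a rescaling argument, keeping careful track of the powers of $\epsilon$.

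First I would substitute $s'\mapsto\epsilon s'$ and $t'\mapsto\epsilon t'$ in Lemma \ref{d and 1/d}. For $\epsilon\le\epsilon_0$ small the points $\epsilon s',\epsilon t'$ lie in $\D_\rho$, so the lemma applies, and since $d_h(x(\epsilon s';x_0),x(\epsilon t';x_0))=d_h(x^\epsilon(s';x_0),x^\epsilon(t';x_0))$ by definition of $x^\epsilon$, it yields
$$ d_h(x^\epsilon(s';x_0),x^\epsilon(t';x_0))^{-1}=R^{-1}+P(\epsilon s',R,\Omega)+W(\epsilon s',R,\Omega), $$
where $R=|\epsilon s'-\epsilon t'|_{h(\epsilon s')}$, $\epsilon t'=\epsilon s'+R\,\Omega$ with $\Omega\in S_{\epsilon s'}\D_\rho$, and $P,W$ are smooth with $P$ vanishing to first order in its first argument and $W$ vanishing to first order in its second argument.

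Next I would translate the intrinsic data $(R,\Omega)$ into the Euclidean data $r=|s'-t'|$, $t'=s'+r\omega$, $\omega\in S^1$ used in the statement. Because $x(\,\cdot\,;x_0)$ is an $h$-geodesic normal coordinate system centred at $x_0$, we have $h_{\alpha\beta}(y)=\delta_{\alpha\beta}+O(|y|^2)$ with smooth remainder, hence $R=\epsilon\,r\,|\omega|_{h(\epsilon s')}=\epsilon r\bigl(1+\epsilon^2\tilde E(\epsilon,s',\omega)\bigr)$ and $\Omega=\omega/|\omega|_{h(\epsilon s')}$, both smooth in $(\epsilon,s',\omega)$ up to $\epsilon=0$ and, importantly, with $R/\epsilon$ linear in $r$ and $\Omega$ independent of $r$. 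Expanding for small $\epsilon$ then gives $R^{-1}=\epsilon^{-1}r^{-1}\bigl(1+\epsilon^2\tilde E\bigr)^{-1}=\epsilon^{-1}r^{-1}+\epsilon r^{-1}\hat E(\epsilon,s',\omega)$ with $\hat E$ smooth.

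Finally I would dispose of the two error terms using Hadamard's lemma: $P(\epsilon s',R,\Omega)=\epsilon\sum_\alpha s_\alpha\,Q_\alpha(\epsilon s',R,\Omega)$ and $W(\epsilon s',R,\Omega)=R\,V(\epsilon s',R,\Omega)=\epsilon r(1+\epsilon^2\tilde E)\,V(\epsilon s',R,\Omega)$ with $Q_\alpha,V$ smooth. Substituting the formulas for $R,\Omega$ from the previous step, both $P$ and $W$ become $\epsilon r^{-1}$ times a function smooth in $(\epsilon,s',r,\omega)\in[0,\epsilon_0]\times\D\times\R\times S^1$; taking $A$ to be the sum of $\hat E$ and these two contributions gives the claim. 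The only step demanding care — the main obstacle — is this last bookkeeping: one must check that the substitutions $R=R(\epsilon,s',r,\omega)$, $\Omega=\Omega(\epsilon,s',\omega)$ keep all remainder functions jointly smooth up to $\{\epsilon=0\}$, and that extracting the factor $r$ to form $r^{-1}$ never amounts to dividing by $\epsilon$; both follow from $R/\epsilon$ being $r$ times a smooth nonvanishing factor and $\Omega$ being $r$-independent. (As in Lemma \ref{dist expression}, negative values of $r$ are accommodated by the evenness of these expressions under $(r,\omega)\mapsto(-r,-\omega)$.)
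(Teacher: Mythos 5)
Your argument is correct and matches the route the paper intends (it gives no explicit proof, simply noting that the corollary follows from Lemmas \ref{dist expression} and \ref{d and 1/d} by rescaling). The one place that genuinely requires care — translating the intrinsic $h$-data $(R,\Omega)$ of Lemma \ref{d and 1/d} into the Euclidean $(r,\omega)$ of the corollary, and verifying that the resulting remainder is $\epsilon r^{-1}$ times a function jointly smooth up to $\epsilon=0$ without any hidden division by $\epsilon$ — you handle correctly via $R=\epsilon r\bigl(1+\epsilon^2\tilde E\bigr)$, the $r$-independence of $\Omega$, and Hadamard's lemma.
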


\begin{lemma}
\label{eucl norm}
Let $x(\cdot; x_0)$ be the coordinate system at the beginning of this section centered at $x_0$. For $s', t' \in \R^2$ sufficiently small, we have that
$$d_g(x(s';x_0), x(t';x_0))^2 = r^2 \left(1 +  r{\bf \tilde G}(s',\omega)   + O(r^2)\right),$$
where $r = | s'-t'|_{h(s')}$ and $t' = s' + r\omega$. Here ${\bf \tilde G}(s',\omega)$ is a smooth function of $(s',\omega)$ which vanishes at $s'  =0$. The $O(r^2)$ term is a smooth function in $(s',r,\omega)\in \D_{\rho} \times \R \times S_{s'} \D_{\rho}$ which vanishes to second order as $r\to 0$.
\end{lemma}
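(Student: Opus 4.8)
The strategy parallels the proof of Lemma \ref{dist expression}, but now we must track the $g$-distance (paths allowed into $M$) rather than the boundary $h$-distance, and we need one more order in the Taylor expansion in $r$. First I would exploit the boundary normal coordinate form of the metric \eqref{metric bnc}: since the points $x(s';x_0)$ and $x(t';x_0)$ both lie on $\partial M$ (i.e.\ $t_3 = 0$), the $g$-geodesic joining them dips slightly into $t_3 > 0$, and its length is governed by the family of metrics $h_{\alpha,\beta}(\cdot, t_3)$ together with the flat $dt_3^2$ direction. The general fact (the same reference \cite{tibault} Lemma 4.8, applied to the $3$-manifold $(M,g)$, or a direct Gauss-lemma computation in $g$-normal coordinates centered at $x(s';x_0)$) gives an expression
$$d_g(x(s';x_0),x(t';x_0))^2 = r^2 \sum_{\alpha,\beta=1}^2 \tilde H_{\alpha,\beta}(s',r,\omega)\,\omega_\alpha\omega_\beta,$$
with $\tilde H$ jointly smooth, even under $(r,\omega)\mapsto(-r,-\omega)$, and $\tilde H_{\alpha,\beta}(s',0,\omega) = h_{\alpha,\beta}(s')$ (the leading term is insensitive to whether we stay on the boundary or not). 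Taylor expanding in $r$ and using $\sum h_{\alpha,\beta}(s')\omega_\alpha\omega_\beta = 1$ for $\omega\in S_{s'}\D$, one gets
$$d_g(x(s';x_0),x(t';x_0))^2 = r^2\left(1 + r\,\tilde{\bf G}(s',\omega) + O(r^2)\right),\qquad \tilde{\bf G}(s',\omega) := \sum_{\alpha,\beta}\partial_r\tilde H_{\alpha,\beta}(s',0,\omega)\,\omega_\alpha\omega_\beta.$$

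The remaining point is that $\tilde{\bf G}(0,\omega) = 0$. This follows exactly as in Lemma \ref{dist expression}: set $s' = 0$, where the coordinates $x(\cdot;x_0)$ are $g$-boundary-normal coordinates (hence $h$-geodesic normal on the boundary and, crucially, $g$-geodesic normal in the sense that radial lines in $t'$ from the origin are $h$-geodesics and the metric is Euclidean to first order at the origin). Then $d_g(x_0, x(r\omega;x_0))^2$ must equal $r^2$ up to $O(r^3)$ corrections coming from curvature, so matching the $r^3$ coefficient forces $\sum_{\alpha,\beta}\partial_r\tilde H_{\alpha,\beta}(0,0,\omega)\,\omega_\alpha\omega_\beta = 0$. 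Dividing by $r^3$ and letting $r\to 0$ closes the argument. Joint smoothness of $\tilde{\bf G}$ and of the $O(r^2)$ remainder (vanishing to second order in $r$) is inherited from the joint smoothness of $\tilde H$ and the fact that the subtraction of the $1 + r\tilde{\bf G}$ terms removes precisely the orders $r^0$ and $r^1$.

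The main obstacle, and the step deserving the most care, is justifying the expansion $d_g(\cdot,\cdot)^2 = r^2\sum \tilde H_{\alpha,\beta}\omega_\alpha\omega_\beta$ with joint smoothness in $(s',r,\omega)$ and the stated evenness and initial-value properties: unlike the boundary case in Lemma \ref{dist expression}, here the minimizing $g$-geodesic genuinely leaves the coordinate hypersurface $\{t_3=0\}$, so one either invokes the cited general lemma on the squared distance function of a smooth metric near the diagonal (which is smooth off the diagonal and whose homogeneous-degree-two blow-up in geodesic polar coordinates is smooth), or works in $g$-normal coordinates at $x(s';x_0)$ and re-expresses everything back in the $t'$-coordinates, checking that the change of coordinates is smooth in $s'$. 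Once that structural statement is in hand, the rest is the same short matching-of-Taylor-coefficients computation used in Lemma \ref{dist expression}.
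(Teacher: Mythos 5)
Your outline tracks the paper's proof in its overall architecture — expand $d_g^2$ in the $(s',r,\omega)$ variables and argue that the cubic-in-$r$ coefficient vanishes at $s'=0$ — but the justification you give for the vanishing has a genuine gap. You assert that at $s'=0$ the coordinates are such that ``the metric is Euclidean to first order at the origin,'' and from this conclude $d_g(x_0, x(r\omega;x_0))^2 = r^2 + O(r^4)$. That flatness claim is false: in boundary normal coordinates combined with $h$-geodesic normal coordinates on $\partial M$, the tangential first derivatives $\partial_\gamma g_{\alpha\beta}(0)$ do vanish, but $\partial_3 g_{\alpha\beta}(0) = -2\,\II_{\alpha\beta}(x_0)$ is generically nonzero — this is precisely the second fundamental form that the entire paper is tracking. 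So one cannot appeal to a generic ``flat to first order $\Rightarrow$ no cubic term'' statement. The remark ``this follows exactly as in Lemma \ref{dist expression}'' is also misleading: there the argument works because $d_h(x_0, x(t';x_0)) = |t'|$ \emph{exactly} in $h$-normal coordinates, killing \emph{every} higher coefficient, whereas here $d_g(x_0, x(t';x_0)) \leq |t'|$ with strict inequality in general, so the same reasoning does not transfer.

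The paper closes this step with a different, one-sided argument: from $d_g \leq d_h = r$ at $s'=0$ the cubic coefficient satisfies $\sum_{j,k}\partial_r {\bf G}^0_{j,k}(0,0,\omega)\,\omega_j\omega_k \leq 0$ for all $\omega$, and since this quantity is odd in $\omega$ (a consequence of the trivial evenness ${\bf G}^0(s',-r,-\omega)={\bf G}^0(s',r,\omega)$), it must vanish identically. Your direct route can be salvaged, but it requires a finer observation than first-order flatness: the cubic Taylor coefficient of $d_g(x_0,\cdot)^2$ evaluated at boundary points $(t',0)$ equals $\Gamma^\alpha_{\beta\gamma}(0)\,t^\alpha t^\beta t^\gamma$ with \emph{all} indices tangential (the normal excursion of the geodesic, governed by $\Gamma^3_{\alpha\beta}(0) = \II_{\alpha\beta}(x_0)$, first contributes at quartic order), and these purely tangential Christoffel symbols vanish in $h$-normal coordinates. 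Without either the paper's inequality-plus-oddness trick or this explicit index bookkeeping, the key step of your proof does not close.
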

\begin{proof}
We begin with the identity that for any $s$ and $t$,
$$ d_g(x(s;x_0), x(t;x_0))^2 = \sum_{j,k = 1}^3 {\bf G}_{j,k}(s,t)(s_j-t_j) (s_k-t_k),$$
with ${\bf G}_{j,k}(s,s) = g_{j,k}(s)$  given by \eqref{metric bnc}. Now set $s = (s',0)$ and $t = (t',0)$ we have
$$d_g(x(s';x_0), x(t';x_0))^2 = r^2 \left( 1 + \sum_{j,k= 1}^2\partial_r {\bf G}^0_{j,k} (s',0,\omega) r \omega_j \omega_k + O(r^2)\right)$$
where ${\bf G}^0_{j,k}(s',r,\omega) := {\bf G}_{j,k}(s', s' + r\omega),$ is even in $(r,\omega) \mapsto -(r,\omega)$ and $O(r^2)$ is a smooth function of $(s',r,\omega)$ which is even and vanishes to second order as $r\to 0$. Observe that $\omega\mapsto \partial_r {\bf G}^0_{j,k}(s',0,\omega)$ is odd in $\omega$. 

We now need to argue that $\sum_{\alpha,\beta} \partial_r {\bf G}^0_{j,k}(0,0,\omega) \omega_\alpha\omega_\beta = 0$. Setting $s' =0$ in the above identity and using the fact that we are using boundary normal coordinates we have
$$ r^2 =  d_h(x(0;x_0), x(t';x_0))^2 \geq  d_g(x(0;x_0), x(t';x_0))^2 =  r^2 \left( 1 +\sum_{j,k=1}^2 \partial_r {\bf G}^0_{j,k} (0,0,\omega) r \omega_j \omega_k + O(r^2)\right).$$
Subtracting off the $r^2$ terms and dividing by $r^3$ we see that as $r\to 0$, 
$$\sum_{j,k=1}^2 \partial_r {\bf G}^0_{j,k} (0,0,\omega)  \omega_j \omega_k \leq 0$$
for all $\omega \in S^1$. We now use the fact that $\partial_r {\bf G}_{j,k}(0,0,\omega)$ is odd to see that 
$$\sum_{j,k=1}^2 \partial_r {\bf G}^0_{j,k} (0,0,\omega)  \omega_j \omega_k= 0.$$
\end{proof}
Just as how Lemma \ref{d and 1/d} and Corollary \ref{rescaled boundary metric} followed from Lemma \ref{dist expression}, we have the following:
\begin{cor}
\label{1/dg}
Using the expression $r = |s'-t'|_{h(s')}$ and $t' = s' + r \omega$, we have that
\begin{eqnarray*} d_g(x(s';x_0),x(t';x_0))^{-1} = r^{-1} + O(s') + O(r) ,
\end{eqnarray*}
where $O(r)$ (respectively $O(s')$) denotes smooth function of $(s',r,\omega) \in \D_{\rho} \times \R \times S_{s'}\D_{\rho} $  
which vanishes to first order as $r\to 0$ (respectively $s'\to 0$).
\end{cor}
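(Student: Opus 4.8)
\emph{Proof proposal.} The plan is to mimic exactly the argument used for Lemma \ref{d and 1/d}, simply feeding in Lemma \ref{eucl norm} instead of Lemma \ref{dist expression}. First I would take the conclusion of Lemma \ref{eucl norm},
$$d_g(x(s';x_0),x(t';x_0))^2 = r^2\left(1 + r\,{\bf \tilde G}(s',\omega) + \psi(s',r,\omega)\right),$$
where $\psi$ is the $O(r^2)$ remainder, i.e.\ a smooth function of $(s',r,\omega)\in\D_\rho\times\R\times S_{s'}\D_\rho$ vanishing to second order as $r\to 0$, and ${\bf \tilde G}$ is smooth and vanishes at $s'=0$. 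Factoring out $r^2$ and taking the inverse square root gives
$$d_g(x(s';x_0),x(t';x_0))^{-1} = r^{-1}\left(1 + r\,{\bf \tilde G}(s',\omega) + \psi(s',r,\omega)\right)^{-1/2},$$
and for $(s',r)$ in a sufficiently small neighbourhood of the origin the bracketed quantity stays near $1$, so the Taylor expansion of $u\mapsto(1+u)^{-1/2}$ is legitimate.

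Second, I would expand: with $u := r\,{\bf \tilde G}(s',\omega) + \psi$ one has $(1+u)^{-1/2} = 1 - \tfrac12 u + u^2 q(u)$ for some smooth $q$, hence
$$d_g^{-1} = r^{-1} - \tfrac12 {\bf \tilde G}(s',\omega) - \tfrac12 r^{-1}\psi + r^{-1} u^2 q(u).$$
Since $\psi$ vanishes to second order as $r\to 0$, Hadamard's lemma writes $\psi = r^2\tilde\psi$ with $\tilde\psi$ smooth, so $r^{-1}\psi = r\tilde\psi$ vanishes to first order as $r\to 0$; likewise $u^2 = O(r^2)$, so $r^{-1}u^2 q(u)$ vanishes to first order in $r$. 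Together these constitute the claimed $O(r)$ term. Finally, by Lemma \ref{eucl norm} the smooth function $-\tfrac12{\bf \tilde G}(s',\omega)$ vanishes at $s'=0$, which is exactly the claimed $O(s')$ term. Collecting everything yields $d_g(x(s';x_0),x(t';x_0))^{-1} = r^{-1} + O(s') + O(r)$, and the argument runs in parallel with the deductions of Lemma \ref{d and 1/d} and Corollary \ref{rescaled boundary metric} from Lemma \ref{dist expression}.

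I do not expect a substantive obstacle: every step is a routine Taylor expansion combined with Hadamard's lemma. The only point requiring a little care is the bookkeeping of the two distinct notions of vanishing, $O(r)$ versus $O(s')$ — in particular one should check that the cross term $r^{-1}u^2 q(u)$, which a priori mixes an $r$-vanishing factor with an $s'$-vanishing factor, is safely absorbed into the $O(r)$ bucket and does not force the introduction of a separate $O(rs')$ class. Since $u^2$ already carries a full factor of $r^2$, this is immediate, and the proof concludes.
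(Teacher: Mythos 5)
Your proposal is correct and is exactly what the paper intends: the paper itself states only that Corollary~\ref{1/dg} follows from Lemma~\ref{eucl norm} ``just as'' Lemma~\ref{d and 1/d} and Corollary~\ref{rescaled boundary metric} follow from Lemma~\ref{dist expression}, and your Taylor-expansion argument with the Hadamard factorization $\psi = r^2\tilde\psi$ is precisely the deduction left implicit there.
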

\begin{cor}
\label{rescaled dist kernel}
For $\epsilon>0$ sufficiently small we have that
\begin{eqnarray*} d_g(x^\epsilon(s';x_0),x^\epsilon(t';x_0))^{-1} = \epsilon^{-1}r^{-1} + \epsilon r^{-1} A(\epsilon, s', r,\omega)
\end{eqnarray*}
for some smooth function $A$ in the variables $(\epsilon, s',r,\omega) \in [0,\epsilon_0]\times \D\times \R \times S^1$. Here we use $r = |s'-t'|$ and $t' = s+ r\omega$.
\end{cor}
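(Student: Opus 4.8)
The plan is to follow the route by which Corollary~\ref{rescaled boundary metric} was deduced from Lemma~\ref{d and 1/d}, replacing the boundary-distance input (Lemma~\ref{dist expression}) by the ambient-distance input Lemma~\ref{eucl norm} (equivalently Corollary~\ref{1/dg}). Since $x^\epsilon(s';x_0) = x(\epsilon s';x_0)$, we have $d_g(x^\epsilon(s';x_0),x^\epsilon(t';x_0)) = d_g(x(\epsilon s';x_0),x(\epsilon t';x_0))$, so I would apply Lemma~\ref{eucl norm} with $s',t'$ replaced by $\epsilon s',\epsilon t'$. Denoting by $\hat r := |\epsilon s'-\epsilon t'|_{h(\epsilon s')}$ and $\hat\omega := (\epsilon t'-\epsilon s')/\hat r \in S_{\epsilon s'}\D_\rho$ the quantities playing the role of $r,\omega$ in that lemma, Lemma~\ref{eucl norm} gives
$$ d_g(x(\epsilon s';x_0),x(\epsilon t';x_0))^2 = \hat r^2\left(1 + \hat r\,{\bf \tilde G}(\epsilon s',\hat\omega) + \hat r^2\,\phi(\epsilon s',\hat r,\hat\omega)\right), $$
where ${\bf \tilde G}$ is smooth and vanishes when its first argument is $0$, and $\phi$ denotes the smooth coefficient of the ``$O(r^2)$'' term from that lemma.

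The only real content is to rewrite the $h(\epsilon s')$-weighted quantities $\hat r,\hat\omega$ in terms of the flat ones $r=|s'-t'|$ and $\omega=(t'-s')/r$, and to verify the resulting correction is $O(\epsilon^2)$, not merely $O(\epsilon)$. Because $t'\mapsto x(t';x_0)$ restricted to $\partial M$ is $h$-geodesic normal coordinates for $(\partial M,h)$ centered at $x_0$ in the orthonormal principal frame $E_1,E_2$, the metric satisfies $h_{\alpha\beta}(0)=\delta_{\alpha\beta}$ and $\partial_\gamma h_{\alpha\beta}(0)=0$; Taylor expanding yields $h_{\alpha\beta}(\epsilon s')=\delta_{\alpha\beta}+\epsilon^2 b_{\alpha\beta}(\epsilon,s')$ with $b_{\alpha\beta}$ smooth, hence $\hat r=\epsilon r\,(1+\epsilon^2 c(\epsilon,s',\omega))^{1/2}$ and $\hat\omega=(1+\epsilon^2 c)^{-1/2}\omega$ for a smooth $c$. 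Substituting into the displayed expansion, using Hadamard's lemma to write ${\bf \tilde G}(\epsilon s',\hat\omega)=\epsilon\,D(\epsilon,s',\omega)$ with $D$ smooth, and using $\hat r=O(\epsilon r)$ in the remaining two bracket terms, I obtain
$$ d_g(x^\epsilon(s';x_0),x^\epsilon(t';x_0))^2 = \epsilon^2 r^2\left(1+\epsilon^2 E(\epsilon,s',r,\omega)\right) $$
with $E$ jointly smooth on $[0,\epsilon_0]\times\D\times\R\times S^1$ (the smoothness across $r=0$ being inherited, as in Lemma~\ref{eucl norm}, from the parity/evenness properties under $(r,\omega)\mapsto(-r,-\omega)$). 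Taking the reciprocal square root and expanding $(1+\epsilon^2 E)^{-1/2}=1+\epsilon^2\tilde E(\epsilon,s',r,\omega)$ for $\epsilon$ small — legitimate since $1+\epsilon^2 E>0$ there — gives $d_g^{-1}=\epsilon^{-1}r^{-1}+\epsilon r^{-1}\tilde E$, which is the claim with $A:=\tilde E$.

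I expect the only (mild) obstacle to be the bookkeeping that shows \emph{both} structural facts are genuinely needed in order to land the error at the claimed size $\epsilon r^{-1}$: dropping $\partial_\gamma h_{\alpha\beta}(0)=0$ already makes $\hat r^{-1}$ contribute an $O(1)=O(\epsilon^0 r^{-1})$ term to $d_g^{-1}$, while dropping the vanishing of ${\bf \tilde G}$ at $s'=0$ (which is precisely what the proof of Lemma~\ref{eucl norm} extracts from the boundary-normal-coordinate inequality $d_g\le d_h$ together with the parity of $\omega\mapsto\partial_r{\bf G}^0_{j,k}(0,0,\omega)$) makes the cross term $\hat r\,{\bf \tilde G}$ contribute an $O(\epsilon)=O(\epsilon^0)$ term. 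Once these two cancellations are isolated, everything else is routine Taylor expansion together with a check of joint smoothness in $(\epsilon,s',r,\omega)$ down to $\epsilon=0$ and $r=0$, exactly parallel to the proof of Corollary~\ref{rescaled boundary metric}.
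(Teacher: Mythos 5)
Your proof is correct and follows precisely the route the paper intends when it states that Corollaries \ref{1/dg} and \ref{rescaled dist kernel} follow from Lemma \ref{eucl norm} ``just as'' Lemma \ref{d and 1/d} and Corollary \ref{rescaled boundary metric} follow from Lemma \ref{dist expression}; you have correctly identified and made explicit the two cancellations --- $\partial_\gamma h_{\alpha\beta}(0)=0$ in normal coordinates (so that passing from $\hat r=|\epsilon s'-\epsilon t'|_{h(\epsilon s')}$ to $\epsilon r$ costs only $\epsilon^2$) and the vanishing of $\tilde{\bf G}$ at $s'=0$ (so that the cross term is $O(\epsilon^2)$) --- that the paper leaves implicit.
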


\begin{lemma}
\label{normal der of distance}
In the coordinate given by $y = x(s';x_0)$,
$$\partial_{\nu_y} d_g(x_0,y) = \frac{ \lambda_1(x_0) s_1^2 + \lambda_2(x_0) s_2^2}{2|s'|} +O(|s'|^2) $$
where $O(|s'|^2)$ denotes a smooth function of the variables $(|s'|,\frac{s'}{|s'|})$ which vanishes to order 2 at the origin.
\end{lemma}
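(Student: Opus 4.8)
The plan is to compute everything in the boundary normal coordinate system $t=(t',t_3)=(t_1,t_2,t_3)$ centered at $x_0$ with tangential axes $E_1(x_0),E_2(x_0)$ built just before Lemma \ref{dist expression}, so that the metric has the block form \eqref{metric bnc}, the boundary is $\{t_3=0\}$, $x_0=x((0,0))$ and $y=x((s',0))$, and $g_{ij}(0)=\delta_{ij}$. In these coordinates the outward unit normal at a boundary point is $\nu=-\partial_{t_3}$, so $\partial_{\nu_y}d_g(x_0,y)=-\,\partial_{t_3}d_g(x_0,x(t',t_3))$ evaluated at $(t',t_3)=(s',0)$. Since $|s'|$ lies below the injectivity radius, $F(t',t_3):=d_g(x_0,x(t',t_3))^2$ is smooth near the origin, vanishing to second order there with Hessian $2g_{ij}(0)=2\delta_{ij}$; hence $\partial_{\nu_y}d_g(x_0,y)=-\,\partial_{t_3}F(s',0)/\bigl(2\sqrt{F(s',0)}\bigr)$ with $\sqrt{F(s',0)}=|s'|\bigl(1+O(|s'|)\bigr)$. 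So the lemma reduces to the single estimate $\partial_{t_3}F(s',0)=-\bigl(\lambda_1(x_0)s_1^2+\lambda_2(x_0)s_2^2\bigr)+O(|s'|^3)$ with a remainder smooth in $s'$.

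For this I would use the Gauss lemma: if $\gamma\colon[0,1]\to\tilde M$ is the $g$-geodesic with $\gamma(0)=x_0$, $\gamma(1)=y$, then $\tfrac12\nabla_yF(y)=\dot\gamma(1)$, and since $g^{3\alpha}\equiv 0$, $g^{33}\equiv1$ in these coordinates, $\partial_{t_3}F(s',0)=2\dot\gamma^3(1)$. Thus everything is governed by the normal component of the short geodesic joining the two boundary points. From \eqref{metric bnc} one computes $\Gamma^3_{3j}\equiv0$ identically and $\Gamma^3_{\alpha\beta}=-\tfrac12\partial_{t_3}h_{\alpha\beta}$, which on the boundary equals the scalar second fundamental form $\II_{\alpha\beta}$ (the paper's inward-normal convention gives the Riccati identity $\partial_{t_3}h_{\alpha\beta}|_{t_3=0}=-2\II_{\alpha\beta}$), with $\II_{\alpha\beta}(x_0)=\mathrm{diag}(\lambda_1,\lambda_2)$ and $h_{\alpha\beta}(0)=\delta_{\alpha\beta}$ in our axes. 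I would then run a short bootstrap on the geodesic equations $\ddot\gamma^k+\Gamma^k_{ij}(\gamma)\dot\gamma^i\dot\gamma^j=0$ with two-point data $\gamma(0)=(0,0)$, $\gamma(1)=(s',0)$: the crude bound $\gamma,\dot\gamma=O(|s'|)$ (from $\mathrm{length}(\gamma)=d_g(x_0,y)\le d_h(x_0,y)=O(|s'|)$) substituted into $\ddot\gamma^3=-\Gamma^3_{\alpha\beta}(\gamma)\dot\gamma^\alpha\dot\gamma^\beta$ gives $\ddot\gamma^3=O(|s'|^2)$, hence (integrating with $\gamma^3(0)=\gamma^3(1)=0$) $\dot\gamma^3,\gamma^3=O(|s'|^2)$; this then forces $\ddot\gamma^\alpha=O(|s'|^3)$ and $\dot\gamma^\alpha(\sigma)=s_\alpha+O(|s'|^3)$; feeding back, $\ddot\gamma^3(\sigma)=-\II_{\alpha\beta}(x_0)s_\alpha s_\beta+O(|s'|^3)=-(\lambda_1 s_1^2+\lambda_2 s_2^2)+O(|s'|^3)$ uniformly on $[0,1]$. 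Integrating twice with $\gamma^3(0)=\gamma^3(1)=0$ gives $\dot\gamma^3(1)=-\tfrac12(\lambda_1 s_1^2+\lambda_2 s_2^2)+O(|s'|^3)$, i.e. $\partial_{t_3}F(s',0)=-(\lambda_1 s_1^2+\lambda_2 s_2^2)+O(|s'|^3)$; substituting into the reduced formula completes the estimate.

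For the stated form of the remainder: $\partial_{t_3}F(\cdot,0)$ is smooth in $s'$ and vanishes to order two, so writing $s'=\rho\omega$, $\rho=|s'|$, $\omega\in S^1$, Taylor's theorem in $\rho$ gives $\partial_{t_3}F(\rho\omega,0)=\rho^2\widetilde N(\rho,\omega)$ with $\widetilde N$ smooth, $\widetilde N(0,\omega)=-(\lambda_1\omega_1^2+\lambda_2\omega_2^2)$; likewise $F(\rho\omega,0)=\rho^2u(\rho,\omega)$ with $u$ smooth, $u(0,\omega)=1$, whence $\sqrt{F(\rho\omega,0)}=\rho v(\rho,\omega)$ with $v=\sqrt u$ smooth, $v(0,\omega)=1$. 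Therefore $\partial_{\nu_y}d_g(x_0,y)=\rho\cdot\bigl(-\widetilde N(\rho,\omega)/(2v(\rho,\omega))\bigr)$ equals $|s'|$ times a smooth function of $(|s'|,s'/|s'|)$, whose leading term is $(\lambda_1 s_1^2+\lambda_2 s_2^2)/(2|s'|)$ and whose remainder vanishes to second order at the origin, exactly as claimed.

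The step I expect to be the main obstacle is the error bookkeeping in the bootstrap — in particular getting $\ddot\gamma^3$ uniformly to precision $O(|s'|^3)$ on $[0,1]$, which requires both $\dot\gamma^3=O(|s'|^2)$ (so that the $\Gamma^\alpha_{3\beta}\dot\gamma^3\dot\gamma^\beta$ terms in the tangential equations are negligible) and $\dot\gamma^\alpha=s_\alpha+O(|s'|^2)$; these are routine short-geodesic estimates but must be chained in the right order. A minor point is keeping the sign in the Riccati identity straight, since $\II$ uses the inward normal whereas $\nu$ is outward.
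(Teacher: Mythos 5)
Your proof is correct and follows essentially the same route as the paper's: both invoke the Gauss lemma to identify $\partial_{\nu_y}d_g(x_0,y)$ with the normal component of the velocity of the geodesic from $x_0$ to $y$, and both extract the leading term from the geodesic ODE via the boundary-normal-coordinate Christoffel symbol $\Gamma^3_{\alpha\beta}=-\tfrac12\partial_{t_3}h_{\alpha\beta}$ and its identification with the second fundamental form. The only difference is cosmetic: you parametrize the geodesic affinely on $[0,1]$ and work with the squared distance $F=d_g^2$ (then divide by $2\sqrt F$ at the end), whereas the paper uses the unit-speed geodesic emanating from $y$, writes $s(\tau)=(\hat s',0)+\tau V+r(\tau;V)$, and reads off $V_3$ by Taylor expansion in $\tau$ — the two bookkeeping schemes are equivalent.
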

\begin{proof}
By Gauss Lemma, for $y$ near $x_0$, ${\rm grad}_y d_g(x_0,y) = \dot \gamma( d_g(x_0,y))$ where $\gamma(\cdot)$ is the unit speed geodesic in $(M,g)$ from $x_0$ to $y$. Therefore we have that
\begin{eqnarray}
\label{normal derivative of distance}
\partial_{\nu_y} d_g(x_0,y) = -\left \langle \nu_y, \frac{ \e_{y;g}^{-1}( x_0)}{|\e_{y;g}^{-1}( x_0)|_g}\right\rangle_g.
\end{eqnarray}

In  the coordinates given by $s\mapsto x(s;x_0)$ the Christoffel symbols are
\begin{eqnarray}
\label{ksymbol bnc}
\Gamma_{3,3}^3 = \Gamma_{\alpha, 3}^3 = \Gamma_{3,\alpha}^3 = 0,\ \ \Gamma_{\alpha,\beta}^3 = -\frac{1}{2}\partial_3 h_{\alpha,\beta}.
\end{eqnarray}
 
Choose $(\hat s',0)\in \R^3$ so that $x(\hat s',0;x_0) = y$. By Lemma \ref{eucl norm}, $ d_g(y, x_0)$ is a smooth function of $(|\hat s'|, \frac{\hat s'}{|\hat s'|})$ when we write $y = x(\hat s';x_0)$ in these coordinates. Let $V(y):= \frac{\e_{y;g}^{-1}(x_0)}{|\e_{y;g}^{-1}(x_0)|_g} = \sum_{j = 1}^3 V_j(\hat s') \partial_j$ be the unique unit vector over $y$ so that the $(M,g)$ unit velocity geodesic in these coordinates starting at $y$ with initial direction $V(y)$ reaches $x_0$ in time $ d_g(y, x_0)$. In the coordinates given by $s'\mapsto x(s';x_0)$, we want to argue that $V_j(\hat s')$ are smooth functions of $(|\hat s'|, \frac{\hat s'}{|\hat s'|})$. To do so, observe that in $g$-geodesic coordinates centered around $x_0$ this is of course the case. The result for any other coordinate system can then be obtained via a change of variable.

Note that the outward pointing normal is given by $-\partial_3$ since $s_3>0$ in $M$. Using \eqref{normal derivative of distance} and the expression for the metric \eqref{metric bnc} we have that in the chosen boundary normal coordinate system
\begin{eqnarray}
\label{normal derivative is V3}
\partial_{\nu_y} d_g(x_0,y) =  V_3.
\end{eqnarray}

After time $\tau$, the geodesic with initial position $(\hat s',0)$ and initial unit velocity $V$ can be written in the $s = (s',s_3)$ coordinate as
$$  s(\tau) = (\hat s',0) + \tau V + r(\tau; V)$$
for some remainder $r = (r_1, r_2, r_3)$ which has initial condition $r(0) = \dot r(0) = 0$. Taylor expanding $r(\cdot; V)$ we have that 
\begin{eqnarray}
\label{s of tau}
s(\tau) = (\hat s', 0) + \tau V + \frac{\tau^2}{2} \ddot r(0;V) + \tau^3r'(\tau; V)
\end{eqnarray}
for some $r'(\tau;V)$ depending smoothly on $\tau$ and $V$.

Due to \eqref{normal derivative is V3}, we are particularly interested in the evolution of the $r_3$ component which solves the ODE
\begin{eqnarray}
\label{r3 ode}
\ddot r_3(\tau; V) &=& -\sum_{j,k= 1}^3 \Gamma^3_{j,k} (s(\tau))(V_j + \dot r_j)(V_k +\dot r_k)\\\nonumber &=&  \frac{1}{2} \sum_{\alpha, \beta= 1}^2 \partial_3 h_{\alpha,\beta} (s(\tau))(V_\alpha + \dot r_\alpha)(V_\beta +\dot r_\beta) .
\end{eqnarray}
The last equality comes from \eqref{ksymbol bnc}.

Now set $\tau = d_g(y, x_0)$ so that $s(\tau) = 0$, we have from \eqref{s of tau} that for $\alpha = 1,2$,
$$V_\alpha =- \frac{\hat s_\alpha}{d_g(y, x_0) } +d_g(y, x_0) f_\alpha $$ 
for functions $f_\alpha$ which is smooth in $V$ and $d_g (y,x_0)$ and thus smooth functions of the $(\hat s', \hat s'/|\hat s'|)$ variable. Inserting this into \eqref{r3 ode} yields
{\small$$\frac{d_g(x,y)^2}{2} \ddot r_3(0; V) = \frac{1}{4} \sum_{\alpha, \beta} \partial_3 h_{\alpha,\beta}( 0) \hat s_\alpha \hat s_\beta + d_g(y, x)^3 f$$}%
for some function $f$ which is smooth in the variable $(|\hat s'|, \hat s'/|\hat s'|)$.
Inserting this expression into \eqref{s of tau} for $\tau = d_g(x_0,y)$ we have that
\begin{eqnarray*}V_3(\hat s') &=& - \frac{d_g(x,y)}{2} \ddot r_3(0;V)  + d_g(x_0,y)^2 r'_3(d_g(x_0,y), V(s'))\\
&=&-  \frac{1}{4} \sum_{\alpha, \beta} \frac{\partial_3 h_{\alpha,\beta}( 0)}{d_g(x_0,y)} \hat s_\alpha \hat s_\beta + d_g(y, x_0)^2 (f + r')\\
%&&+ d_g(x,y)^2 r'_3(d_g(x,y), V(s'))
\end{eqnarray*}
Using Lemma \ref{eucl norm} we have that $d_g(x_0,y)^{-1} = |\hat s'|^{-1} + F(\frac{\hat s'}{|\hat s'|}, s')$ for some smooth function $F(\cdot, \cdot)$. We may thus write 
$$V_3(\hat s') = - \frac{1}{4} \sum_{\alpha, \beta} \frac{\partial_3 h_{\alpha,\beta}( 0)}{|\hat s'|} \hat s_\alpha \hat s_\beta + O(|\hat s'|^2) ,$$
where $O(|\hat s'|^2)$ is a smooth function of $(\frac{\hat s'}{|\hat s'|}, \hat s')$ which vanishes to order $2$ near the origin.  
Now use the fact that $-\frac{1}{2} \partial_3 h_{\alpha, \beta}(0)$ is the coordinate expression for the scalar second fundamental form with respect to the normal given by $\partial_3$ \footnote{recall that we are using the convention where $\II$ and shape operator are defined with respect to the {\em inward} pointing normal} at the point $x_0$ (see Proposition 8.17 of \cite{lee}) and our coordinate system $x(s';x_0)$ is chosen so that the shape operator is diagonalized at $x_0$. Therefore,
$$V_3(\hat s') = \frac{1}{2} \sum_{\alpha, \beta} \frac{\lambda_1 \hat s_1^2 + \lambda_2 \hat s_2^2}{|\hat s'|} + O(|\hat s'|^2).$$
In view of \eqref{normal derivative is V3} we have proven the required identity.
\end{proof}

Let $x_0\in \partial M$ and define $R_{\II}(\cdot, \cdot), R_{\II*}(\cdot, \cdot) \in L^\infty(B_h(\rho;x_0) B_h(\rho ;x_0))$ by 
\begin{eqnarray}
 R_{\II}(x,y) :=  \II_x \left(\frac{\e_ {x;h}^{-1}y}{|\e_{x;h}^{-1} y|_h}, \frac{\e _{x;h}^{-1}y}{|\e _{x,h}^{-1} y|_h}\right),\\\nonumber R_{\II*}(x,y) :=  \II_x \left(*\frac{\e_ {x;h}^{-1}y}{|\e_{x;h}^{-1} y|_h}, *\frac{\e _{x;h}^{-1}y}{|\e _{x,h}^{-1} y|_h}\right).
\end{eqnarray}
Here $*$ is the Hodge star operator associated to the metric $h$.
\begin{lemma}
\label{exp inverse vector}
Let $ x(\cdot;x_0): \D_\rho \to \partial M $ be a normal coordinate system for $(\partial M, h)$ centred around $x_0\in \partial M$ and let $h$ denote the pullback metric tensor on on $\D_\rho$ under this coordinate. Then for all $s',t'\in \D_\rho$ sufficiently close to the origin,
$$\frac{\exp^{-1}_{t'; h} (s')}{|\exp^{-1}_{t'; h} (s')|_{h(t')}} = \sum_{j = 1}^2 \omega_j \partial_j + O(t') + O(r),$$
where $\omega := \frac{s'-t'}{r}\in S^1$ with $r: = |s'-t'|$ being the Euclidean distance between $s'$ and $t'$. The $O(t')$ (resp $O(r)$) term denotes a smooth map of $(t', \omega, r ) \in \D_\rho \times S^2\times [0,\rho]$ which vanishes to order $1$ as $t'\to 0$ (resp $r\to0$).
\end{lemma}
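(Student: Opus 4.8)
The plan is to exploit the joint smoothness of the inverse exponential map together with the fact that its differential at the base point is the identity: a Hadamard-type factorization isolates the leading term $\sum_j \omega_j\partial_j$, and the error structure is then read off from the normal-coordinate expansion of $h$.

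First I would set $F(t',s'):=\exp^{-1}_{t';h}(s')\in\R^2$, which is smooth in $(t',s')$ in a neighbourhood of the diagonal of $\D_\rho\times\D_\rho$ and vanishes when $s'=t'$. Applying the fundamental theorem of calculus to $u\mapsto F(t',t'+u(s'-t'))$ gives $F(t',s')=L(t',s'-t')\,(s'-t')$, where $L(t',v):=\int_0^1 (D_{s'}F)(t',t'+uv)\,du$ is smooth in $(t',v)$ and $L(t',0)=(D_{s'}F)(t',t')=d(\exp^{-1}_{t';h})_{t'}$. Since $d(\exp_{t';h})_0$ is the canonical identification $T_{t'}\D_\rho\to T_{t'}\D_\rho$, which in the coordinate chart is the identity $2\times2$ matrix, we get $L(t',0)=\mathrm{Id}$. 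Writing $s'-t'=r\omega$ and $\Phi(t',r,\omega):=L(t',r\omega)\,\omega$ yields the factorization
$$\exp^{-1}_{t';h}(s')=r\,\Phi(t',r,\omega),\qquad \Phi\ \text{smooth on}\ \D_\rho\times[0,\rho]\times S^1,\qquad \Phi(t',0,\omega)=\omega .$$

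Next I would normalize. Taylor expanding in $r$, $\Phi(t',r,\omega)=\omega+r\,\Psi(t',r,\omega)$ with $\Psi$ smooth, so $|\exp^{-1}_{t';h}(s')|_{h(t')}=r\,|\Phi(t',r,\omega)|_{h(t')}$; cancelling $r$ and expanding the (smooth, nonvanishing near $(t',r)=(0,0)$) denominator in $r$ gives
$$\frac{\exp^{-1}_{t';h}(s')}{|\exp^{-1}_{t';h}(s')|_{h(t')}}=\frac{\omega+r\,\Psi(t',r,\omega)}{|\omega+r\,\Psi(t',r,\omega)|_{h(t')}}=\frac{\omega}{|\omega|_{h(t')}}+O(r),$$
with $O(r)$ smooth in $(t',r,\omega)$ and vanishing at $r=0$. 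Finally, since $x(\cdot;x_0)$ is a normal coordinate system one has $h_{jk}(0)=\delta_{jk}$, hence $|\omega|_{h(t')}^2=\sum_{j,k}h_{jk}(t')\omega_j\omega_k=1+q(t',\omega)$ with $q$ smooth and $q(0,\omega)=0$; therefore $\omega/|\omega|_{h(t')}=\omega+O(t')$ with the error smooth in $(t',\omega)$ and vanishing at $t'=0$. Combining the last two displays gives the claim.

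I expect no serious obstacle here; the only point requiring care is the bookkeeping of the error term. The correction beyond $\sum_j\omega_j\partial_j$ contains cross terms such as (a component of $t'$) times $r$; such a term is simultaneously $O(t')$ and $O(r)$, so the decomposition ``$O(t')+O(r)$'' is legitimate, and the argument produces no constant term, nor a term of the form $t'$ alone or $r$ alone. A more computational alternative — Taylor expanding the unit-speed geodesic $\gamma$ with $\gamma(0)=t'$, $\gamma(\sigma)=s'$, solving for the initial velocity $W$, and using $\Gamma^k_{ij}(0)=0$ in normal coordinates together with $\sigma=d_h(x(t';x_0),x(s';x_0))=r(1+O(t')+O(r))$ from the expansion underlying Lemma \ref{dist expression} — reaches the same conclusion, but then one must track the provenance of each $O(t')$, which the Hadamard argument above avoids.
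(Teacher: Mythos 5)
Your argument is correct, and it takes a genuinely different route from the paper's. The paper works through the squared distance function: it factors $d_h(s',t')^2=\sum H_{j,k}(s',t')(s_j-t_j)(s_k-t_k)$ with $H_{j,k}(t',t')=h_{j,k}(t')=\delta_{j,k}+O(|t'|^2)$, extracts $d_h(s',t')=|s'-t'|\bigl(1+F(t',\omega,r)\bigr)$ with $F=O(t')+O(r)$, and then invokes the Gauss-lemma identity $\exp^{-1}_{t';h}(s')/|\exp^{-1}_{t';h}(s')|_{h(t')}=\operatorname{grad}_{t'}d_h(s',t')$ together with $\operatorname{grad}_{t'}=\sum h^{j,k}(t')\partial_{t_j}(\cdot)\,\partial_k$ to land on $\sum_j\omega_j\partial_j+O(t')+O(r)$. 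You instead apply the Hadamard factorization \emph{directly} to the map $(t',s')\mapsto\exp^{-1}_{t';h}(s')$, using only its joint smoothness, the vanishing on the diagonal, and the fact that $d(\exp^{-1}_{t';h})_{t'}=\mathrm{Id}$; the normal-coordinate condition $h_{jk}(0)=\delta_{jk}$ enters only at the very last normalization step. Your route is a bit more self-contained: it bypasses the distance-function machinery of Lemma \ref{dist expression} and the gradient identity entirely, and it also sidesteps the sign convention one would otherwise need to track in $\operatorname{grad}_{t'}d_h$ (in Euclidean space $\operatorname{grad}_{t'}|s'-t'|=-\omega$, so the paper's displayed identity is off by a sign that is harmless for the stated $O$-structure but still worth noticing). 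The paper's route, on the other hand, recycles the factorization of $d_h^2$ that it has already set up for earlier lemmas. Both arguments establish the same smoothness and vanishing structure of the remainder, and both are of comparable length; your closing remark about cross terms being simultaneously $O(t')$ and $O(r)$ is exactly the right bookkeeping point.
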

\begin{proof}

This comes from the fact that for some matrix $H_{j,k}(s',t')$ smooth in $(s',t')$
$$d_{h}(s',t')^2 = \sum_{j=1}^2H_{j,k}(s',t')(s_j - t_j) (s_k - t_k),$$
where $H_{j,k}(t',t')  = h_{j,k}(t') =\delta_{j,k} + O(|t'|^2)$. Therefore 
$$d_{h(s',t')}(s',t') = |s'-t'| + |s'-t'| F\left(t', \frac{s'-t'}{ |s'-t'|},  |s'-t'|\right)$$
for some smooth function $F\in C^\infty(\D_\rho \times S^1\times[0,r_0])$ which is $O(t') + O(r)$.

A coordinate calculation yields that $\frac{\exp^{-1}_{t', h} (s')}{|\exp^{-1}_{t', h} (s')|_{h(t')}} = {\rm grad}_{t'} d_h(s',t')$ so 
$$\frac{\exp^{-1}_{t', h} (s')}{|\exp^{-1}_{t', h} (s')|_{h(t')}} = \sum_{j,k = 1}^2 h_{j,k}(t') \partial_{t_j} d_{h^\theta(s',t')}(s',t') \partial_k = \sum_{j} \omega_j \partial_j + O(t') + O(r).$$

\end{proof}
\begin{cor}
\label{R infty expression}
Let $x_0\in \partial M$ and $\lambda_1(x_0)$ and $\lambda_2(x_0)$ be the eigenvalues of the shape operator at $x_0$. Then,\\
i) in the $t' \mapsto x(t'; x_0)$ coordinate system prescribed at the beginning of this section,
$$ R_\II(x(t'; x_0), x(s'; x_0)) -\left( \lambda_1(x_0) \frac{(s_1 - t_1)^2}{|s' - t'|^2} + \lambda_2(x_0) \frac{(s_2 - t_2)^2}{|s' - t'|^2}\right) = O(r) + O(t')$$
$$ R_{\II*}(x(t'; x_0), x(s'; x_0)) -\left( \lambda_2(x_0) \frac{(s_1 - t_1)^2}{|s' - t'|^2} + \lambda_1(x_0) \frac{(s_2 - t_2)^2}{|s' - t'|^2}\right) = O(r) + O(t')$$

The $O(t')$ (resp $O(r)$) term denotes a smooth function of $(t', \omega, r ) \in \D_\rho \times S^2\times [0,\rho]$ which vanishes to order $1$ as $t'\to 0$ (resp $r\to0$).\\\\
ii)In the $t' \mapsto x^\epsilon(t';x_0)$ coordinate systems prescribed at the beginning of this section,
$$ R_\II(x^\epsilon(t'; x_0), x^\epsilon(s'; x_0)) -\left( \lambda_1(x_0) \frac{(s_1 - t_1)^2}{|s' - t'|^2} + \lambda_2(x_0) \frac{(s_2 - t_2)^2}{|s' - t'|^2}\right) = \epsilon R_\epsilon (t',\omega, r),$$ 
$$ R_{\II*}(x^\epsilon(t'; x_0), x^\epsilon(s'; x_0)) -\left( \lambda_2(x_0) \frac{(s_1 - t_1)^2}{|s' - t'|^2} + \lambda_1(x_0) \frac{(s_2 - t_2)^2}{|s' - t'|^2}\right) = \epsilon R_\epsilon (t',\omega, r),$$ 
where $R_\epsilon(t',\omega, r)$ is smooth with derivatives of all orders uniformly bounded in $\epsilon$.

\end{cor}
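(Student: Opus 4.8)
The plan is to read off both identities from Lemma~\ref{exp inverse vector} together with the smooth dependence of the scalar second fundamental form $\II$ and of the Hodge star $*$ on their base point, using that the coordinate system $x(\cdot;x_0)$ is built from the principal directions $E_1,E_2$, so that $\II$ is diagonal at $t'=0$.

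\emph{The unrescaled statement.} Write $v(t',s') := \exp_{t';h}^{-1}(s')/|\exp_{t';h}^{-1}(s')|_{h(t')}$, so that $R_\II(x(t';x_0),x(s';x_0)) = \II_{x(t';x_0)}(v,v)$ and $R_{\II*}(x(t';x_0),x(s';x_0)) = \II_{x(t';x_0)}(*v,*v)$. By Lemma~\ref{exp inverse vector}, $v = \sum_{j=1}^2\omega_j\partial_j + A + B$, where $\omega = (s'-t')/r$, $r=|s'-t'|$, and $A = O(t')$, $B=O(r)$ are vector-valued remainders smooth in $(t',\omega,r)$. In the chosen coordinates the components of $\II_{x(t';x_0)}$ are smooth functions $\II_{jk}(t')$, and because $\partial_j|_{t'=0} = E_j$ and $\II(E_j,E_k) = \lambda_j(x_0)\delta_{jk}$, we have $\II_{jk}(t') = \lambda_j(x_0)\delta_{jk} + O(t')$. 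Since $\omega$ is a unit vector and each $\II_{jk}$ is bounded, expanding $\II_{x(t';x_0)}(v,v) = \sum_{j,k}\II_{jk}(t')\,v_j v_k$ and discarding the cross terms $\omega_j(A_k+B_k)$ and the quadratic remainder terms — all of which are $O(t')+O(r)$ after multiplication by $\II_{jk}(t')$ — leaves $\sum_{j,k}\lambda_j(x_0)\delta_{jk}\,\omega_j\omega_k + O(t')+O(r) = \lambda_1(x_0)\omega_1^2 + \lambda_2(x_0)\omega_2^2 + O(t')+O(r)$. As $\omega_i^2 = (s_i-t_i)^2/|s'-t'|^2$, this is the first identity. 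For $R_{\II*}$ the same computation applies to $*v$: the Hodge star is a smooth bundle endomorphism, and at $t'=0$, where $h_{jk}(0)=\delta_{jk}$ and the orientation is fixed by $E_1^\flat\wedge E_2^\flat\wedge\nu^\flat$ positive, it is rotation by $\pi/2$, so $*\big(\sum_j\omega_j\partial_j\big) = \omega_1\partial_2 - \omega_2\partial_1 + O(t')$. Feeding this into $\II_{x_0}$ interchanges $\lambda_1$ and $\lambda_2$ and yields $\lambda_2(x_0)\omega_1^2 + \lambda_1(x_0)\omega_2^2 + O(t')+O(r)$, which is the second identity.

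\emph{The rescaled statement.} Since $x^\epsilon(t';x_0) = x(\epsilon t';x_0)$, we have $R_\II(x^\epsilon(t';x_0),x^\epsilon(s';x_0)) = R_\II(x(\epsilon t';x_0),x(\epsilon s';x_0))$, and likewise for $R_{\II*}$. Under $s'\mapsto\epsilon s'$, $t'\mapsto\epsilon t'$ the unit vector $\omega$ and the scale-invariant ratios $(s_i-t_i)^2/|s'-t'|^2$ are unchanged while $r$ becomes $\epsilon r$. Hence applying part~i) with arguments $(\epsilon t',\epsilon s')$ gives
$$R_\II(x^\epsilon(t';x_0),x^\epsilon(s';x_0)) - \Big(\lambda_1(x_0)\tfrac{(s_1-t_1)^2}{|s'-t'|^2} + \lambda_2(x_0)\tfrac{(s_2-t_2)^2}{|s'-t'|^2}\Big) = P(\epsilon t',\omega,\epsilon r) + Q(\epsilon t',\omega,\epsilon r),$$
where $P = O(t')$ and $Q = O(r)$ are the two remainders produced above. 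Writing (Hadamard's lemma) $P(t',\omega,r) = \sum_i t'_i P_i(t',\omega,r)$ and $Q(t',\omega,r) = r\,Q_0(t',\omega,r)$ with $P_i,Q_0$ smooth, the right-hand side equals $\epsilon\big(\sum_i t'_i P_i(\epsilon t',\omega,\epsilon r) + r\,Q_0(\epsilon t',\omega,\epsilon r)\big) =: \epsilon R_\epsilon(t',\omega,r)$, and on the compact set $(t',\omega,r)\in\D\times S^1\times[0,\rho]$, $\epsilon\in[0,\epsilon_0]$, $R_\epsilon$ is smooth with all derivatives bounded uniformly in $\epsilon$. The same substitution applied to $*v$ gives the companion identity for $R_{\II*}$.

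\emph{Main obstacle.} No deep difficulty is expected; the care lies in the bilinear bookkeeping in part~i) — checking that the cross terms between the leading direction $\sum_j\omega_j\partial_j$ and the remainders $A,B$ are not promoted above $O(t')+O(r)$ after multiplication by the bounded coefficients $\II_{jk}(t')$ — and in pinning down $*\big(\sum_j\omega_j\partial_j\big) = \omega_1\partial_2 - \omega_2\partial_1 + O(t')$ with the sign dictated by the fixed orientation, which is exactly what produces the interchange $\lambda_1\leftrightarrow\lambda_2$ between $R_\II$ and $R_{\II*}$.
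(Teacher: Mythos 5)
Your proposal is correct and follows essentially the same approach as the paper: part ii) is reduced to part i) via $x^\epsilon(t';x_0)=x(\epsilon t';x_0)$, and part i) is obtained by combining Lemma~\ref{exp inverse vector} with the expansion $\II_{j,k}(t')=\lambda_j(x_0)\delta_{j,k}+O(t')$ valid in these principal-direction coordinates, with $R_{\II*}$ handled by a rotation. The only difference is that you spell out the bilinear bookkeeping, the Hadamard factoring in part ii), and the explicit $*v=\omega_1\partial_2-\omega_2\partial_1+O(t')$ step that the paper leaves implicit.
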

\begin{proof}
Since $x^\epsilon(t';x_0) = x(\epsilon t' ; x_0)$ we have that ii) is a consequence of i). For i), we will only prove this for $R_\II$ since the statement for $R_{\II*}$ can be obtained via a rotation.

Recall that the normal coordinate system $t'\mapsto x(t';x_0)$ is chosen so that at $x_0$ the coordinate vectors $\{\partial_{t_1},\partial_{t_2}\}$ pushes forward under $x(\cdot; x_0$ to become of eigenvectors of the shape operator. Because of this we have that the pull-back of $\II_{x}(\cdot, \cdot)$ under this coordinate system is given by $\II_{x(t';x_0)} = \sum_{j,k= 1}^2 \II_{j,k} dt_jdt_k$ where $\II_{j,k} = \lambda_j(x_0) \delta_{j,k} + O(t')$. Here $O(t')$ denotes a smooth function of $t'$ which vanishes at the origin. Using the expression derived in Lemma \ref{exp inverse vector} for the vector $\frac{\e_ {x;h}^{-1}y}{|\e_{x;h}^{-1} y|_h}$ in the coordinate given by $x = x(t';x_0)$ and $y = x(s';x_0)$,  we have the desired expression for $R_{\II}(x(t';x_0), x(s';x_0))$.
\end{proof}

\subsection{Operator Estimates}
In this section we derive Sobolev estimates for some integral kernels we will encounter when obtaining the asymptotic expansions of Theorems \ref{main theorem disk} and \ref{main theorem}. As these depend on a parameter $\epsilon>0$ and do not immediately fit into the framework of semiclassical calculus, we need to keep track of the bounds by hand.

It is useful to take the Fourier transform with respect to only some variables. Let $u(s',t')$ be a family of tempered distributions in $t'\in \R^2$ depending smoothly on the parameter $s'\in \R^2$. That is, it is the distribution defined by $\phi\mapsto \int_{\R^2} u(s',t') \phi(t') dt'$ for all $\phi\in S(\R^2)$. We denote by $\F_{t'} (u(s', t'))(\xi')$ to be the Fourier transform with respect to the $t'$ variable only.

\begin{lemma}
\label{differentiate the symbol}
Let $A(s', \omega)$ be a smooth function on $(s',\omega)\in \R^2\times S^1$. For $j \geq 0$ and $\xi' \neq 0$ we have that for any multi-index $\alpha$,
$$ D^\alpha_{s'} \F_{t'} \left( A\left( s', \frac{t'}{|t'|}\right)|t'|^{j-1}\right)\left(\xi'\right) = \F_{t'} \left( D^\alpha_{s'} A\left( s', \frac{t'}{|t'|}\right)|t'|^{j-1}\right)\left(\xi'\right) .$$
Furthermore, $ \F_{t'} \left( A\left( s', \frac{t'}{|t'|}\right)|t'|^{j-1}\right)\left(\xi'\right) $ is jointly smooth in $(s',\xi')\in \R^2 \times \R^2\backslash \{0\}$.

\end{lemma}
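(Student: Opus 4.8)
The plan is to work directly from the definition of the Fourier transform of a homogeneous-type distribution. First I would observe that $A(s',t'/|t'|)|t'|^{j-1}$ is, for each fixed $s'$, a tempered distribution in $t'$ of homogeneous order $j-1 > -2$ (note $j \geq 0$, so the order exceeds $-n = -2$), hence locally integrable near the origin and with at most polynomial growth at infinity; thus its Fourier transform is well defined as a tempered distribution. The key structural fact is the standard one (see e.g.\ the theory of homogeneous distributions in Ch.\ 7 of \cite{taylor2} or \cite{joshifriedlander}): the Fourier transform of a function homogeneous of degree $j-1$ and smooth away from the origin is homogeneous of degree $-(j-1)-2 = -j-1$ and smooth away from the origin. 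The dependence of $A$ on the parameter $s'$ is harmless because one may expand $A(s',\cdot)$ in spherical harmonics on $S^1$, $A(s',\omega) = \sum_m c_m(s') Y_m(\omega)$ with $c_m(s')$ smooth in $s'$ and rapidly decaying in $m$, and the Fourier transform of each piece $c_m(s')Y_m(t'/|t'|)|t'|^{j-1}$ is $c_m(s')$ times an explicit homogeneous kernel (a constant multiple of $Y_m(\xi'/|\xi'|)|\xi'|^{-j-1}$, the Bochner–Hecke formula), which is manifestly jointly smooth in $(s',\xi')$ on $\R^2\times(\R^2\setminus\{0\})$; summing the (rapidly convergent) series preserves joint smoothness.

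For the commutation identity, the point is that differentiation in $s'$ commutes with the $t'$-Fourier transform whenever the family of distributions depends smoothly on $s'$ in the appropriate sense. Concretely, I would pair both sides against a test function $\phi \in S(\R^2_{\xi'})$ that is supported away from $\xi' = 0$ — which suffices since both sides are claimed to agree only for $\xi'\neq 0$, and a distribution on $\R^2\setminus\{0\}$ is determined by such pairings. Then
$$\left\langle D^\alpha_{s'}\F_{t'}\big(A(s',t'/|t'|)|t'|^{j-1}\big),\ \phi\right\rangle = D^\alpha_{s'}\left\langle \F_{t'}\big(A(s',t'/|t'|)|t'|^{j-1}\big),\ \phi\right\rangle = D^\alpha_{s'}\left\langle A(s',t'/|t'|)|t'|^{j-1},\ \check\phi\right\rangle,$$
and since $A(s',t'/|t'|)|t'|^{j-1}\check\phi(t')$ is, for $t'$ near the origin, dominated by $|t'|^{j-1}$ times something integrable and, for $t'$ large, by $|t'|^{j-1}$ times a Schwartz function, one may differentiate under the integral sign in $s'$ by dominated convergence (the $s'$-derivatives of $A$ are again bounded on the compact sphere, uniformly for $s'$ in a neighborhood). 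This moves $D^\alpha_{s'}$ inside, giving $\langle D^\alpha_{s'}A(s',t'/|t'|)|t'|^{j-1}, \check\phi\rangle = \langle \F_{t'}(D^\alpha_{s'}A(s',t'/|t'|)|t'|^{j-1}), \phi\rangle$, which is the desired identity.

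The main obstacle is purely the singularity at the origin in $t'$: one must be careful that the integrals defining the pairings converge and that differentiation under the integral is legitimate despite the $|t'|^{j-1}$ factor. This is where the hypothesis $j \geq 0$ is used — it guarantees $j - 1 > -2$, so $|t'|^{j-1}$ is locally integrable in $\R^2$ and no regularization (finite part) is needed; the $s'$-derivatives only hit the bounded angular factor $A$, never the radial weight, so they do not worsen the singularity. Once convergence and differentiation-under-the-integral are secured, the commutation identity is immediate, and the joint smoothness of the transformed kernel follows from the homogeneity/spherical-harmonic decomposition described above. I would expect the write-up to be short: cite the homogeneous-distribution Fourier transform result, note the spherical harmonic expansion handles the parameter, and verify the dominated-convergence hypotheses for the differentiation.
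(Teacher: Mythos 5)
Your proof is correct but takes a genuinely different route from the paper's. The paper argues elementarily: it writes $A(s',t'/|t'|)|t'|^{j-1}$ as $\chi(t')\cdot(\ldots) + (1-\chi(t'))\cdot(\ldots)$ with $\chi$ a cutoff equal to $1$ near the origin, observes that the first piece has compact support and locally integrable singularity (since $j-1>-2$), hence an absolutely convergent Fourier integral, and for the second piece trades powers of $|\xi'|$ for applications of $\Delta_{t'}^k$ (legitimate since $\xi'\neq 0$) to produce a second absolutely convergent integral. Once both pieces are literally absolutely convergent integrals depending smoothly on $(s',\xi')$, the joint smoothness and the commutation of $D^\alpha_{s'}$ with $\F_{t'}$ both follow immediately by differentiating under the integral sign. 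Your route instead invokes the general theory of Fourier transforms of homogeneous distributions plus a spherical-harmonic/Bochner--Hecke decomposition to get joint smoothness, and then establishes the commutation by pairing against test functions supported away from $\xi'=0$ and applying dominated convergence. Both are sound. The paper's approach is shorter and avoids any appeal to the homogeneous-distribution theory (and in particular never needs to worry about the exceptional degrees where $|t'|^{j-1}Y_m(\omega)$ is a polynomial, whose Fourier transform is supported at the origin -- a case your route handles implicitly only because the claim is restricted to $\xi'\neq 0$, which you do note). Your approach, on the other hand, gives more structural information as a byproduct, namely the explicit degree $-j-1$ homogeneity and the spherical-harmonic shape of the transformed kernel, which is arguably closer to what is ultimately used downstream in the symbol computations. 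If you adopted your approach, it would be worth adding a sentence acknowledging the exceptional polynomial cases and why they are harmless here.
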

\begin{proof}
 Let $\chi \in C^{\infty}_c(\R^2)$ be identically $1$ near the origin. We can write for any positive integer $k$ and $\xi'\neq 0$,
\begin{eqnarray*}
 \F_{t'} \left( A\left(s', \frac{t'}{|t'|}\right)|t'|^{j-1}\right)\left(\xi'\right) &=& \F_{t'} \left( A\left( s', \frac{t'}{|t'|}\right)|t'|^{j-1}\chi(t')\right)\left(\xi'\right)  \\
&+& |\xi'|^{-2k}  \F_{t'} \left(\Delta_{t'}^k\left( A\left( s', \frac{t'}{|t'|}\right)|t'|^{j-1}(1-\chi(t'))\right)\right)\left(\xi'\right)  .\end{eqnarray*}
The first integral is absolutely convergent by the $\chi(t')$ cut-off. The second Fourier transform is also absolutely convergent owing to the fact that the integrand is smooth and $\Delta_{t'}^k$ makes the integrand decay quickly as $t'\to \infty$ provided $k$ is chosen large enough.
\end{proof}
\begin{lemma}
\label{uniform operator bound}
Let $A_\epsilon(s',r,\omega)$ be a family of $C^\infty_c( \R^2\times \R\times S^1)$ whose support is uniformly bounded in $\epsilon \in [0,\epsilon_0]$ and whose derivatives are also uniformly bounded in $\epsilon$.  Then for all $l\geq 0$,
$$\A_\epsilon f := \int_{\R^2} A_\epsilon(s',r,\omega) r^l f(s'+ r\omega) dr d\omega$$
is a map bounded uniformly in $\epsilon$ from $H^{m}(\R^2) \to H^{m+1+l}(\R^2)$.
\end{lemma}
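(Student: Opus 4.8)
The plan is to exhibit $\A_\epsilon$ as a family of pseudodifferential operators on $\R^2$ of order $-(1+l)$ whose symbol has seminorms bounded uniformly in $\epsilon$, and then to invoke the standard $L^2$--Sobolev continuity theorem for the class $\Psi^{-(1+l)}_1(\R^2)$ (Chapter 7 of \cite{taylor2} or \cite{taylorpdo}): it yields a bound $H^{m}(\R^2)\to H^{m+1+l}(\R^2)$ whose norm is controlled by finitely many symbol seminorms, and since those are uniform in $\epsilon$, so is the conclusion. All the estimates are established a priori for $f\in\mathcal S(\R^2)$ and then extended by density.

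First I would write down the Schwartz kernel of $\A_\epsilon$. Introducing $t'=s'+r\omega$, so that $r=|t'-s'|$, $\omega=(t'-s')/|t'-s'|$ and $dr\,d\omega=|t'-s'|^{-1}\,dt'$ (and assuming, as we may, that $A_\epsilon$ is supported in $\{r\ge 0\}$), one gets
$$\A_\epsilon f(s')=\int_{\R^2}K_\epsilon(s',t')\,f(t')\,dt',\qquad K_\epsilon(s',t')=A_\epsilon\!\left(s',\,|t'-s'|,\,\tfrac{t'-s'}{|t'-s'|}\right)|t'-s'|^{\,l-1}.$$
Because $A_\epsilon$ has uniformly bounded support, $K_\epsilon$ is supported in a fixed compact neighbourhood of the diagonal $\{s'=t'\}$; because $l\ge 0$, it is locally integrable. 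Writing $K_\epsilon(s',t')=\kappa_\epsilon(s',\,t'-s')$ with $\kappa_\epsilon(s',z):=A_\epsilon\big(s',|z|,z/|z|\big)|z|^{l-1}$, the full symbol of $\A_\epsilon$ is obtained by partial Fourier transformation of $\kappa_\epsilon$ in its second slot (cf.\ \eqref{inverse transform}):
$$a_\epsilon(s',\xi')=\F_z\big(\kappa_\epsilon(s',\cdot)\big)(\xi').$$

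To check that $a_\epsilon\in S^{-(1+l)}_1(T^*\R^2)$ uniformly in $\epsilon$, I would Taylor-expand the smooth factor $A_\epsilon$ in the radial variable $r$. Fixing $\chi\in C^\infty_c(\R^2)$ which is $\equiv 1$ near $z=0$ and on the $z$-support of $\kappa_\epsilon$, and writing $a_{\epsilon,k}(s',\omega):=\partial_r^k A_\epsilon(s',0,\omega)$, one has
$$\kappa_\epsilon(s',z)=\sum_{k=0}^{N}\frac{1}{k!}\,\chi(z)\,a_{\epsilon,k}\!\big(s',\tfrac{z}{|z|}\big)\,|z|^{\,l+k-1}+\rho_{\epsilon,N}(s',z).$$
For each $k$, the function $z\mapsto a_{\epsilon,k}(s',z/|z|)|z|^{l+k-1}$ is homogeneous of degree $l+k-1\ge -1$, so by Lemma \ref{differentiate the symbol} its Fourier transform is jointly smooth in $(s',\xi')$ for $\xi'\ne 0$, the $s'$-derivatives pass inside the transform, and by homogeneity it equals (away from $\xi'=0$) a smooth function of $(s',\xi'/|\xi'|)$ times $|\xi'|^{-(l+k+1)}$; the exceptional angular modes $e^{im\theta}$ with $l+k-1-|m|\in 2\mathbb Z_{\ge 0}$, for which the piece in question is a polynomial in $z$, are annihilated by $\chi$ up to a Schwartz error and are harmless. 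Hence the $k$-th summand defines a symbol in $S^{-(1+l)}_1(T^*\R^2)$, compactly supported in $s'$, whose seminorms are estimated by finitely many derivatives of $A_\epsilon$ --- i.e.\ uniformly in $\epsilon$. The remainder $\rho_{\epsilon,N}(s',\cdot)$ is compactly supported and of class $C^{N'}$ with $N'\to\infty$ as $N\to\infty$, all norms uniform in $\epsilon$, so $\F_z\rho_{\epsilon,N}(s',\cdot)$ lies in $S^{-N''}_1$ for some $N''\to\infty$ and may be absorbed by choosing $N$ large. Finally, for $|\xi'|\le 1$ one notes directly that $a_\epsilon$ together with all its $\xi'$- and $s'$-derivatives is bounded by quantities of the form $\|z^\alpha\partial_{s'}^\beta\kappa_\epsilon(s',\cdot)\|_{L^1}$, which are uniform in $\epsilon$. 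Summing over $k$ gives $a_\epsilon\in S^{-(1+l)}_1(T^*\R^2)$ with $\epsilon$-uniform seminorms, and the continuity theorem closes the argument.

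The main obstacle --- really the only point requiring care --- is the bookkeeping which keeps every constant uniform in $\epsilon$: one has to verify that each symbol seminorm of $a_\epsilon$ is controlled by a fixed finite collection of the (uniformly bounded) derivatives of $A_\epsilon$, and that the low-regularity Taylor remainder $\rho_{\epsilon,N}$ contributes only a smoothing operator, again with uniform bounds. The analytic heart of the matter --- the joint smoothness of the symbol away from $\xi'=0$ and the legitimacy of differentiating under the transform --- is exactly Lemma \ref{differentiate the symbol}, which is already available, so no genuinely new estimate is needed.
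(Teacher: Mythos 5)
Your argument is correct and follows essentially the same route as the paper's proof: Taylor-expand $A_\epsilon$ in the radial variable, identify each homogeneous term's Fourier transform via Lemma \ref{differentiate the symbol} as a symbol of order $-(l+k+1)$ with $\epsilon$-uniform seminorms, absorb the Taylor remainder as a smoothing error, and invoke the $L^2$--Sobolev continuity theorem (Calder\'on--Vaillancourt). The only cosmetic difference is that you place a spatial cutoff $\chi(z)$ on each Taylor term and control $|\xi'|\le 1$ by $L^1$ bounds, whereas the paper instead inserts a frequency cutoff and splits $\A_j=\A_j\chi(D)+\A_j(1-\chi(D))$, treating the low-frequency piece as smoothing; both handle the near-zero-frequency regime equivalently.
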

\begin{proof}
We prove the estimates only for Schwartz functions $f \in S(\R^2)$. We first expand 
\begin{eqnarray}
\label{expand A}
A_\epsilon =\sum_{j = 1}^N \partial_r^j A_\epsilon (s',0,\omega) r^j + r^{N+1} R_\epsilon (s',r,\omega).
\end{eqnarray}
All terms are smooth in its variables with derivatives uniformly bounded in $\epsilon$. Estimating the remainder term in \eqref{expand A} is easy:
$$\A_R f := \int_{S^1}\int_0^\infty r^{N+1+l} R_\epsilon (s',r,\omega) f(s'+ r\omega) = \int_{\R^2} |s'-t'|^{N+l} R_\epsilon\left(s',|s'-t'|, \frac{s'-t'}{|s'-t'|}\right) f(t') dt'.$$ 
For any positive integer $m$ we write $f = \langle D\rangle^{2m} \langle D\rangle^{-2m}f$. Choose $N>>m$ so that there is sufficient smoothness in the integral kernel to integrate by parts the formula
$$\A_R f = \int_{\R^2} |s'-t'|^{N+l} R_\epsilon \left(s',|s'-t'|, \frac{s'-t'}{|s'-t'|}\right)  \langle D_{t'}\rangle^{2m} \langle D_{t'}\rangle^{-2m} f(t') dt'. $$
We see from this that for a fixed positive integer $m$ we may choose $N$ large enough so that $\A_R : H^{-2m}(\R^2) \to H^{2m}(\R^2)$ is uniformly bounded in $\epsilon$.

For the integral involving the main term of \eqref{expand A}, we write
$$\A_j f := \int_{S^1} \int_0^\infty \partial_r^j A_\epsilon (s',0,\omega) r^{j+l} f(s' + r\omega).$$
Let $\chi\in C^\infty_c(\R^2)$ be $1$ near the origin and write 
\begin{eqnarray}
\label{Aj split}
\A_j f = \A_j \chi(D) f + \A_j (1-\chi(D)) f.
\end{eqnarray}
To see the mapping property of the first term of \eqref{Aj split} we write it out in Cartesian coordinates
$$ \A_j \chi(D) f  = \int_{\R^2} \partial_r^j A_\epsilon\left (s',0, \frac{t'}{|t'|}\right) |t'|^{j-1+l} (\check\chi*f)(s'-t') dt'.$$
Since $\partial_r^j A_\epsilon\left (s',0, \frac{t'}{|t'|}\right)$ is smooth in $s'$ with derivatives bounded uniformly in $\epsilon$ and $\chi(D)$ is smoothing, we have that $\A_j\chi(D) : H^{-m}(\R^2) \to H^{m}(\R^2)$ for any positive integer $m$ with bound uniform in $\epsilon$. 

The second term of \eqref{Aj split} is a pseudodifferential operator with full symbol 
$$a_j(s',\xi';\epsilon) : = (1-\chi(\xi')) \F_{t'} \left( \partial_r^j A_\epsilon\left (s',0, \frac{t'}{|t'|}\right) |t'|^{j-1+l}\right)$$ 
and away from $\xi = 0$ we can deduce from Lemma \ref{differentiate the symbol} that 
$$ \F_{t'} \left( \partial_r^j A_\epsilon\left (s',0, \frac{t'}{|t'|}\right) |t'|^{j-1+l}\right) = |\xi'|^{-j - l-1} \tilde a_j(s',\xi'/|\xi'|; \epsilon)$$
for some $\tilde a_j(s',\omega; \epsilon) \in C_c^\infty (\R^2\times S^1)$ with derivatives uniformly bounded in $\epsilon$. The operator $\langle D\rangle^{m+l+1} \A_j (1-\chi(D)) \langle D\rangle^{-m}$ then has full symbol in $S^0$ with symbol seminorms bounded uniformly in $\epsilon$. We can now apply Calder\'on-Vailancourt Theorem to deduce that $\A_j (1-\chi(D))$ is bounded uniformly in $\epsilon$ from $H^m(\R^2) \to H^{m+l+1}(\R^2)$.
\end{proof}

\subsection{ Symbol Computations}
Compute symbol by taking the Fourier transform and multiply by $2\pi$. We compute the principal symbols of some of the main operators which we will encounter. The following list of inverse Fourier transforms will be useful for later computations and we will leave its proof to the reader:

\begin{lemma}
\label{some FT}
In $\R^2$ with $\xi  = (\xi_1,\xi_2)$ and $x = (x_1, x_2)$ one has that for $|\xi|\geq 1$,\\
i) ${\mathcal F}^{-1}(\log|x|) (\xi)= -2\pi|\xi|^{-2}$,\ ${\mathcal F}^{-1}(|x|^{-1}) (\xi)= 2\pi|\xi|^{-1}$\\
ii) ${\mathcal F}^{-1}(x_j |x|^{-1}) (\xi)= 2\pi i \xi_j |\xi|^{-3}$\\
iii) ${\mathcal F}^{-1}(x_j^2 |x|^{-3})(\xi) = 2\pi \xi_k^2 |\xi|^{-3}$, $k\neq j$\\
iv) ${\mathcal F}^{-1}(x_j^2 |x|^{-2}) (\xi)=2\pi (\xi_k^2 - \xi_j^2) |\xi|^{-4}$, $k\neq j$.
\end{lemma}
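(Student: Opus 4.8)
The plan is to reduce everything to the two base formulas in (i) and then generate (ii)--(iv) from them by differentiating in $x$ and multiplying by $x_j$. I will work with the inverse transform $\F^{-1}u(\xi) = \int_{\R^2} e^{ix\cdot\xi} u(x)\,dx$ acting on tempered distributions, and use the operational rules $\F^{-1}(\partial_{x_j} u) = -i\xi_j\,\F^{-1}u$ and $\F^{-1}(x_j u) = -i\partial_{\xi_j}\F^{-1}u$, hence also $\F^{-1}(\Delta_x u) = -|\xi|^2\F^{-1}u$ and $\F^{-1}(|x|^2 u) = -\Delta_\xi\F^{-1}u$. All five distributions in the statement are tempered on $\R^2$, so their transforms are well defined; the formulas on the right are to be read for $|\xi|\ge 1$, i.e. away from $\xi=0$, since near the origin some of them (such as $|\xi|^{-2}$ and $|\xi|^{-4}$) are not locally integrable and the transform is determined only modulo a distribution supported at $\{\xi=0\}$, which is harmless here and for the symbol computations these identities feed.

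For the first formula in (i), I will use that $\tfrac{1}{2\pi}\log|x|$ is the fundamental solution of the planar Laplacian, so $\Delta_x\log|x| = 2\pi\,\delta_0$; applying $\F^{-1}$ and $\F^{-1}(\delta_0)=1$ gives $-|\xi|^2\F^{-1}(\log|x|) = 2\pi$, i.e. $\F^{-1}(\log|x|)(\xi) = -2\pi|\xi|^{-2}$ for $\xi\ne 0$. For the second, $|x|^{-1}$ is radial and homogeneous of degree $-1$, so its transform is radial and homogeneous of degree $-1$, hence equal to $c|\xi|^{-1}$ away from the origin; the value $c=2\pi$ is fixed by pairing both sides against a Gaussian (equivalently by the polar-coordinate computation $\int_{\R^2}|x|^{-1}e^{ix\cdot\xi}\,dx = 2\pi\int_0^\infty J_0(r|\xi|)\,dr = 2\pi|\xi|^{-1}$, with the $r$-integral made absolutely convergent by an Abel regularization).

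Then I will deduce (ii)--(iv) using the elementary pointwise identities
$$x_j|x|^{-1} = \partial_{x_j}|x|,\qquad x_j^2|x|^{-3} = -x_j\,\partial_{x_j}\big(|x|^{-1}\big),\qquad x_j^2|x|^{-2} = x_j\,\partial_{x_j}\log|x|,$$
together with the operational rules above and $\xi_j^2+\xi_k^2=|\xi|^2$. For (ii): since $|x| = |x|^2|x|^{-1}$, one gets $\F^{-1}(|x|)(\xi) = -\Delta_\xi\big(2\pi|\xi|^{-1}\big) = -2\pi|\xi|^{-3}$ (using $\Delta|\xi|^{-1}=|\xi|^{-3}$ in $\R^2$ for $\xi\ne0$), hence $\F^{-1}(x_j|x|^{-1}) = -i\xi_j\F^{-1}(|x|) = 2\pi i\,\xi_j|\xi|^{-3}$. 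For (iii): $\F^{-1}(x_j^2|x|^{-3}) = i\,\partial_{\xi_j}\big(-i\xi_j\cdot2\pi|\xi|^{-1}\big) = 2\pi\,\partial_{\xi_j}\big(\xi_j|\xi|^{-1}\big) = 2\pi\big(|\xi|^{-1}-\xi_j^2|\xi|^{-3}\big) = 2\pi\,\xi_k^2|\xi|^{-3}$. For (iv): $\F^{-1}(x_j^2|x|^{-2}) = -i\,\partial_{\xi_j}\big(-i\xi_j\cdot(-2\pi|\xi|^{-2})\big) = 2\pi\,\partial_{\xi_j}\big(\xi_j|\xi|^{-2}\big) = 2\pi\big(|\xi|^{-2}-2\xi_j^2|\xi|^{-4}\big) = 2\pi(\xi_k^2-\xi_j^2)|\xi|^{-4}$.

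The one point that needs care — and the main obstacle — is the bookkeeping of contributions at the origin: $\partial_{x_j}|x|$ and $\partial_{x_j}\log|x|$ could a priori carry atomic terms at $x=0$, and the homogeneous functions $|\xi|^{-2},|\xi|^{-4}$ must be regularized before they are genuine tempered distributions. Both are harmless here: $x_j/|x|$ and $x_j/|x|^2$ extend to honest locally bounded functions off the origin, and excising $\{|x|<\varepsilon\}$, integrating by parts, and letting $\varepsilon\to0$ shows the distributional derivatives produce no atom; and since the claims are only asserted for $|\xi|\ge1$, the regularized homogeneous distributions agree there with the explicit functions written down. The Fourier computations themselves are short; it is this origin-bookkeeping that should be verified carefully.
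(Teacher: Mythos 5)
The paper does not prove Lemma~\ref{some FT}; it states ``we will leave its proof to the reader,'' so there is no paper argument to compare against. Your proof correctly supplies the missing argument and follows the route one would expect: pin down the two transforms in (i) (fundamental solution of $\Delta$ in the plane for $\log|x|$; homogeneity plus a normalizing computation for $|x|^{-1}$), then generate (ii)--(iv) by the operational rules $\F^{-1}(\partial_{x_j}u)=-i\xi_j\F^{-1}u$ and $\F^{-1}(x_j u)=-i\partial_{\xi_j}\F^{-1}u$, and discard the ambiguity supported at $\xi=0$, which is immaterial since the lemma only asserts the formulas for $|\xi|\ge 1$. Every algebraic step checks out (I verified $\Delta|\xi|^{-1}=|\xi|^{-3}$ in $\R^2$, the sign bookkeeping in (ii)--(iv), and that the results agree with the way the lemma is invoked later in Lemmas~\ref{symbol of dg inverse}, \ref{symbol of normal derivative} and Proposition~\ref{principal term}). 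One small imprecision in your closing paragraph: $x_j/|x|^2$ is not locally bounded near the origin; what matters is that it is locally integrable in $\R^2$ (being $O(|x|^{-1})$), and for the one derivative that genuinely fails to be $L^1_{\rm loc}$, namely $\partial_{x_j}|x|^{-1}$, the excision-and-limit argument you describe shows that after multiplying by $x_j$ one lands back on the integrable function $-x_j^2|x|^{-3}$ with no atom, which is exactly what part (iii) needs.
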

\begin{Rem} Note that we ignore the behaviour of $\mathcal{F}^{-1}(\cdot)$ near $\xi = 0$ as they are irrelevant to the principal symbol computations we are interested in.
\end{Rem}
\begin{lemma}
\label{symbol of dg inverse}
Let $A$ be a pseudodifferential operator on $\partial M$ whose singularity along the diagonal is given by $d_g(x,y)^{-1}$. Then $\sigma(A)(x,\xi) \equiv 2\pi|\xi|^{-1}_{h(x)}$.
\end{lemma}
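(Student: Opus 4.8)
The plan is to compute the principal symbol by passing to a normal coordinate system centered at the point $x$ of interest and then applying the inverse Fourier transform formula \eqref{inverse transform}, which recovers the symbol from the restriction of the Schwartz kernel to the base point. First I would fix $x_0 \in \partial M$ and work in the $h$-geodesic coordinate system $t' \mapsto x(t'; x_0)$ constructed in Section \ref{diff geo}, in which the metric $h$ at the origin is the Euclidean metric $\delta_{jk}$ up to second order. In these coordinates, the Schwartz kernel of $A$ near the diagonal has the form $A(s',t') = d_g(x(s';x_0), x(t';x_0))^{-1} + (\text{smoother terms})$, where the smoother terms are lower order and do not affect the principal symbol. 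The computation of $\sigma_1(A)$ at $x_0$ reduces, by the recipe recalled after \eqref{principal symbol map}, to taking the inverse Fourier transform in $t'$ of $A(0, t')$ and extracting the leading homogeneous part (then multiplying by $2\pi$, per the convention noted at the start of Section 2.4).

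The key step is then Corollary \ref{1/dg} (or rather its specialization at $s' = 0$), which tells us that in these normal coordinates $d_g(x(0;x_0), x(t';x_0))^{-1} = |t'|^{-1} + O(|t'|)$, where the $O(|t'|)$ error vanishes to first order as $t' \to 0$. The error term, being $|t'|$ times something smooth in $(|t'|, t'/|t'|)$, contributes a symbol that is homogeneous of degree $-3$ — hence lower order and irrelevant to the principal symbol. So modulo lower order terms, $\sigma_1(A)(x_0, \xi)$ equals $2\pi$ times ${\mathcal F}^{-1}(|t'|^{-1})(\xi)$, which by Lemma \ref{some FT}(i) is $2\pi \cdot 2\pi |\xi|^{-1}$... wait — I should be careful with the normalization: with the Fourier-transform conventions of the paper, the recipe ``$\sigma(A)(x,\xi) = 2\pi \cdot {\mathcal F}^{-1}(A(x, \cdot))(\xi)$'' combined with Lemma \ref{some FT}(i) should be arranged to land exactly on $2\pi |\xi|^{-1}$. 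The point is that in the normal coordinates at $x_0$, $|\xi|$ is precisely $|\xi|_{h(x_0)}$ since $h(x_0) = \delta$; and since the formula $\sigma_1(A)(x,\xi) = 2\pi|\xi|_{h(x)}^{-1}$ is coordinate-invariant (both sides are invariantly defined functions on $T^*\partial M$) and $x_0$ was arbitrary, the identity holds at every point, which is the claim.

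The main obstacle is bookkeeping rather than conceptual: one must verify that the ``smoother terms'' alluded to above — both the genuine lower-order pieces of the kernel $A$ (which by hypothesis differs from $d_g^{-1}$ by something with a milder diagonal singularity) and the $O(|t'|)$ remainder in Corollary \ref{1/dg} — really do produce symbols in $S^{-2}_{cl}$ or lower, so that they drop out of the principal symbol computation modulo $S^{-1}_{cl}$. For the remainder term this follows from the homogeneity count above (degree $-1$ kernel singularity $\mapsto$ degree $-1$ symbol in $\R^2$; an extra factor of $|t'|$ lowers this by $1$), together with the smoothness in $(|t'|, t'/|t'|)$ which lets one apply a Lemma \ref{differentiate the symbol}-type argument to see the Fourier transform is classical of the right order. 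The only subtle point worth stating explicitly is that the error in Corollary \ref{1/dg} is not merely $O(|t'|)$ pointwise but is a smooth function of $(|t'|, t'/|t'|)$ vanishing to first order, so that its partial inverse Fourier transform is homogeneous of degree $-3$ away from $\xi = 0$; this is exactly the structure isolated in Lemma \ref{differentiate the symbol}. Granting that, the result is immediate from Lemma \ref{some FT}(i).
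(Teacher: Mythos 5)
Your proof is correct and follows the same route as the paper: compute in boundary normal coordinates at $x_0$, invoke Corollary \ref{1/dg} to reduce the kernel to $|t'|^{-1}+O(|t'|)$, and apply Lemma \ref{some FT}(i) to read off $2\pi|\xi|^{-1}_{h(x_0)}$. You add some useful bookkeeping (verifying the $O(|t'|)$ remainder yields a degree-$-3$ symbol, and noting the coordinate-invariance step explicitly) and correctly flag that the ``multiply by $2\pi$'' remark at the start of Section 2.4 is not actually applied in the paper's own computation — Lemma \ref{some FT}(i) already produces the $2\pi$ — so no further normalization is needed.
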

\begin{proof}
We compute the symbol at $x_0\in\partial M$ using the normal coordinate $t'\mapsto x(t';x_0)$. By Corollary \ref{1/dg} $d(x(t';x_0),x_0)^{-1} = |t'|^{-1}+ O(|t'|)$. By Lemma \ref{some FT}, the inverse Fourier transform of the leading order singularity is $2\pi|\xi|^{-1}$ for $\xi$ large and therefore $\sigma(A)(x,\xi) \equiv 2\pi|\xi|^{-1}_{h(x)}$.
\end{proof}
\begin{lemma}
\label{symbol of normal derivative}
Let $A$ be a pseudodifferential operator on $\partial M$ whose singularity near the diagonal is given by $\partial_{\nu_y}d_g(x,y)^{-1}\mid_{x,y\in \partial M\times \partial M}$. Then $\sigma(A)(x_0,\xi) \equiv -\pi\frac{\II_{x_0}(*\xi^\sharp)}{|\xi|_{h(x_0)}^{3}}$
where $\xi\mapsto \xi^\sharp$ denotes the musical isomorphism from $T^*\partial M$ to $T\partial M$ induced by the boundary metric $h$ and $*$ is the Hodge star operator in this metric. Here we use $\II_{x}(V)$ to denote the quadratic form $\II_{x}(V,V)$ for $V\in T_x\partial M$.
\end{lemma}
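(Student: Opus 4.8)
The plan is to compute the principal symbol at a fixed point $x_0 \in \partial M$ by working in the boundary normal coordinate system $t' \mapsto x(t'; x_0)$ centered at $x_0$, in which the shape operator is diagonalized, and then extracting the symbol via the inverse Fourier transform formula \eqref{inverse transform}. First I would invoke Lemma \ref{normal der of distance}, which gives that in this coordinate system, with $y = x(s'; x_0)$,
$$\partial_{\nu_y} d_g(x_0, y) = \frac{\lambda_1(x_0) s_1^2 + \lambda_2(x_0) s_2^2}{2|s'|} + O(|s'|^2),$$
the error being a smooth function of $(|s'|, s'/|s'|)$ vanishing to second order. Since we want the singularity of $\partial_{\nu_y} d_g(x,y)^{-1}$ rather than of $d_g(x,y)$ itself, the next step is to combine this with the chain rule identity $\partial_{\nu_y} d_g(x,y)^{-1} = - d_g(x,y)^{-2} \partial_{\nu_y} d_g(x,y)$ and Corollary \ref{1/dg} (which gives $d_g(x_0, x(s'; x_0))^{-1} = |s'|^{-1} + O(|s'|)$), so that the leading singular term of $\partial_{\nu_y} d_g(x_0, y)^{-1}$ in these coordinates is
$$- |s'|^{-2} \cdot \frac{\lambda_1 s_1^2 + \lambda_2 s_2^2}{2|s'|} = -\frac{1}{2}\left(\lambda_1 \frac{s_1^2}{|s'|^3} + \lambda_2 \frac{s_2^2}{|s'|^3}\right),$$
with a remainder that is less singular (a smooth function of $(|s'|, s'/|s'|)$ vanishing to higher order at the origin), hence contributes to a lower-order symbol which does not affect the principal part.

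Having isolated the leading homogeneous-degree-$(-3)$ singularity, I would then apply Lemma \ref{some FT}(iii): $\mathcal{F}^{-1}(x_j^2 |x|^{-3})(\xi) = 2\pi \xi_k^2 |\xi|^{-3}$ for $k \neq j$. Thus the principal symbol, using the convention from the subsection header that one computes the symbol by taking the inverse Fourier transform (and remembering the formula \eqref{inverse transform} for recovering the symbol from the Schwartz kernel at the origin), becomes
$$-\frac{1}{2}\left(\lambda_1 \cdot 2\pi \frac{\xi_2^2}{|\xi|^3} + \lambda_2 \cdot 2\pi \frac{\xi_1^2}{|\xi|^3}\right) = -\pi \frac{\lambda_1 \xi_2^2 + \lambda_2 \xi_1^2}{|\xi|^3}.$$
The final step is to recognize this expression invariantly. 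In the chosen coordinates the boundary metric $h$ at $x_0$ is Euclidean (normal coordinates), the Hodge star $*$ on $1$-forms rotates $(\xi_1, \xi_2)$ to $(-\xi_2, \xi_1)$, and the second fundamental form is diagonal with $\II_{x_0}(\partial_1) = \lambda_1$, $\II_{x_0}(\partial_2) = \lambda_2$. Therefore $\II_{x_0}(*\xi^\sharp) = \II_{x_0}(-\xi_2 \partial_1 + \xi_1 \partial_2) = \lambda_1 \xi_2^2 + \lambda_2 \xi_1^2$, and $|\xi|^3 = |\xi|_{h(x_0)}^3$, giving $\sigma(A)(x_0, \xi) \equiv -\pi \II_{x_0}(*\xi^\sharp)/|\xi|_{h(x_0)}^3$ as claimed; since $x_0$ was arbitrary and the principal symbol is coordinate-invariant, this holds at every point.

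I expect the main obstacle to be the bookkeeping in the second step: carefully verifying that after forming $\partial_{\nu_y} d_g^{-1} = -d_g^{-2}\,\partial_{\nu_y} d_g$ and multiplying out the expansions from Lemma \ref{normal der of distance} and Corollary \ref{1/dg}, every error term is genuinely a lower-order symbol — i.e. that the cross terms between the $O(|s'|^2)$ tail of $\partial_{\nu_y} d_g$ and the singular $|s'|^{-2}$ factor, and between the $O(|s'|)$ tail of $d_g^{-1}$ and the degree-$2$ numerator, all produce kernels whose inverse Fourier transforms decay at least one power of $|\xi|$ faster than $|\xi|^{-3}$. This requires being a little careful that "smooth function of $(|s'|, s'/|s'|)$ vanishing to order $k$" translates into "symbol of order $-1-k$" after the partial Fourier transform, which is exactly the content underlying Lemmas \ref{differentiate the symbol} and \ref{uniform operator bound}; modulo invoking those, the computation is routine.
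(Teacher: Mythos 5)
Your proof is correct and takes essentially the same route as the paper: use Lemma \ref{normal der of distance} and Corollary \ref{1/dg} in the boundary normal coordinate system at $x_0$, isolate the homogeneous-degree-$(-3)$ leading singularity $-\tfrac{\lambda_1 s_1^2+\lambda_2 s_2^2}{2|s'|^3}$, take the inverse Fourier transform via Lemma \ref{some FT}(iii), and recognize the result as $-\pi\,\II_{x_0}(*\xi^\sharp)/|\xi|_h^3$. You are in fact a bit more explicit than the paper: the paper's one-line claim that "the leading order singularity of $\partial_{\nu_y}d_g(x_0, y)$ is $-\tfrac{\lambda_1 s_1^2+\lambda_2 s_2^2}{2|s'|^3}$" silently incorporates the chain rule $\partial_{\nu_y}(d_g^{-1}) = -d_g^{-2}\,\partial_{\nu_y}d_g$ (note the displayed text is missing the inverse on $d_g$, apparently a typo), which you spell out.
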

\begin{proof}
Using Lemma \ref{normal der of distance} we see that in normal coordinates given by $x(s';x_0)$ the leading order singularity of 
$\partial_{\nu_y}d_g(x_0, y = x(s';x_0))$ is given by $-\frac{\lambda_1 s_1^2 + \lambda_2 s_2^2}{2|s'|^3}$. By Lemma \ref{some FT} we have that 
$${\mathcal F}^{-1}\left(-\frac{\lambda_1 s_1^2 + \lambda_2 s_2^2}{2|s'|^3}\right)= - \pi\frac{\lambda_1 \xi_2^2 + \lambda_2 \xi_1^2}{|\xi|^3} .$$
This is precisely the normal coordinate expression for $-\pi\frac{\II_{x_0}(*\xi^\sharp)}{|\xi|_{h(x_0)}^{3}}$.
\end{proof}

\begin{prop}
\label{principal term}
Let $H(x)$ denote the mean curvature of $\partial M$ at $x$, $\II_x$ the second fundamental form of $\partial M$ at $x\in \partial M$, and $\II_x(V) := \II_x(V,V)$ for $V\in T_x\partial M$. Define
{\small \begin{eqnarray}
\label{eq principal term}
%&&\int_{\partial \Omega} d_g(y,z)^{-1} \partial_{\nu_z} d_g(z,x)^{-1} d{\rm vol}_{\partial M}(z) =\\\nonumber
\quad \quad K(x,y) := {H(x)\pi} \log d_{h}(y,x) - \frac{\pi}{4} \left(\II_x\left (\frac{\exp_x^{-1} (y)}{|\exp_x^{-1} (y)|_h}\right) - \II_x\left (\frac{*\exp_x^{-1} (y)}{|\exp_x^{-1} (y)|_h}\right)\right),\end{eqnarray}}
where $*$ is the Hodge star operator for the metric $h$. Let $A : C^\infty(\partial M) \to {\mathcal D}'(\partial M)$ be the operator defined by 
$$A : u \mapsto \int_{\partial M}K (x,y) u(y) {\rm dvol}_{h}.$$
Then $A \in \Psi_{cl}^{-2}(\partial M)$ with principal symbol $a(x,\xi)\in C^{\infty}(T^*\partial M\backslash \{0\})$ given by 
$$a(x,\xi) \equiv -\frac{2\pi^2}{|\xi|_h^4}\II_x(*\xi^\sharp) ,$$
where $\xi\mapsto \xi^\sharp \in T\partial M$ denotes the raising of index with respect to the metric $h$ on $\partial M$.
\end{prop}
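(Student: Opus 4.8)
The plan is to recognise $A$ as a classical pseudodifferential operator of order $-2$ and then compute its principal symbol at an arbitrary point $x_0\in\partial M$ by restricting the kernel $K$ to $x=x_0$, passing to the $h$-geodesic normal coordinates $x(\cdot;x_0)$ of Section~\ref{diff geo}, and reading off the inverse Fourier transform from the list in Lemma~\ref{some FT}, exactly as in the proofs of Lemmas~\ref{symbol of dg inverse} and \ref{symbol of normal derivative}.

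First I would note that $K(x,y)$ is smooth off the diagonal, so after multiplication by a cut-off supported near ${\rm Diag}$ --- which changes $A$ only by a smoothing operator --- it suffices to analyse $A$ near ${\rm Diag}$. There $K$ has the form
$$K(x,y)=H(x)\,\pi\,\log d_h(x,y)+b\!\left(x,\frac{\exp_{x;h}^{-1}(y)}{|\exp_{x;h}^{-1}(y)|_h}\right),\qquad b(x,v):=-\frac{\pi}{4}\big(\II_x(v)-\II_x(*v)\big),$$
with $b$ smooth on the unit tangent bundle of $\partial M$. By Lemma~\ref{dist expression}, $d_h(x,y)^2$ is $|x-y|^2$ times a smooth positive-definite quadratic form in local coordinates, so $\log d_h(x,y)=\log|x-y|$ plus terms homogeneous of degree $\ge 1$ in $x-y$ (and smooth terms); and by Corollary~\ref{R infty expression} the $b$-term is homogeneous of degree $0$ in $x-y$ with smooth coefficients modulo an $O(|x-y|)$ error. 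Thus, near the diagonal, $K(x,y)$ is a classical polyhomogeneous conormal distribution whose leading pieces (a $\log$-term and a degree-$0$ homogeneous term) both sit at symbol order $-2$ in dimension two; by the standard kernel characterisation of classical pseudodifferential operators (Chapter~7 of \cite{taylor2}, or \cite{taylorpdo}) this gives $A\in\Psi^{-2}_{cl}(\partial M)$.

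For the principal symbol I fix $x_0$ and use the coordinate $x(\cdot;x_0)$, in which $h_{\alpha\beta}(0)=\delta_{\alpha\beta}$ and $\{\partial_{t_1},\partial_{t_2}\}$ diagonalise the shape operator at $x_0$ with eigenvalues $\lambda_1,\lambda_2$. By the very definition of this coordinate, $\exp_{x_0;h}^{-1}(x(s';x_0))=s_1E_1+s_2E_2$ and $d_h(x_0,x(s';x_0))=|s'|$, so that exactly
$$K(x_0,x(s';x_0))=H(x_0)\,\pi\,\log|s'|-\frac{\pi}{4}(\lambda_1-\lambda_2)\,\frac{s_1^2-s_2^2}{|s'|^2}.$$
The volume density contributes a factor $\sqrt{\det h(s')}=1+O(|s'|^2)$, whose effect (like that of the cut-off) is of symbol order strictly below $-2$ and therefore does not enter the principal part. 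Hence by \eqref{inverse transform}, Lemma~\ref{some FT}(i) (giving ${\mathcal F}^{-1}(\log|s'|)=-2\pi|\xi|^{-2}$) and Lemma~\ref{some FT}(iv) (giving ${\mathcal F}^{-1}\big((s_1^2-s_2^2)/|s'|^2\big)=4\pi(\xi_2^2-\xi_1^2)|\xi|^{-4}$),
\begin{align*}
a(x_0,\xi)&\equiv H(x_0)\,\pi\cdot(-2\pi|\xi|^{-2})-\frac{\pi}{4}(\lambda_1-\lambda_2)\cdot 4\pi(\xi_2^2-\xi_1^2)|\xi|^{-4}\\
&=-2\pi^2 H(x_0)|\xi|^{-2}-\pi^2(\lambda_1-\lambda_2)(\xi_2^2-\xi_1^2)|\xi|^{-4}.
\end{align*}
Finally, using $H(x_0)=\tfrac12(\lambda_1+\lambda_2)$ and noting that at $x_0$, in these coordinates, $*\xi^\sharp=-\xi_2\partial_{t_1}+\xi_1\partial_{t_2}$ up to a sign (immaterial, $\II$ being a quadratic form), so that $\II_{x_0}(*\xi^\sharp)=\lambda_1\xi_2^2+\lambda_2\xi_1^2$, one checks
$$2H(x_0)|\xi|^2+(\lambda_1-\lambda_2)(\xi_2^2-\xi_1^2)=2\big(\lambda_1\xi_2^2+\lambda_2\xi_1^2\big)=2\,\II_{x_0}(*\xi^\sharp),$$
so $a(x_0,\xi)\equiv -\tfrac{2\pi^2}{|\xi|^4}\II_{x_0}(*\xi^\sharp)$. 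Since $x_0$ was arbitrary, the principal symbol is coordinate-invariant, and both sides are coordinate expressions of an invariantly defined homogeneous-degree-$(-2)$ function on $T^*\partial M\setminus 0$, this is the asserted formula for $a(x,\xi)$.

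The hardest step will be the first one: establishing that $A$ is a \emph{classical} pseudodifferential operator of order exactly $-2$, and not merely an operator carrying the stated leading singularity. This relies on the polyhomogeneity (with logarithmic terms) of $\log d_h$ and of the curvature kernels along the diagonal; everything after it is bookkeeping of lower-order error terms --- handled by the normal-coordinate expansions of Section~\ref{diff geo}, or alternatively by the uniform operator estimates of Lemmas~\ref{differentiate the symbol} and \ref{uniform operator bound} --- combined with the explicit inverse Fourier transforms of Lemma~\ref{some FT}.
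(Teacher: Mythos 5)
Your proof is correct and follows essentially the same route as the paper: verify membership in $\Psi^{-2}_{cl}(\partial M)$ via the polyhomogeneous kernel characterisation (Chapter~7 of Taylor), then compute the principal symbol at a fixed $x_0$ in $h$-geodesic normal coordinates that diagonalise the shape operator, using the inverse Fourier transforms of Lemma~\ref{some FT}. The only cosmetic difference is that you combine $\tfrac{\lambda_1+\lambda_2}{2}$ into $H(x_0)$ before taking the Fourier transform and add a few words about the cutoff and the volume density being lower order, but the decomposition, coordinates, and calculation are those of the paper.
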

\begin{proof}
To see that $A$ is a classical $\Psi$DO, we use Lemma \ref{dist expression} and Corollary \ref{R infty expression} to see that the coordinate expression for the integral kernel $K(x,y)$ satisfies the polyhomogeneous conditions of Prop 2.8 in Chapter 7 of \cite{taylor2}. Therefore $A\in \Psi^{-2}_{cl}(\partial M)$.

The principal symbol computation is done using normal coordinates. Fix $x_0\in \partial M$ and denote by 
$$t'\mapsto x(t' ; x_0) := \exp_{x_0; h}(t')$$
the normal coordinate system around $x_0$. By a rotation we can choose the coordinates so that $\partial_{t_j} x(0;x_0) \in T_{x_0}\partial M$ is an eigenvector of the shape operator at $x_0$ with eigenvalue $\lambda_j$. 

According to Lemma \ref{dist expression} and Corollary \ref{R infty expression}, in these coordinates the terms of $K(x_0,y)$ can be expressed as
\begin{eqnarray}
K(x_0, x(t';x_0)) = \frac{\lambda_1  +\lambda_2}{2}\pi \log|t'| - \frac{\pi}{4}\left(  \frac{\lambda_1 t_1^2 + \lambda_2t_2^2}{|t'|^2} - \frac{\lambda_1 t_2^2 + \lambda_2 t_1^2}{|t'|^2}\right)
\end{eqnarray}
for $t'$ close to the origin. Computing the principal symbol $a(x_0,\xi)$ amounts to taking the inverse Fourier transform of the above expression, and observe the behaviour as $|\xi|\to \infty$. Use the formula in Lemma \ref{some FT} , we obtain
$$a(x_0,\xi) \equiv -\frac{2\pi^2}{|\xi|^4} (\lambda_1 \xi_2^2 + \lambda_2 \xi_1^2) = -\frac{2\pi^2}{|\xi|_h^4}\II_x(*\xi^\sharp) .$$
The last equality holds due to the fact that we are using normal coordinates.
\end{proof}

\end{section}

\begin{section}{Proof of Proposition \ref{prop G expansion}}

In this section we use layer potential methods to pick out the singularity structure of the Neumann Green's function at the boundary. Assume without loss of generality that $M$ is an open subset of a compact Riemannian manifold $(\tilde M, g)$ without boundary. Choose $M' \subset \tilde M$ a manifold with boundary which compactly contains $M$. For all $F\in C^\infty_0(M')$, standard elliptic theory shows that there exists a unique solution $U_F\in C^\infty(M')$ to
$$\Delta_g U_F = F,\ \ U_F\mid_{\partial M'} = 0.$$
The map $F\mapsto U_F$ is a continuous linear operator from $C^\infty_0(M') \to {\mathcal D}'(M')$ and is therefore given by a Schwartz kernel $E(x,y) \in {\mathcal D}'(M'\times M')$ which we call the Green's function. Note that for any $u\in C_0^\infty(M')$, if we fix $x\in M'$ then by definition
$$  u(x) = \int_{M'} \Delta_gu (y) E(x,y) d{\rm vol}_g(y) = \langle u(\cdot), \Delta_g E(x,  \cdot)\rangle.$$
We formally write
 \begin{eqnarray}
 \label{global green}
 \Delta_g E(x,\cdot) = \delta_{x}(\cdot)\ \ {\rm on}\ \ M'
 \end{eqnarray}
Note that if $M$ is a bounded domain in $\R^3$ then $\tilde M$ can be chosen to be the flat torus and $E(x,y)$ can be chosen to be $\frac{-1}{4\pi|x-y|}$ for the appropriate constant $c\in \R$.

Using standard elliptic parametrix construction in normal coordinates we express $E(x,y)$ in the following way.

\begin{lemma}

For all $x,y\in M'$
	\begin{eqnarray} 
	\label{principal of E}
E(x,y) =-  \frac{1}{4\pi} d_g(x,y)^{-1} + \Psi_{cl}^{-4}( \tilde M).
	\end{eqnarray}
Here $\Psi_{cl}^{-4}( \tilde M)$ denotes the Schwartz kernel of an operator in $\Psi_{cl}^{-4}( \tilde M)$.
\end{lemma}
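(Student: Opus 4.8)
The plan is to use the standard interior elliptic parametrix construction for $\Delta_g$ on the closed manifold $\tilde M$, carried out in geodesic normal coordinates centered at an arbitrary point, and then to identify the leading symbol with the Fourier transform of $d_g(x,y)^{-1}$. First I would recall that $\Delta_g \in \Psi^2_{cl}(\tilde M)$ is elliptic, so there exists a parametrix $Q \in \Psi^{-2}_{cl}(\tilde M)$ with $Q\Delta_g = I + S$ for some smoothing operator $S$. Since $E$ is (a version of) the true inverse of $\Delta_g$ composed with the solution operator of the Dirichlet problem on $M'$, the difference $E - Q$ differs from a smoothing kernel by a harmless term, so it suffices to compute the full symbol of $Q$, or rather its principal part, and then to show that $Q$ minus the operator with kernel $-\frac{1}{4\pi}d_g(x,y)^{-1}$ lies in $\Psi^{-4}_{cl}$.

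The key computation is the symbol of $Q$ at a fixed point $x_0$ using normal coordinates $t \mapsto x(t;x_0)$. In such coordinates $g_{jk}(0) = \delta_{jk}$ and the principal symbol of $\Delta_g$ is $-|\xi|_g^2 = -\sum g^{jk}(t)\xi_j\xi_k$, so the principal symbol of $Q$ is $-|\xi|_g^{-2}$, which at $x_0$ equals $-|\xi|^{-2}$ in these coordinates. By the symbol-to-kernel correspondence (equation \eqref{inverse transform} in the excerpt), the Schwartz kernel singularity of an operator with principal symbol $-(2\pi)^{-n}|\xi|^{-2}$ in $\R^3$, convention-adjusted, is a constant multiple of $|t'|^{-1}$; matching constants one gets exactly $-\frac{1}{4\pi}|x-y|^{-1}$ in these normal coordinates, which is $-\frac{1}{4\pi}d_g(x,y)^{-1}$ since in geodesic normal coordinates the Euclidean distance to the center agrees with $d_g$. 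This explains the stated leading term. One then must argue that the remainder — the kernel of $Q$ with this leading piece removed, together with $E - Q$ — is the kernel of a $\Psi^{-4}_{cl}$ operator. For this I would invoke the polyhomogeneous structure of the full symbol of $Q$: the next homogeneous term in the symbol expansion is homogeneous of degree $-3$ in $\xi$, and by Lemma \ref{some FT}-type computations (or directly, by the general correspondence between homogeneity degrees of symbols and kernels) it contributes a kernel that is already as regular as the kernel of a $\Psi^{-3}_{cl}$ operator; more importantly, in normal coordinates the degree $-3$ symbol term actually vanishes at the center because $\partial_t g_{jk}(0) = 0$, pushing the first genuine correction down to the $\Psi^{-4}_{cl}$ level. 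Finally $E - Q$ is smoothing, hence certainly in $\Psi^{-4}_{cl}$.

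The main obstacle, and the only place requiring care, is the bookkeeping in the parametrix expansion: showing that the subleading symbol term of $Q$ in normal coordinates genuinely vanishes to the order needed so that the error is $\Psi^{-4}_{cl}$ rather than merely $\Psi^{-3}_{cl}$. This is a consequence of the normal-coordinate identity $g_{jk}(t) = \delta_{jk} + O(|t|^2)$, which kills the would-be $O(|\xi|^{-3})$ contribution to the symbol of $Q$; equivalently, the odd-order terms in the homogeneous expansion of the kernel about the diagonal are absent by parity, as in the parity arguments already used in Lemmas \ref{dist expression} and \ref{eucl norm}. Once this vanishing is noted, the statement follows by assembling: $E = Q + (\text{smoothing})$, $Q = (\text{operator with kernel } -\frac{1}{4\pi}d_g^{-1}) + \Psi^{-4}_{cl}$. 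I would also remark that none of this uses the boundary of $M$, since we work on the closed manifold $\tilde M$ and $E$ is only modified from the true $\Delta_g$-inverse by a term smooth up to $\partial M'$.
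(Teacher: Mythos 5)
Your proposal is in essence the same argument as the paper's: both reduce to a parametrix on the closed manifold $\tilde M$ (so the boundary of $M'$ plays no role), both compute in geodesic normal coordinates centered at the point under consideration, and both exploit $g_{jk}(t)=\delta_{jk}+O(|t|^2)$ to kill the would‑be $\Psi^{-3}_{cl}$ error. The paper organizes this differently: rather than computing the symbol of a parametrix $Q$ and matching it against the Fourier transform of $d_g^{-1}$, it defines $P_{-2}$ explicitly with kernel $-\frac{\chi(x,y)}{4\pi}d_g(x,y)^{-1}$ and shows directly, via the composition formula, that $\sigma_{-1}(P_{-2}\Delta_g - I)=0$; elliptic regularity then yields $P-P_{-2}\in\Psi^{-4}_{cl}$. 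The two routes are logically equivalent, and yours is fine at the level of ideas.

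Where your write‑up is looser than it should be: you never mention the Riemannian volume factor. The Schwartz kernel of $P_{-2}$ \emph{with respect to Lebesgue measure} in the coordinate chart is $-\frac{1}{4\pi|t|}\sqrt{|g(t)|}$, not $-\frac{1}{4\pi|t|}$. The leading symbol computation only gives exactly $|\xi|^{-2}$ because $|t|^{-1}$ is exactly homogeneous \emph{and} $\sqrt{|g(t)|}=1+H_2(t)$ with $H_2$ vanishing to second order; the contribution $\frac{H_2(t)}{|t|}=O(|t|)$ is what lands in $S^{-4}_{cl}$. This is not a parity statement — the degree‑$0$ piece of the kernel about the diagonal is absent because of the second‑order vanishing, not because of evenness/oddness — and your ``odd‑order terms absent by parity'' remark would not by itself rule out the problematic degree‑$0$ term. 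Similarly, your opening claim that ``it suffices to compute the principal part'' is false as stated (principal symbols only control the error modulo $\Psi^{-3}_{cl}$), though you implicitly correct this in the next paragraph. Finally, on the parametrix side, what vanishes at the center is not just $\partial_t g_{jk}(0)$ feeding the subprincipal symbol: one must also check that the first‑order term in the symbolic composition, $-i\,\partial_\xi a\cdot\partial_{t}b$, produces only an $S^{-2}_{cl}$ contribution, which again uses $\partial_t g(0)=0$. If you tighten these three points — the Jacobian factor, the correct reason for absence of the degree‑$0$ kernel term, and the composition bookkeeping — your argument is a correct proof and matches the paper's.
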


\begin{proof}

Let $P\in \Psi_{cl}^{-2}(\tilde M)$ be a parametrix for the $\Delta_g$ on the closed compact manifold $\tilde M$ without boundary meaning that
$$ \Delta_g P = I + \Psi^{-\infty}(\tilde M).$$
By ellipticity, for any $\chi_0\in C^\infty_c(M')$ we have 
$$\chi_0(x)\chi_0(y)(E(x,y) - P(x,y)) \in \Psi^{-\infty}(\tilde M).$$

Therefore it suffices to show that 
\begin{eqnarray}
\label{parametrix is the same}
P - P_{-2} \in \Psi_{cl}^{-4}(\tilde M)
\end{eqnarray} 
where $P_{-2}\in \Psi^{-2}(\tilde M)$ is defined by
$$(P_{-2} u)(x) := \int_{\tilde M} \frac{-\chi(x,y)}{4\pi} d_g(x,y)^{-1}u(y) d{\rm vol}_g(y)$$
for some smooth function $\chi(\cdot,\cdot) \in C^\infty(\tilde M \times \tilde M)$ satisfying $\chi(x,y) = \chi(y,x)$, $\chi(x,y) = 1$ if $d_g(x,y) <{\rm Inj}_{\tilde M}/4$ and 
$$\supp(\chi(\cdot,\cdot)) \subset\subset \{(x,y)\mid d_g(x,y) < {\rm Inj}_{\tilde M}/2\}.$$
Here ${\rm Inj}_{\tilde M}$ is the injectivity radius of the closed compact Riemannian manifold $(\tilde M, g)$.

By elliptic regularity \eqref{parametrix is the same} is equivalent to showing that
\begin{eqnarray}
\Delta_g P_{-2} - I \in \Psi_{cl}^{-2}(\tilde M).
\end{eqnarray}
Taking the adjoint  and use the self-adjointness of both $\Delta_g$ and $P_{-2}$, this is the same as
\begin{eqnarray}
P_{-2}\Delta_g  - I \in \Psi_{cl}^{-2}(\tilde M).
\end{eqnarray}

Using the principal symbol map defined in \eqref{principal symbol map} it amounts to showing that
\begin{eqnarray}
\label{-1 symbol vanishes}
\sigma_{-1}(P_{-2}\Delta_g  - I) = 0
\end{eqnarray}
as an element of the quotient space $S^{-1}_{cl}/S^{-2}_{cl}$. In fact, since the symbol is classical, we now choose $\sigma_{-1}(P_{-2} \Delta_g - I)$ to be the representative in the equivalence class which is positively homogeneous of degree $-1$.

For each $y_0\in \tilde M$ and covector $\eta_0 \in S^*_{y_0}\tilde M$ in the unit cosphere bundle we will show that 
\begin{eqnarray}
\label{homogeneous bound sym}
|\sigma_{-1}(P_{-2}\Delta_g  -I)(y_0, \tau \eta_0)| \leq C_{y_0,\eta_0} \tau^{-2}
\end{eqnarray}
as $\tau \to \infty$. Homogeneity would then ensure that $\sigma_{-1}(P_{-2}\Delta_g  -I)(y,  \eta) = 0$ for all $(y,\eta)\in T^*\tilde M$ which would then ensure \eqref{-1 symbol vanishes}.

To this end let $V_1,V_2,V_3\in S_{y_0} \tilde M$ be three orthonormal vectors and choose normal coordinate given by $\Phi(t) := \exp_{y_0}(\sum_{j=1}^3 t_j V_j)$ for $|t|_{\R^3} <{\rm Inj}_{\tilde M}/2$. Let $\chi_{\R^3}\in C_c^\infty(\R^3)$ take the value $1$ in an open set containing the origin but $\supp(\chi_{\R^3})\subset\subset \{t\in \R^3\mid |t|_{\R^3} <{\rm Inj}_{\tilde M}/2\}$. Similarly let $\chi_{\tilde M}\in C^\infty(\R^3)$ take the value $1$ in an open set containing $y_0$ but $\supp(\chi_{\tilde M})\subset\subset \{x\mid d_g(x,y_0) <{\rm Inj}_{\tilde M}/2\}$.

Define the pullback operators $A, B : C^\infty(\R^3) \to C^\infty(\R^3)$ by
$$ A : u\mapsto  \Phi_* \left(\chi_{\tilde M} P_{-2} \Phi^* \left(\chi_{\R^3} u\right)\right) ,\ B : u\mapsto  \Phi_* \left(\chi_{\tilde M} \Delta_g \Phi^* \left(\chi_{\R^3} u\right)\right)$$ 
where $\Phi^*$ and $\Phi_*$ are pullback by $\Phi$ and $\Phi^{-1}$ respectively.

Thanks to the invariance of the principal symbol map under symplectomorphism, we have
\begin{eqnarray}
\label{change of var sym}
\sigma_{-1}(P_{-2}\Delta_g - I)(y_0,\Phi^*\xi) = \sigma_{-1}(AB-I) (0, \xi)
\end{eqnarray}
for all $\xi\in T^*_0\R^3$. We see then that \eqref{-1 symbol vanishes} amounts to showing that $AB-I$ satisfies
\begin{eqnarray}
\label{full symbol calculation}
\sigma_{-1}(AB-I)(0,\xi) = 0.
\end{eqnarray}
Let $a(t,\xi)$ and $b(t,\xi)$ be the full symbol of $A$ and $B$ respectively. The full symbol of $A$ can be computed by the formula
$$a(t,\xi) = e_{-\xi}(t) \left( A e_{\xi}\right)(t)$$
where $e_\xi(t) := e^{-it\cdot \xi}$. 
Since we are using the normal coordinate around $y_0$, $d_g(y_0,\Phi(t)) = |t|_{\R^3}$ and 
$$\Phi_* d{\rm Vol}_g = \sqrt{|g|}dt =  dt +H_2(t) dt$$ 
where $H_2(t)$ is a smooth function vanishing to order $2$ at $ t= 0$. So
$$(Ae_{\xi})(0) = \int_{\R^3} \frac{-4\pi}{|t|}e^{-it\cdot \xi} dt + \int_{\R^3} \frac{c}{|t|}e^{-it\cdot \xi}\tilde H_2(t)dt$$
for some smooth and compactly supported function $\tilde H_2(t)$ vanishing to order $2$ at $t=0$. Computing the first term directly and treat the second term by expanding $H_2(t)$ using Taylor expansion we see that
\begin{eqnarray}
\label{a symbol}
a(0,\xi) = (Ae_{\xi})(0) = |\xi|^{-2} + S_{cl}^{-4}(T^*\R^3).
\end{eqnarray}
Since $B$ is the Laplace operator in the coordinate given by $\Phi$ we have that 
\begin{eqnarray}
\label{b symbol}
 b(t,\xi) = \sum_{j,k=1}^3 g^{j,k}(t) \xi_j\xi_k + \frac{1}{\sqrt{|g|}} \sum_{j,k=1}^3 \partial_{t_j} (\sqrt{|g|}g^{j,k})(t) \xi_k
\end{eqnarray}
Composition calculus gives that if $c(x,\xi)$ is the full symbol of the operator $AB$ then 
\begin{eqnarray}
\label{composition c}
c(0,\xi) = a(0,\xi) b(0,\xi) + -i \sum_{j=1}^3 \partial_{\xi_j} a(0,\xi) \partial_{t_j} b(t,\xi)\mid_{t=0} + S_{cl}^{-2}.
\end{eqnarray}
Substituting into \eqref{composition c} the expression we have in \eqref{a symbol}, \eqref{b symbol}, and the fact that in normal coordinates $g^{j,k}(t) = \delta^{j,k} + O(|t|^2)$ for $t$ in a neighbourhood of the origin we have that the second term in \eqref{composition c} drops out. So the full symbol of $AB - I$ at the point $(0,\xi)\in T^*\R^3$ is 
$$c(0,\xi) -1 \in S_{cl}^{-2}.$$
In light of \eqref{change of var sym}, for each fixed $\xi \in T^*_{y_0}\tilde M$,
$$|\sigma_{-1}(P_{-2}\Delta_g - I)(y_0,\tau \Phi^*\xi)| <C_{\xi} \tau^{-2}$$
as $\tau\to \infty$.
Therefore \eqref{homogeneous bound sym} is verified.

\end{proof}

For all $f\in C^\infty(\partial M)$ we define as in \cite{taylor2} the operators $S, N \in \Psi_{cl}^{-1}(\partial M)$ by the following
\begin{eqnarray}
\label{S and N}
Sf(x) := \int_{\partial M} E(x,y)f(y){\rm vol}_h,\ \ Nf(x) := 2\int_{\partial M} \partial_{\nu_y} E(x,y) f(y){\rm vol}_h
\end{eqnarray}
for $x \in \partial M$. Note that $Nf(x)$ is different from (see \cite{taylor2} Chapt 7 Sect 11)
$$\lim_{x \to \partial M, x\in M} 2\int_{\partial M}  \partial_{\nu_y} E(x,y) f(y) dy = f(x) + Nf(x).$$

Modulo lower order pseudodifferential operator, $S$ and $N$ are given by the integral kernels $d_g(x,y)^{-1}$ and $\partial_{\nu_y} d_g(x,y)^{-1}$ respectively. Indeed, using \eqref{principal of E} and equation (11.14) on page 38 of \cite{taylor2}, we see that for $(x,y)\in \partial M\times \partial M$ in a neighbourhood of the diagonal,
\begin{eqnarray}
\label{principal of S and N}
S = -\frac{1}{4\pi} d_g(x,y)^{-1} + \Psi_{cl}^{-3}(\partial M),\ \ N = -\frac{1}{2\pi} \partial_{\nu_y} d_g(x,y)^{-1} + \Psi_{cl}^{-2}(\partial M) .
\end{eqnarray}

Using \eqref{global green} we can construct the so called {\em Neumann Green's function} on $M$ via the following procedure. For each fixed $x\in M^o$ we can solve the following Neumann boundary value problem to obtain the correction term $C(x,y)$ as a function of $y\in M$

$$\Delta_g C(x,y) = 0,\ ,\ \partial_{\nu_y} C(x,y) = \partial_{\nu_y} E(x,y)-\frac{1}{|\partial M|},\ \ \int_{\partial M} C(x,y) d{\rm vol}_h = \int_{\partial M} E(x,y)d{\rm vol}_h.$$
Setting $G(x,y) = -E(x,y) + C(x,y)$ we get, for each fixed $x\in M$ the unique solution (as a distribution in $z$) $G(x,z)$ to
\begin{eqnarray}
\label{greens fnct}
\Delta_g G(x,z) = - \delta_x(z)\ ,\partial_{\nu_z} G(x,z) \mid_{z\in \partial M} =\frac{-1}{ |\partial M|},\ \ \int_{\partial M} G(x,z) d{\rm vol}_h = 0.
\end{eqnarray}
Fix for the time being $y\neq z$ in the interior of $M$ and observe that $x\mapsto G(z,x)$ is smooth in a neighbourhood of the singularity of the map $x\mapsto G(x,y)$ and vice versa. Therefore we can integrate by parts the the expression $G(z,y) = - \int_M G(z,x) \Delta_x G(y,x ) dx$ to obtain
$$ G(z,y) - G(y,z) = \int_{\partial M} G(y,x)\partial_{\nu_x} G(z,x)  - G(z,x)\partial_{\nu_x} G(y,x) {\rm vol}_h(x). $$
The boundary and orthogonality conditions in \eqref{greens fnct} ensures that the right side vanishes so we have
\begin{eqnarray}
\label{symmetry}
G(z,y) = G(y,z).
\end{eqnarray}

Let $\Lambda : H^{k}(\partial M) \to H^{k-1}(\partial M)$ denote the Dirichlet-to-Neumann map (see \cite{leeuhlmann} for definition) whose range is precisely a codimension one subspace of $H^{k-1}$ which annihilates the constant function. By the orthogonality condition in \eqref{greens fnct}, the behaviour of $G(x,y)$ is uniquely characterized by its action on the range of $\Lambda$. To this end, for $f\in C^\infty(\partial M)$, denote its harmonic extension by $u_f$. Integrating by parts the expression $0 = \int_{M} \Delta_g u_f(x) G(x,y) dx$ for $z$ in the interior of $M$ we have
\begin{eqnarray}
\label{compose with DN map}
u(y) &=&  \int_{\partial M}\partial_\nu u_f(x) G(x,y) {\rm dvol}_h(x)+ \frac{1}{|\partial M|} \int_{\partial M} f {\rm dvol}_h\\\nonumber
&=& \int_{\partial M}(\Lambda f)(x) G(x,y) {\rm dvol}_h(x)+ \frac{1}{|\partial M|} \int_{\partial M} f {\rm dvol}_h
\end{eqnarray}

Observe that any $\tilde f\in C^\infty(\partial M)$ has a unique decomposition $\tilde f = c + \Lambda f$ for some constant function $c$ and $f\in C^\infty(\partial M)$ satisfying $\int_{\partial M} f = 0$. Therefore, using the orthogonality condition of \eqref{greens fnct} and taking the trace of \eqref{compose with DN map} we see that the map
$$\tilde f \mapsto \left.\left(\int_{\partial M}\tilde f(x) G(x,y) {\rm dvol}_h(x)\right)\right\vert_{y\in \partial M}$$
is well defined and takes $C^\infty(\partial M) \to C^\infty(\partial M)$. We denote this operator by $G_{\partial M}$ and its Schwartz kernel by $G_{\partial M} (x,y)$.
Going back to \eqref{compose with DN map} we see that
$$f = G_{\partial M} \Lambda f + P f,$$
where 
\begin{eqnarray}
\label{P}
P f := |\partial M|^{-1} \int_{\partial M} f
\end{eqnarray}
is smoothing. In operator form this is
\begin{eqnarray}
\label{lambda inverse}
 I = G_{\partial M}\Lambda + P.
 \end{eqnarray}
Since $\Lambda \in \Psi^{1}_{cl}(\partial M)$ is elliptic (see \cite{leeuhlmann}) we can conclude that $G_{\partial M}\in \Psi_{cl}^{-1}(\partial M)$ which maps $H^{k}(\partial M) \to H^{k+1}(\partial M)$ for all $k\in \R$. This completes the proof of Proposition \ref{prop G expansion} part i). 
\begin{Rem}
A quick way to prove part ii) of Proposition \ref{prop G expansion} would be to observe that \eqref{lambda inverse} implies $G_{\partial M}$ is a parametrix for $\Lambda$. The symbol expansion for $\Lambda$ has already been computed in \cite{leeuhlmann} so constructing its parametrix follows from standard pseudodifferential calculus. However, we will choose instead to take the layer potential approach since it iwill be more conducive for future numerical implementations. See Remark \ref{remark for solving for R} below.
\end{Rem}
Applying on the right  the single-layered potential $S$ defined in\eqref{S and N} and using identity (11.58) of \cite{taylor2} we have
\begin{eqnarray}
\label{integral equation for green}
G_{\partial M} = -2S + G_{\partial M}N^* + 2PS.
\end{eqnarray}
Iterating this equation and using intertwining property (11.59) of \cite{taylor2} we get
\begin{eqnarray}
\label{G = -2S - 2NS}
G_{\partial M} = -2S - 2NS + \Psi_{cl}^{-3}(\partial M).
\end{eqnarray}
By \eqref{comp calculus} the principal symbol of the operator $NS$ is simply the product of the principal symbols of $S$ with the principal symbol of $N$. The leading singularities of the operators $S$ and $N$ are given in \eqref{principal of S and N} and the principal symbols of these kernels are computed in Lemmas \ref{symbol of dg inverse} and \ref{symbol of normal derivative}. 
Therefore, using Proposition \ref{principal term}, we see that modulo $\Psi_{cl}^{-3}(\partial M)$, the integral kernel of $NS$ is given by
$$\frac{1}{8\pi} {H(x)} \log d_{h}(y,x) - \frac{1}{32\pi} \left(\II_x\left (\frac{\exp_x^{-1} (y)}{|\exp_x^{-1} (y)|_h}\right) - \II_x\left (\frac{*\exp_x^{-1} (y)}{|\exp_x^{-1} (y)|_h}\right)\right)$$
when $x,y\in \partial M$ are close to each other.

Inserting this into \eqref{G = -2S - 2NS} we get that when $x,y\in \partial M$ are close to each other,
\begin{eqnarray}
\quad \quad G_{\partial M}(x,y) &=& \frac{1}{2\pi} d_g(x,y)^{-1} - \frac{1}{4\pi}  {H(x)} \log d_{h}(y,x) \\\nonumber&& + \frac{1}{16\pi} \left(\II_x\left (\frac{\exp_x^{-1} (y)}{|\exp_x^{-1} (y)|_h}\right) - \II_x\left (\frac{*\exp_x^{-1} (y)}{|\exp_x^{-1} (y)|_h}\right)\right) + R(x,y),
\end{eqnarray}
where $R(x,y)$ is the Schwartz kernel of an operator in $\Psi_{cl}^{-3}(\partial M)$ which we call the {\em regular part} of $G(x,y)$. Observe that since the principal symbol of $R$ is in $S_{cl}^{-3}(T^*\partial M)$ and $\partial M$ is dimension $2$, Sobolev embedding yields that 
\begin{eqnarray}
\label{regularity of R}
R(\cdot,\cdot) \in C^{0,\alpha}(\partial M\times \partial M) 
\end{eqnarray}
for all $\alpha <1$.
The proof of Proposition \ref{prop G expansion} is now complete.
\begin{Rem}
\label{remark for solving for R}
Note that \eqref{G expansion} peels off the "singular part" of the distribution $G_{\partial M}(x,y)$ and gives us the representation
$$G_{\partial M} (x,y)= G_{sing} (x,y)+ R(x,y)$$
with the singularity structure of $G_{sing}$ explicitly given by \eqref{G expansion}. Inserting this representation of $G_{\partial M}$ into \eqref{integral equation for green} gives the following integral equation for the regular part $R(x,y)$: 
$$ R(I - N^*)= -G_{sing} -2S + G_{sing} N^* + 2PS$$
where the operators $P$, $S$, and $N$ are given by \eqref{P} and \eqref{S and N}.

Since $N^* \in \Psi^{-1}_{cl}(\partial M)$, it is a compact operator which makes $I-N^*$ Fredholm with index zero. Therefore, numerically computing for $R(x,y)$ amounts to solving a Fredholm boundary integral equation subject to the orthogonality condition 
$$\int_{\partial M} G_{\partial M}(x,z) {\rm dvol}_h(z) = 0.$$
\end{Rem}

\end{section}
\begin{section}{Inverting the Normal Operator}
\label{normal operator}
Let $\Omega\subset \R^n$ be a bounded convex domain with smooth boundary. We will analyze the mapping properties of the operator 
\begin{eqnarray}
\label{def of L}
L : f\mapsto \int_\Omega \frac{f(s)}{|s-t|^{n-1}} ds_1\dots ds_n
\end{eqnarray}
and its inverse. Methods do exist \cite{Martin},\cite{HJU} for the explicit expression of the inverse of $L$ when $\Omega = \D$ (which is sufficient for our setting). When $\Omega$ is a two dimensional ellipse \cite{schuss2006ellipse} computed explicitly the inverse of $L$ acting on the constant function.

The purpose of Section \ref{bounds} is to provide a geometric perspective to the operator $L$ one of the advantages of which is that it provides an explicit formula for $L^{-1}(1)$ when $\Omega$ is a ball of any dimension. Our perspective is based on some of the recent progress on integral geometry (in particular \cite{pestovuhlmann}, \cite{monard}, \cite{ilmavirta}). Since the explicit formulas and estimates will be valid in all dimensions, this will provide the key ingredient in proving Theorem \ref{main theorem} in all dimensions. When $\Omega$ is not necessarily $\B$, this geometric point of view may also potentially provide ways to relate some of the quantities of interest to the geometry of $\Omega$.

Section \ref{integrals} will provide some explicit formulas for the composition of $L^{-1}$ with other operators in the case when $\Omega = \D$, the two dimensional disk. Section \ref{Explicit Formulas in 2D Ellipse} will do the same for when $\Omega$ is the two dimensional ellipse although the formulas will not be as explicit. 
%\texorpdfstring{ $L$}{Lg}
\subsection{Mapping Properties of L} 
\label{bounds}
Denote by 
$$\partial_+S\Omega := \{(x,v) \in \partial \Omega \times S^{n-1} \mid v\cdot \nu(x) \leq 0\}$$ 
to be the set of inward pointing unit vectors on $\partial \Omega$. Note that this is a closed submanifold of the sphere bundle $S\Omega$ and thus inherits its smooth structure. Define the X-Ray transform $I : C^\infty(\overline\Omega) \to C^\infty( \partial_+ S\Omega)$ by 
$If(x,v) := \int_0^{\tau(x,v)} f(x + tv) dt$
where $\tau(x,v)$ is the time it takes for a ray of unit velocity $v$ starting at $x\in \overline\Omega$ to reach the boundary $\partial \Omega$. Note that because $\Omega$ is assumed to be convex, $\tau(x,v)$ is a smooth function on $\partial_+SM$. Furthermore, $I$ is injective by \cite{pestovuhlmann}.

By \cite{sharafutdinov} Theorem 4.2.1 this operator extends to an operator $I : L^2(\Omega) \to L^2_\mu(\partial_+S\Omega)$ 
where $\mu$ is the measure given by $\mu = |\nu(x) \cdot v| {\rm dvol}_{\partial\Omega} {\rm dvol}_{S^{n-1}}$. This $L^2$ space mapping property allows us to define the adjoint operator $I^*$ given by (see \cite{pestovuhlmann})
\begin{eqnarray}
\label{def of I*}
I^*\omega (x) = \int_{S^{n-1}} \omega( x + \tau(x,v) v) {\rm dvol}_{S^{n-1}}(v)
\end{eqnarray}
when acting on smooth functions $\omega$. This allows us to define a self-adjoint normal operator $I^*I :L^2(\Omega) \to L^2(\Omega)$. It turns out that by \cite{pestovuhlmann} the Schwartz kernel of $I^*I$ is precisely $2|s-t|^{-n+1}$ and therefore $I^* I = 2L$.
Let $d_\Omega(\cdot)$ be any smooth positive function on $\Omega$ which is equal to $dist(\cdot,\partial \Omega)$ near the boundary. By Theorem 2.2 and Theorem 4.4 of \cite{monard} respectively, we have that 
\begin{eqnarray}
\label{I*I bijective}
I^*I : d_\Omega^{-1/2} C^\infty (\overline \Omega) \to C^\infty (\overline\Omega)
\end{eqnarray}
is a bijection and 
\begin{eqnarray}
\label{I*I is homemo}
2L = I^*I : \{u \in H^{-1/2}(\R^n) \mid {\rm supp}(u) \subset \Omega\} \simeq H^{1/2}(\Omega)^*\to H^{1/2}(\Omega)
\end{eqnarray}
is a self-adjoint homeomorphism. Thus there exists a unique function $u_0 \in d_\Omega^{-1/2} C^\infty (\overline \Omega)$ such that $L u_0 = 1$ which is equivalent to $I^*I u_0= 2$.
To compute $u_0$, observe that if we find $u_0$ such that 
\begin{eqnarray}
\label{I = const}
Iu_0 (x,v) = \frac{2}{{\rm Vol}(S^{n-1})}
\end{eqnarray}
for all $(x,v) \in \partial_+ S\Omega$ then by \eqref{def of I*} we would have $I^* I u_0 = 2$. The solution of \eqref{I = const} is easy to compute explicitly when $\Omega = \B$. Indeed, direct computation shows that choosing 
\begin{eqnarray}
\label{def of u0}
u_0(x) =  \frac{2}{\pi{\rm Vol}(S^{n-1})\sqrt{1 - |x|^2}}
\end{eqnarray}
one satisfies \eqref{I = const}. In particular if $n=2$ (which is the case we are interested in) we have that 
\begin{eqnarray}
\label{L inverse}
L^{-1}(1) = u_0(x) = \frac{1}{\pi^2\sqrt{1-|x|^2}},\ \ {\rm in\ dimension\ 2}
\end{eqnarray}
\begin{Rem}
This process of computing solution to $I^*I u_0 = const$ by solving for $Iu_0  = const $ unfortunately only works for $\Omega = \B$. In fact, thanks to the rigidity result of \cite{ilmavirta}, we know that $Iu_0 = 1$ is solvable iff $\Omega = \B$. However, for more general domains the geometric view presented here could potentially allow one to apply the reconstruction formula for inverting $I$ \cite{pestovuhlmann2} to solve $I^*I u_0= 1$ explicitly. To do so one must first invert $I^*$ (which has a large kernel but is surjective) into the range of $I$ and it is not clear how to do this when $\Omega \neq \B$.
\end{Rem}

\subsection{Integrals Involving L inverse of 1.}
\label{integrals}
We also define $R_{\log}$ and $R_\infty$ to be operators with kernels $\log|s'-t'|$ and $\frac{(s_1-t_1)^2 - (s_2-t_2)^2}{|s'-t'|^2}$ respectively.

\begin{lemma}
\label{Rinfty and Rlog bounded}
The operators $R_\infty$ and $R_{\log}$ are bounded maps from $H^{1/2}(\D)^*$ to $H^{3/2}(\D)$.
\end{lemma}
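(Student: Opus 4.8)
The plan is to recognize both $R_{\log}$ and $R_\infty$ as classical pseudodifferential operators of order $-2$ on a neighborhood of $\overline{\D}$ in $\R^2$, cut off so that their Schwartz kernels are supported near the diagonal, and then combine the standard $\Psi$DO mapping properties with a separate (easy) estimate for the off-diagonal smooth part. Concretely, write a smooth cutoff $\chi(s',t')$ equal to $1$ near the diagonal and supported in a small neighborhood of it, and split $R_{\log} = R_{\log}\chi + R_{\log}(1-\chi)$, and similarly for $R_\infty$. The operator with kernel $(1-\chi(s',t'))\log|s'-t'|$ has smooth kernel on $\overline{\D}\times\overline{\D}$, hence is bounded $H^s(\D)^*\to C^\infty(\overline\D)\hookrightarrow H^{3/2}(\D)$ for any $s$, so it suffices to treat the diagonal pieces $R_{\log}\chi$ and $R_\infty\chi$.

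For the diagonal pieces, the key point is that $\log|s'-t'|$ and $\frac{(s_1-t_1)^2-(s_2-t_2)^2}{|s'-t'|^2}$ are, up to the cutoff, translation-invariant kernels whose $t'$-Fourier transforms (in the sense of Lemma \ref{some FT}) are $-2\pi|\xi'|^{-2}$ and $2\pi(\xi_2'^2-\xi_1'^2)|\xi'|^{-4}$ respectively for $|\xi'|\geq 1$. Thus, after the cutoff, each of $R_{\log}\chi$ and $R_\infty\chi$ is a classical pseudodifferential operator of order $-2$ on $\R^2$ with principal symbol a homogeneous function of degree $-2$ (the polyhomogeneous structure is exactly the kind verified in Prop \ref{principal term} via Lemma \ref{some FT}, and membership in $S^{-2}_{cl}$ follows from Prop 2.8 in Chapter 7 of \cite{taylor2}). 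A $\Psi$DO of order $-2$ maps $H^{m}_{loc}\to H^{m+2}_{loc}$ for all $m$; taking $m=-1/2$ gives a bounded map from compactly supported $H^{-1/2}$ distributions on a neighborhood of $\overline\D$ into $H^{3/2}_{loc}$, and restricting to $\D$ gives the bound $H^{1/2}(\D)^*\to H^{3/2}(\D)$, since $H^{1/2}(\D)^*$ is identified (as in \eqref{I*I is homemo}) with the $H^{-1/2}(\R^2)$-distributions supported in $\overline\D$. One must check that extending $f\in H^{1/2}(\D)^*$ by zero and hitting it with the $\Psi$DO, then restricting, is independent of choices and bounded — this is standard, using that the kernel is properly supported after the cutoff.

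The one genuinely delicate bookkeeping point — and the step I expect to be the main obstacle — is the interplay between the boundary of $\D$ and the $\Psi$DO calculus: $R_{\log}$ and $R_\infty$ act on functions (or distributions) living only on $\D$, not on all of $\R^2$, so one is really dealing with a truncated operator $r_\D\, A\, e_\D$ where $e_\D$ is extension by zero and $r_\D$ is restriction. Because the distributions in $H^{1/2}(\D)^*$ are supported in the closed disk and the target regularity $3/2$ is below $2$, this truncation is benign: $e_\D : H^{1/2}(\D)^* \to \{u\in H^{-1/2}(\R^2):\operatorname{supp} u\subset\overline\D\}$ is bounded (indeed an isomorphism onto its image), $A$ of order $-2$ maps this into $H^{3/2}(\R^2)$, and $r_\D : H^{3/2}(\R^2)\to H^{3/2}(\D)$ is bounded. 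The care needed is only in justifying that $e_\D$ lands in the global $H^{-1/2}$ space with the right support property and that no boundary traces obstruct the gain of two derivatives — which holds precisely because $-1/2$ and $3/2$ straddle the half-integers without hitting them, so extension by zero and restriction are both bounded at these orders. Assembling the diagonal $\Psi$DO bound with the smooth off-diagonal bound then yields the claim for both $R_{\log}$ and $R_\infty$.
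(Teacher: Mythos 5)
Your proof follows essentially the same route as the paper: extend the kernels to classical order $-2$ pseudodifferential operators on $\R^2$ (the paper also cites Prop.\ 2.8 of Chapter 7 of \cite{taylor2} and uses the same cutoff-then-restrict mechanism), and compose with the extension isomorphism $E$ from \eqref{I*I is homemo} and the restriction map. The only slip is your parenthetical remark that ``$-1/2$ and $3/2$ straddle the half-integers without hitting them'' --- both of these exponents \emph{are} half-integers, and $3/2$ is in fact a threshold value for extension by zero --- but this does not affect the argument because you only need extension by zero at the exponent $-1/2$ (which is exactly the isomorphism asserted in \eqref{I*I is homemo}) and restriction at $3/2$ (which is bounded at every exponent), so the conclusion stands.
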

\begin{proof}
Observe that the integral kernels of both $R_{\log}$ and $R_\infty$ extends naturally to kernels representing operators in $\Psi^{\infty}(\R^2)$ which we denote by $\tilde R_{\log}$ and $\tilde R_{\infty}$ respectively. We denote by $E:  H^{1/2}(\D)^* \to \{u \in H^{-1/2} (\R^2) \mid {\rm supp}(u) \subset \Omega\}$ to be the isomorphism obtained by the trivial extension. Let $\chi \in C^\infty_0(\R^2)$ be identically $1$ on $\D$. Then we have that 
\begin{eqnarray}
\label{extend kernel}
\left(R_{\log} u \right)\mid_{\D}=  \left( \chi \tilde R_{\log} \chi E u \right) \mid_{\D} 
\end{eqnarray}
and the same holds for $R_\infty$.

By Proposition 7.2.8 of \cite{taylor2} we have that both $\chi \tilde R_{\log} \chi$ and $\chi \tilde R_{\infty} \chi$ are pseudodifferential operators of order $-2$. Therefore by \eqref{extend kernel} both $R_{\log}$ and $R_{\infty}$ are bounded operators from $ H^{1/2}(\D)^*$ to $H^{3/2}(\D)$.

\end{proof}

The following lemma was proved in \cite[Theorem 4.2]{HJU}.
\begin{lemma}\label{int_L_inv}
	For $u\in H^{\frac{1}{2}}(\mathbb{D})$, it follows
	\begin{eqnarray}
	\left\langle {L}^{-1} u, 1\right\rangle = \frac{1}{\pi^2} \int_{\mathbb{D}} \frac{u(t')}{(1 - |t'|^2)^{\frac{1}{2}}}dt_1 dt_2.
	\end{eqnarray}
\end{lemma}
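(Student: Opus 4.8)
The plan is to combine the self-adjointness of $L$ with the explicit formula \eqref{L inverse} for $L^{-1}(1)$, and then to identify the resulting abstract duality pairing with an honest integral.

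First, recall from \eqref{I*I is homemo} that $2L = I^*I : H^{1/2}(\D)^* \to H^{1/2}(\D)$ is a self-adjoint homeomorphism, hence $L^{-1} : H^{1/2}(\D) \to H^{1/2}(\D)^*$ is a well-defined homeomorphism as well. Write $\langle\,\cdot\,,\,\cdot\,\rangle$ for the $(H^{1/2}(\D)^*,\,H^{1/2}(\D))$ duality pairing; self-adjointness of $L$ means $\langle La,\,b\rangle = \langle a,\,Lb\rangle$ for all $a,b \in H^{1/2}(\D)^*$, both sides being legitimate since $La, Lb \in H^{1/2}(\D)$. By \eqref{L inverse}, the function $u_0(t') := \bigl(\pi^2 (1-|t'|^2)^{1/2}\bigr)^{-1}$ equals $L^{-1}(1)$; in particular $u_0 \in d_\D^{-1/2} C^\infty(\overline\D) \subset H^{1/2}(\D)^*$ and $L u_0 = 1$. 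Taking $a = L^{-1}u \in H^{1/2}(\D)^*$ and $b = u_0$ in the symmetry relation gives
\[
\langle L^{-1}u,\, 1\rangle = \langle L^{-1}u,\, L u_0\rangle = \langle L(L^{-1}u),\, u_0\rangle = \langle u,\, u_0\rangle .
\]

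It remains to verify that $\langle u,\,u_0\rangle = \int_\D u(t')\,u_0(t')\,dt'$. Near $\partial\D$ the function $u_0$ behaves like $d_\D^{-1/2}$, so $u_0 \in L^{4/3}(\D)$, while the two-dimensional Sobolev embedding gives $H^{1/2}(\D)\hookrightarrow L^4(\D)$; by H\"older's inequality the integral $\int_\D u\,u_0$ converges absolutely and depends continuously on $u \in H^{1/2}(\D)$. For $u\in C^\infty(\overline\D)$ the equality $\langle u,\,u_0\rangle = \int_\D u\,u_0$ is immediate from the way $d_\D^{-1/2}C^\infty(\overline\D)$ sits inside $H^{1/2}(\D)^*\cong\{w\in H^{-1/2}(\R^2) : \supp w \subset \D\}$, namely by trivial extension, a distribution which acts on test functions by integration. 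Since $C^\infty(\overline\D)$ is dense in $H^{1/2}(\D)$ and both sides are continuous in $u$, the identity persists for all $u\in H^{1/2}(\D)$. Combining with the previous display,
\[
\langle L^{-1}u,\, 1\rangle = \int_\D u(t')\,u_0(t')\,dt' = \frac{1}{\pi^2}\int_\D \frac{u(t')}{(1-|t'|^2)^{1/2}}\,dt_1\,dt_2 ,
\]
which is the asserted identity.

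The step I expect to require the most care is the final identification: pinning down the precise sense in which the boundary-singular function $u_0$ represents the element of $H^{1/2}(\D)^*$ produced by the regularity theory \eqref{I*I bijective}--\eqref{I*I is homemo}, and checking that this functional genuinely acts by integration against $u$. This is a soft density argument once the integrability is secured, but it is where one must be careful not to conflate the abstract inverse $L^{-1}$ with a naive singular integral operator. A more hands-on alternative sidesteps the duality: approximate $u$ in $H^{1/2}(\D)$ by $u_n\in C^\infty(\overline\D)$, use \eqref{I*I bijective} to solve $Lv_n=u_n$ with $v_n\in d_\D^{-1/2}C^\infty(\overline\D)$, observe that $\langle v_n,\,1\rangle=\int_\D v_n$ and rewrite this via Fubini as $\int_\D u_0\,u_n$ using $1=Lu_0$ and the symmetry of the kernel $|s'-t'|^{-1}$, and finally pass to the limit, since $v_n\to L^{-1}u$ in $H^{1/2}(\D)^*$ and $u_n\to u$ in $L^4(\D)$.
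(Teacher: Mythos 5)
Your proposal is correct and follows essentially the same route as the paper: the paper's entire proof is the one-line observation that $L:H^{1/2}(\D)^*\to H^{1/2}(\D)$ is a self-adjoint homeomorphism (together with the already-established formula $L^{-1}(1)=u_0$), and you simply spell out the duality bookkeeping $\langle L^{-1}u,1\rangle=\langle L^{-1}u,Lu_0\rangle=\langle u,u_0\rangle$. The extra work you do — checking $u_0\in L^{4/3}(\D)$, invoking $H^{1/2}(\D)\hookrightarrow L^4(\D)$, and running a density argument to identify the abstract pairing $\langle u,u_0\rangle$ with the honest integral $\int_\D u\,u_0$ — is exactly the point the paper leaves implicit (and delegates to the cited reference \cite{HJU}), so your write-up is a faithful and more careful version of the intended argument rather than a genuinely different one.
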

\begin{proof}

By \eqref{I*I is homemo} $L : H^{\frac{1}{2}}(\mathbb{D})^* \to H^{\frac{1}{2}}(\mathbb{D})$ is a self-adjoint homeomorphism. The result of this Lemma is a direct consequence.
\end{proof}
\begin{lemma}\label{log_function}
	Let
	\begin{eqnarray*}
		f(s') = R_{log}L^{-1}1 = \int_{\mathbb{D}} \log|t'-s'| [L^{-1}1](t') dt_1 dt_2,
	\end{eqnarray*}
	then
	\begin{eqnarray*}
		f(s') = \frac{2}{\pi} \log|s'| + \frac{2}{\pi}\left(\frac{1}{2} \log\left| (1 - |s'|^2)^{\frac{1}{2}} + 1 \right| - \frac{1}{2} \log\left| (1 - |s'|^2)^{\frac{1}{2}} - 1 \right|\right) - \frac{2}{\pi} (1 - |s'|^2)^{\frac{1}{2}}.
	\end{eqnarray*}   
	\begin{proof}
		Note that $\frac{1}{2\pi} \log |t '- s'|$ is the fundamental solution for the Laplace operator in $\mathbb{R}^2$, therefore,
		\begin{eqnarray*}
			\Delta f = 2 \pi [L^{-1}1] = \frac{2}{\pi} \frac{1}{(1 - |t'|^2)^{\frac{1}{2}}}
		\end{eqnarray*}
		Since $[{L}^{-1}1](t')$ is radially symmetric, $f(t') = \tilde{f}(r)$ where $r = |t'|$. Writing the Laplace operator in polar coordinates, we get
		{\Small\begin{eqnarray*}
				(r\tilde{f}_r)_r = \frac{2}{\pi} \frac{r}{(1 - r^2)^{\frac{1}{2}}}.
		\end{eqnarray*}}
		Integration gives
		{\Small\begin{eqnarray*}
				\tilde{f}_r (r)= \frac{2}{\pi} \left(\frac{C_1}{r} - \frac{(1 - r^2)^{\frac{1}{2}}}{r} \right)
		\end{eqnarray*}}
		and
		{\small\begin{eqnarray}\label{tilda_f}\nonumber
			\quad \tilde{f}(r) = \frac{2}{\pi} \left(C_1 \log r - \frac{1}{2} \log\left| (1 - r^2)^{\frac{1}{2}} - 1 \right| + \frac{1}{2} \log\left| (1 - r^2)^{\frac{1}{2}} + 1 \right| - (1 - r^2)^{\frac{1}{2}} + C_2\right).\\
			\end{eqnarray}}
		Let us find $C_1$ and $C_2$. Note that $\tilde{f}(r)$ does not have singularity at $r=0$, namely,
		{\begin{eqnarray*}
				\tilde{f}(0) = f(0) = \frac{1}{\pi^2} \int_{\mathbb{D}} \frac{\log|t'|}{(1 - |t'|^2)^{\frac{1}{2}}} = \frac{2}{\pi} \int_{0}^{1} \frac{r \log r}{(1 - r^2)^{\frac{1}{2}}} dr = \frac{2}{\pi} (\log 2 - 1).
		\end{eqnarray*}}
		Therefore, the identities
		{\begin{eqnarray*}
				C_1 \log r - \frac{1}{2} \log\left| (1 - r^2)^{\frac{1}{2}} - 1 \right| = \frac{1}{2} \log \left| \frac{r^{2 C_1}}{(1 - r^2)^{\frac{1}{2}} - 1} \right| = \frac{1}{2} \log \left| \frac{r^{2 C_1}}{-\frac{1}{2} r^2 + O(r^4)} \right|,
		\end{eqnarray*}}
		as $r \rightarrow 0$, implies that $C_1 = 1$. Hence, putting $r = 0$ into \eqref{tilda_f}, gives
		\begin{eqnarray*}
			\frac{2}{\pi} (\log 2 - 1) = \frac{2}{\pi} \left( \frac{1}{2} \log 2 + \frac{1}{2} \log2 - 1 + C_2\right),
		\end{eqnarray*}
		so that $C_2 = 0$.
	\end{proof}
\end{lemma}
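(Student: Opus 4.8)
\emph{Proof proposal.} The plan is to use two structural facts from Section~\ref{normal operator}. First, by \eqref{L inverse} the density $L^{-1}1$ is explicit and radially symmetric, $(L^{-1}1)(t') = (\pi^2(1-|t'|^2)^{1/2})^{-1}$, and it lies in $L^p(\D)$ for every $p<2$, so $f = R_{\log}L^{-1}1$ is an honest convergent integral to which classical two-dimensional potential theory applies. Second, $\frac{1}{2\pi}\log|s'-t'|$ is the fundamental solution of the Laplacian on $\R^2$; hence $f$ is $2\pi$ times the logarithmic potential of $L^{-1}1$ and solves $\Delta_{s'}f(s') = 2\pi(L^{-1}1)(s') = \frac{2}{\pi}(1-|s'|^2)^{-1/2}$ on $\D$. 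Because $L$ and the constant function are invariant under rotations (equivalently, because $L^{-1}1$ is radial), $f$ is radial: $f(s') = \tilde f(r)$ with $r = |s'|$.

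First I would pass to polar coordinates to convert this Poisson equation into the ordinary differential equation $(r\tilde f_r)_r = \frac{2r}{\pi}(1-r^2)^{-1/2}$ on $(0,1)$ and integrate it twice. The first integration, using $\int r(1-r^2)^{-1/2}\,dr = -(1-r^2)^{1/2}$, gives $\tilde f_r(r) = \frac{2}{\pi}\big(C_1 r^{-1} - (1-r^2)^{1/2}r^{-1}\big)$ with an integration constant $C_1$. The second, using $\int (1-r^2)^{1/2}r^{-1}\,dr = (1-r^2)^{1/2} - \frac12\log|(1-r^2)^{1/2}+1| + \frac12\log|(1-r^2)^{1/2}-1|$, produces a two-parameter family of exactly the form written in the lemma, with a further constant $C_2$.

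The crux of the argument is pinning down $C_1$ and $C_2$. Since the right-hand side $\frac{2}{\pi}(1-|s'|^2)^{-1/2}$ is smooth near $0$, $\tilde f$ must be finite at $r=0$; but the general solution carries the explicit term $C_1\log r$ together with the combination $\frac12\log|(1-r^2)^{1/2}+1| - \frac12\log|(1-r^2)^{1/2}-1|$, whose small-$r$ expansion is $\log 2 - \log r + O(r^2)$. Requiring the $\log r$ contributions to cancel forces $C_1 = 1$. To get $C_2$, I would evaluate the constant directly: $\tilde f(0) = f(0) = \int_{\D}\log|t'|\,(L^{-1}1)(t')\,dt' = \frac{2}{\pi}\int_0^1 \frac{r\log r}{(1-r^2)^{1/2}}\,dr = \frac{2}{\pi}(\log 2 - 1)$ (one could also deduce $f(0)$ from Lemma~\ref{int_L_inv} applied to $u=\log|\cdot|$), and compare this with the value $\frac{2}{\pi}(\log 2 - 1 + C_2)$ obtained by setting $r=0$ in the general formula; this gives $C_2 = 0$ and hence the stated expression for $f$.

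I expect the only genuine obstacle to be this constant-fitting step — spotting the exact cancellation of the logarithmic singularities that pins $C_1=1$, and computing $\int_0^1 r\log r\,(1-r^2)^{-1/2}\,dr = \log 2 - 1$ (which falls out of the substitution $u = 1-r^2$, or $r=\sin\phi$ followed by an integration by parts). The reduction to the radial ODE via rotational symmetry and the two elementary antiderivatives are routine.
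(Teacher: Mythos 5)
Your proposal is correct and follows essentially the same route as the paper: recognize that $f$ is (up to $2\pi$) the logarithmic potential of the radial density $L^{-1}1$, reduce the resulting Poisson equation to the radial ODE $(r\tilde f_r)_r = \tfrac{2r}{\pi}(1-r^2)^{-1/2}$, integrate twice, and fix the two constants by cancelling the $\log r$ singularity at $r=0$ (giving $C_1=1$) and evaluating $f(0)=\tfrac{2}{\pi}(\log 2-1)$ directly (giving $C_2=0$). The only cosmetic differences are the order in which you determine the constants and your added remark that $L^{-1}1\in L^p(\D)$ for $p<2$, which makes the convergence of the potential integral explicit.
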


Due to Lemmas \ref{log_function} and \ref{int_L_inv}, the following identity is a direct computation:
\begin{lemma}\label{integral of Rlog}
	The following identity holds
	\begin{eqnarray*}
		\left\langle L^{-1} R_{log} L^{-1}1, 1 \right\rangle = \frac{8}{\pi^2} \log 2 -\frac{6}{\pi^2}.
	\end{eqnarray*}
\end{lemma}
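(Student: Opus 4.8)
The plan is to combine the three preceding lemmas directly: Lemma~\ref{int_L_inv} reduces the pairing $\langle L^{-1}v,1\rangle$ to an explicit weighted integral over $\D$, and Lemma~\ref{log_function} gives a closed form for $v := R_{log}L^{-1}1$. Therefore
$$\left\langle L^{-1}R_{log}L^{-1}1,\,1\right\rangle = \left\langle L^{-1}f,\,1\right\rangle = \frac{1}{\pi^2}\int_\D \frac{f(t')}{(1-|t'|^2)^{1/2}}\,dt_1\,dt_2,$$
with $f(t')=\tilde f(r)$, $r=|t'|$, the radial function written out in Lemma~\ref{log_function}. So the whole statement comes down to evaluating a one-variable integral after passing to polar coordinates: $\int_\D \frac{\tilde f(r)}{(1-r^2)^{1/2}}\,dt' = 2\pi\int_0^1 \frac{r\,\tilde f(r)}{(1-r^2)^{1/2}}\,dr$.

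First I would substitute the four pieces of $\tilde f(r) = \frac{2}{\pi}\big(\log r - \tfrac12\log|(1-r^2)^{1/2}-1| + \tfrac12\log|(1-r^2)^{1/2}+1| - (1-r^2)^{1/2}\big)$ and handle the resulting four integrals $\int_0^1 \frac{r\,(\cdot)}{(1-r^2)^{1/2}}\,dr$ one at a time. The natural substitution throughout is $u=(1-r^2)^{1/2}$, so $r\,dr = -u\,du$ and $\frac{r\,dr}{(1-r^2)^{1/2}} = -du$, which turns each term into an elementary integral in $u$ over $[0,1]$: the $\log r = \tfrac12\log(1-u^2)$ term gives $\int_0^1 \tfrac12\log(1-u^2)\,du$, the two $\log|u\mp1|$ terms give $\int_0^1 \log(1\mp u)\,du$ (up to sign), and the $-(1-r^2)^{1/2}$ term gives $-\int_0^1 u\,du = -\tfrac12$. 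Each of these has a standard antiderivative; for instance $\int_0^1\log(1-u)\,du = -1$, $\int_0^1\log(1+u)\,du = 2\log2 - 1$, and $\int_0^1\log(1-u^2)\,du = 2\log 2 - 2$. Assembling the coefficients (including the overall $\frac{2}{\pi}\cdot\frac{1}{\pi^2}\cdot 2\pi = \frac{4}{\pi^2}$ prefactor) yields $\frac{8}{\pi^2}\log 2 - \frac{6}{\pi^2}$.

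Since the statement already asserts that this "is a direct computation," there is no serious obstacle; the only thing to be careful about is bookkeeping of the constants and the handling of the (integrable) logarithmic singularities of $\log r$ at $r=0$ and of $\log|(1-r^2)^{1/2}-1|$ at $r=0$, which is precisely the same cancellation already exploited in the proof of Lemma~\ref{log_function} when fixing $C_1=1$. One should also note that $f=R_{log}L^{-1}1 \in H^{1/2}(\D)$ — which follows from Lemma~\ref{Rinfty and Rlog bounded} together with $L^{-1}1 = u_0 \in d_\D^{-1/2}C^\infty(\overline\D) \subset H^{1/2}(\D)^*$ — so that the pairing $\langle L^{-1}f,1\rangle$ is legitimate and Lemma~\ref{int_L_inv} applies. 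With that justification in place, the computation above gives the claimed value.
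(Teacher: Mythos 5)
Your proof is correct and follows exactly the route the paper intends: the paper states the lemma "is a direct computation" from Lemmas \ref{int_L_inv} and \ref{log_function}, and you simply carry out that computation. The substitution $u=(1-r^2)^{1/2}$, the four elementary integrals $\int_0^1\log(1-u^2)\,du = 2\log 2 - 2$, $\int_0^1\log(1-u)\,du=-1$, $\int_0^1\log(1+u)\,du=2\log 2 -1$, $\int_0^1 u\,du=\tfrac12$, and the prefactor $\frac{4}{\pi^2}$ all check out, giving $\frac{4}{\pi^2}\bigl(2\log 2 - \tfrac32\bigr) = \frac{8}{\pi^2}\log 2 - \frac{6}{\pi^2}$ as claimed.
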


\begin{lemma}
\label{integral of Rinfty}
	The following idetity holds
	\begin{eqnarray*}
		\left\langle L^{-1} R_{\infty} L^{-1}1,1 \right\rangle = 0.
	\end{eqnarray*}
\end{lemma}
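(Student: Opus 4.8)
The plan is to reduce the computation of $\left\langle L^{-1} R_{\infty} L^{-1}1, 1\right\rangle$ to an integral over $\mathbb{D}\times\mathbb{D}$ of the kernel $\frac{(s_1-t_1)^2-(s_2-t_2)^2}{|s'-t'|^2}$ against the explicit density $L^{-1}1 = u_0 = \frac{1}{\pi^2(1-|t'|^2)^{1/2}}$ in both variables, and then to exploit symmetry. First I would use the self-adjointness of $L$ (hence of $L^{-1}$) from \eqref{I*I is homemo}, together with Lemma \ref{int_L_inv}, to write
$$\left\langle L^{-1} R_{\infty} L^{-1}1, 1\right\rangle = \left\langle R_\infty L^{-1}1, L^{-1}1\right\rangle = \frac{1}{\pi^4}\int_{\mathbb{D}}\int_{\mathbb{D}} \frac{(s_1-t_1)^2-(s_2-t_2)^2}{|s'-t'|^2}\,\frac{1}{(1-|s'|^2)^{1/2}}\,\frac{1}{(1-|t'|^2)^{1/2}}\,dt'\,ds'.$$
Here I am implicitly using Lemma \ref{Rinfty and Rlog bounded} to guarantee $R_\infty L^{-1}1 \in H^{3/2}(\mathbb{D})$ so that the pairing with $L^{-1}1 \in H^{1/2}(\mathbb{D})^*$ makes sense and the double integral converges absolutely.

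The key observation is that the entire integrand except the factor $\frac{(s_1-t_1)^2-(s_2-t_2)^2}{|s'-t'|^2}$ is invariant under the rotation by $\pi/2$ acting simultaneously on $s'$ and $t'$, namely $(s',t')\mapsto (Js', Jt')$ where $J(x_1,x_2)=(-x_2,x_1)$: the Euclidean distance $|s'-t'|$ is rotation invariant, and so are $(1-|s'|^2)^{1/2}$ and $(1-|t'|^2)^{1/2}$, and the domain $\mathbb{D}\times\mathbb{D}$ is preserved. Under this change of variables, $(s_1-t_1)^2-(s_2-t_2)^2$ becomes $(s_2-t_2)^2-(s_1-t_1)^2$, i.e. the kernel $R_\infty$ picks up a sign. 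Hence the double integral equals its own negative, and therefore vanishes. Equivalently, one can argue by writing $(s_1-t_1)^2-(s_2-t_2)^2 = \mathrm{Re}\big((s'-t')\cdot(1,i)\big)^2$ times $|s'-t'|^2$ appropriately, or pass to the angular variable $\omega = (s'-t')/|s'-t'|$ and note that $\omega_1^2-\omega_2^2 = \cos 2\theta$ integrates to zero against any measure that is invariant under $\theta\mapsto\theta+\pi/2$; but the cleanest route is the one-line symmetry argument above.

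I do not expect any serious obstacle here. The only point requiring a small amount of care is justifying that Fubini applies and that the pairing $\langle\cdot,\cdot\rangle$ genuinely coincides with the absolutely convergent double integral written above — this follows from Lemma \ref{Rinfty and Rlog bounded} together with the fact that $u_0 \in d_\Omega^{-1/2}C^\infty(\overline{\mathbb{D}})$ is integrable on $\mathbb{D}$, so that the product $u_0(s')u_0(t')$ is in $L^1(\mathbb{D}\times\mathbb{D})$ while the kernel $\frac{(s_1-t_1)^2-(s_2-t_2)^2}{|s'-t'|^2}$ is bounded by $1$ in absolute value. Once absolute convergence is in hand, the rotation-by-$\pi/2$ substitution is immediate and the identity $\left\langle L^{-1} R_{\infty} L^{-1}1,1\right\rangle = 0$ follows.
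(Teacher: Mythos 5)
Your proof is correct and follows essentially the same route as the paper: reduce, via self-adjointness of $L^{-1}$ (equivalently Lemma \ref{int_L_inv}) and the explicit formula \eqref{L inverse}, to an absolutely convergent double integral over $\mathbb{D}\times\mathbb{D}$, and then kill it by a sign-flipping symmetry of the kernel $\frac{(s_1-t_1)^2-(s_2-t_2)^2}{|s'-t'|^2}$ under a simultaneous isometry of $\mathbb{D}$. The only cosmetic difference is that you use the rotation $J(x_1,x_2)=(-x_2,x_1)$ whereas the paper uses the coordinate swap $(x_1,x_2)\mapsto(x_2,x_1)$; both preserve the weights and the domain and negate the kernel, so the argument is the same.
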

\begin{proof}
	By Lemmas \eqref{L inverse} and \ref{int_L_inv}, we know that
	\begin{eqnarray*}
		\left\langle L^{-1} R_{\infty} L^{-1}1,1 \right\rangle = \frac{1}{\pi^2} \int_{\mathbb{D}} \int_{\mathbb{D}} \frac{(s_1-t_1)^2 - (s_2-t_2)^2}{|s'-t'|^2} \frac{1}{(1-|s'|^2)^{\frac{1}{2}}} ds' \frac{1}{(1-|t'|^2)^{\frac{1}{2}}} dt'.
	\end{eqnarray*}
	Consider the following two changes of variables for the right-hand side
	\begin{eqnarray*}
			(s_1,s_2,t_1,t_2)=(r \cos\phi,r \sin\phi,\rho \cos\theta, \rho \sin\theta),\\
			(s_1,s_2,t_1,t_2)=(r \sin\phi,r \cos\phi,\rho \sin\theta, \rho \cos\theta).
	\end{eqnarray*}
\noindent The results differ by multiplying by $-1$, which means that the right-hand side is 0.
\end{proof}

\begin{subsection}{Explicit Formulas in 2 Dimensional  Ellipse}
\label{Explicit Formulas in 2D Ellipse}
We now compute the inverse of the map $f\mapsto \int_{{\mathcal E}_a} \frac{f(s')}{ |s'-t'|} ds'$ where the domain of integration is the two dimensional ellipse ${\mathcal E}_a := \{ s_1^2 + \frac{s_2^2}{a^2}  =1 \}$ instead of a ball. A change of variable leads us to consider the operator 
\begin{equation}
\label{La}
	L_a f = a \int_{\mathbb{D}} \frac{f(s')}{\left((t_1 - s_1)^2 + a^2(t_2 - s_2)^2\right)^{1/2}} ds'
\end{equation}
acting on functions of the disk $\D$. By \cite{schuss2006ellipse} we have that
	\begin{equation}
\label{La u = 1}
L_a  \left({K_a}^{-1} {(1 - |t'|^2)^{-1/2}}\right) = 1,
	\end{equation}
on $\D$ where 
	\begin{equation*}
	K_a = \frac{\pi}{2} \int_{0}^{2 \pi} \frac{1}{\left( \cos^2 \theta + \frac{\sin^2 \theta}{a^2} \right)^{1/2}} d \theta. 
	\end{equation*}
By \eqref{I*I is homemo} this is the unique solution in $H^{1/2}(\D)^*$ to $L_a u = 1$.

	Next we denote
	\begin{equation*}
	R_{\log,a} f(t') : = a \int_\D \log\left((t_1 - s_1)^2 + a^2 (t_2-s_2)^2 \right)^{1/2} f(s') ds',
	\end{equation*}
	
	\begin{equation*}
	R_{\infty,a} f(t') : = a \int_{\mathbb{D}} \frac{(t_1 - s_1)^2 - a^2 (t_2 - s_2)^2}{(t_1 - s_1)^2 + a^2 (t_2 - s_2)^2} f(s') ds'.
	\end{equation*}
For general $a$, the quantities $\langle L^{-1}_a(1), \R_{\log,a} L_a^{-1}(a)\rangle$ and $\langle L^{-1}_a(1), \R_{\infty,a} L_a^{-1}(a)\rangle$ cannot be computed as explicitly as in the case when $a =1$ in Section \ref{integrals}.
\end{subsection}

\end{section}
\begin{section}{Asymptotic Expansion of the Singularly Perturbed Problems}
\subsection{Asymptotic Expansion of Mixed Boundary Value Problems}
We are now ready to compute the asymptotic expansion for the mixed boundary value problem $u_\epsilon \in H^1(M)$,
\begin{eqnarray}
\label{mixed bvp}
\Delta_g u_\epsilon = -1,\ \ u_\epsilon \mid_{\Gamma_{\epsilon, a}} = 0,\ \ \partial_\nu u_\epsilon \mid_{\partial M\backslash \Gamma_{\epsilon, a}} = 0
\end{eqnarray}
which gives the compatibility condition
\begin{eqnarray}
\label{compatible}
\int_{\partial M} \partial_\nu u_\epsilon {\rm dvol}_h= -|M|.
\end{eqnarray}
All integrals on open subsets of $\partial M$ are with respect to the volume form given by the metric $h$.

Using Green's formula as in \cite{HolcmanSchuss2004}, also in \cite{ammari}, we can deduce that for points $x\in M^o$, $u_\epsilon(x)$ satisfies the integral equation
\begin{eqnarray}
\label{ueps}
u_\epsilon(x) = F(x) + C_{\epsilon,a} + \int_{\partial M } G_{}(x,y) \partial_\nu u_\epsilon(y){\rm dvol}_h(y),
\end{eqnarray}
where $C_{\epsilon,a} = |\partial M|^{-1} \int_{\partial M} u_\epsilon$ and $F(x) = \int_M G(x,y)$ solves the boundary value problem
\begin{eqnarray}
\label{F eq}
\Delta F = -1,\ \ \partial_\nu F = -|M|/|\partial M|,\ \ \int_{\partial M} F {\rm dvol}_h= 0.
\end{eqnarray}
By Proposition \ref{prop G expansion} we can take the trace of \eqref{ueps} to the boundary and restrict to the open subset $\Gamma_{\epsilon, a}^o\subset \partial M$. Using \eqref{mixed bvp} we see that
$$0 = F(x) + C_{\epsilon,a} + \int_{\Gamma_{\epsilon, a} } G_{\partial M}(x,y) \partial_\nu u_\epsilon(y){\rm dvol}_h(y) $$
for $x\in \Gamma_{\epsilon, a}^o$. We now replace $G_{\partial M}(x,y)$ for $x,y\in \Gamma_{\epsilon, a}^o$ with the expression in \eqref{G expansion} to obtain 
\begin{eqnarray}\label{integral eq}
\nonumber
-F(x) - C_{\epsilon,a} &=& \frac{1}{2\pi} \int_{\Gamma_{\epsilon, a}} d_g(x,y)^{-1} \partial_\nu u_\epsilon(y) {\rm dvol}_h(y)- \frac{H(x)}{4\pi} \int_{\Gamma_{\epsilon, a}}  \log d_{h}(x,y) \partial_\nu u_\epsilon(y){\rm dvol}_h(y)\\&& + \frac{1}{16\pi} \int_{\Gamma_{\epsilon, a}}\left(\II_x\left (\frac{\exp_x^{-1} (y)}{|\exp_x^{-1} (y)|_h}\right) - \II_x\left (\frac{*\exp_x^{-1} (y)}{|\exp_x^{-1} (y)|_h}\right)\right)\partial_\nu u_\epsilon(y) {\rm dvol}_h(y)\\&&+ \int_{\Gamma_{\epsilon, a}} R(x,y) \partial_\nu u_\epsilon(y){\rm dvol}_h(y).\nonumber
\end{eqnarray}

We will write this integral equation in the coordinate system given by 
\begin{eqnarray}\label{ellipse coord} \D \ni (s_1, s_2) \mapsto x^\epsilon(s_1, as_2; x^*) \in \Gamma_{\epsilon, a},
\end{eqnarray}
where $ x^\epsilon (\cdot; x^*): {\mathcal E}_a \to \Gamma_{\epsilon, a}$ is the coordinate defined in Section \ref{diff geo}. To simplify notation we will drop the $x^*$ in the notation and denote $x^\epsilon(\cdot; x^*)$ by simply $x^\epsilon(\cdot)$.

Note that in these coordinates the volume form for $\partial M$ is given by 
\begin{eqnarray}
\label{volume form in epsilon}
{\rm dvol}_{h} = a \epsilon^2(1 + \epsilon^2 Q_\epsilon(s') )ds_1 \wedge ds_2,\ s'\in \D
\end{eqnarray}
for some smooth function $Q_\epsilon (s')$ whose derivatives of all orders are bounded uniformly in $\epsilon$. We denote 
\begin{eqnarray}
\label{def of psi}
\psi_\epsilon(s') := \partial_\nu u_\epsilon (x^\epsilon(s_1, as_2)).
\end{eqnarray}
The compatibility condition \eqref{compatible} written using the expression for the volume form \eqref{volume form in epsilon} is then
\begin{eqnarray}
\label{average of psi}
\int_\D \psi_\epsilon (s') (1 + \epsilon^2 Q_\epsilon (s')) ds_1 ds_2 = -\frac{|M|}{a \epsilon^2}.
\end{eqnarray} 

Let us unwrap the right hand side of \eqref{integral eq} term by term in the coordinate given by $x^\epsilon(\cdot)$. Write out the integral of the first term using the expression of the volume form \eqref{volume form in epsilon} and the expression for $d_g(x,y)^{-1}$ in Corollary \ref{rescaled dist kernel} and taking into account that the coordinate system is scaled by a factor $a$ as in \eqref{ellipse coord} gives
\begin{equation}
\label{first term}
\int_{\Gamma_{\epsilon, a}} d_g(x,y)^{-1} \partial_\nu u_\epsilon(y){\rm dvol}_h(y) = a \epsilon \int_{\mathbb{D}} \frac{1}{\left((t_1 - s_1)^2 + a^2(t_2 - s_2)^2\right)^{1/2}} \psi_\epsilon(s') ds + \epsilon^3 \A_\epsilon \psi_\epsilon,
\end{equation}
for some operator $\A_\epsilon$ whose Schwartz kernel is given by the second term of the expansion in Corollary \ref{rescaled dist kernel}. Here due to Lemma \ref{uniform operator bound} we have that 
$$\A_\epsilon : H^{1/2}(\D; ds')^* \to H^{1/2}(\D; ds')$$
with operator norm bounded uniformly in $\epsilon$. From here on we will denote by $\A_\epsilon$ any operator which takes $H^{1/2}(\D ;ds')^* \to H^{1/2}(\D; ds')$ whose operator norm is bounded uniformly in $\epsilon$.

Doing the same thing for the second term of \eqref{integral eq} while using Lemma \ref{d and 1/d}, Lemma \ref{uniform operator bound}, and \eqref{average of psi} gives
\begin{eqnarray}
\label{second term}\nonumber
H(x) \int_{\Gamma_{\epsilon, a}} \log d_h (x,y) \partial_\nu u_\epsilon(y){\rm dvol}_h(y) &=& - H(x^*)|M| \log\epsilon+ \epsilon^2 H(x^*)R_{\log ,a}\psi_\epsilon + \epsilon^3 \A_\epsilon \psi_\epsilon\\&& + O_{H^{1/2} (\D)}(\epsilon \log \epsilon)
\end{eqnarray}
where $R_{\log,a}$ is defined at the very beginning of Section \ref{integrals}.
Here $ O_{H^{1/2}(\D)}(\epsilon \log \epsilon)$ denotes a function with $H^{1/2}(\D ;ds')$ norm vanishing to order $\epsilon \log\epsilon$. Note the volume for we use here is now the Euclidean one rather than ${\rm dvol}_h$ given by \eqref{volume form in epsilon}.

Finally, for the third term of \eqref{integral eq} we get by using the coordinate expression derived in Corollary \ref{R infty expression} and the estimate of Lemma \ref{uniform operator bound}:
\begin{eqnarray}
\label{third term}
\nonumber
 \int_{\Gamma_{\epsilon, a}}\left(\II_x\left (\frac{\exp_x^{-1} (y)}{|\exp_x^{-1} (y)|_h}\right) - \II_x\left (\frac{*\exp_x^{-1} (y)}{|\exp_x^{-1} (y)|_h}\right)\right)\partial_\nu u_\epsilon(y) &=&  \epsilon^2(\lambda_1-\lambda_2) R_{\infty,a} \psi_\epsilon \\
&&+ \epsilon^3 \A_\epsilon \psi_\epsilon,
\end{eqnarray}
where  $R_{\infty,a}$ is defined in Section \ref{normal operator}.
Inserting into \eqref{integral eq} the identities \eqref{first term}, \eqref{second term}, and \eqref{third term} we have
\begin{eqnarray}
\label{integral eq 2}
\nonumber
\frac{- F(x^*) - C_{\epsilon,a} - H(x^*)|M| (4\pi)^{-1} \log\epsilon }{\epsilon} &=&
\frac{1}{2\pi} L_a \psi_\epsilon - \epsilon \frac{H(x^*)}{4\pi} R_{\log,a}\psi_\epsilon+ \epsilon \frac{\lambda_1 - \lambda_2}{16\pi} R_{\infty,a} \psi_\epsilon\\&& + a \epsilon \int_\D R(x^\epsilon( t'), x^\epsilon (s'))\psi_\epsilon(s') + \epsilon^2  \A_\epsilon \psi_\epsilon \\
\nonumber&& + O_{ H^{1/2}(\D)}(\log\epsilon).
\end{eqnarray}
We would like to approximate the integral kernel $R(x^\epsilon( t'), x^\epsilon (s'))$ by the constant $R(x^*, x^*)$ and this is the content of 
\begin{lemma}
	\label{R is constant}
	Let $T_\epsilon : C_c^\infty(\D) \to {\mathcal D}'(\D)$ be the operator defined by the integral kernel 
	$$R(x^\epsilon (t'; x^*), x^\epsilon( s'; x^*)) - R(x^*, x^*).$$ 
	Then
	$$ \| T_\epsilon\|_{( H^{1/2}(\D) )^*\to H^{1/2}(\D)} \leq C \epsilon\log\epsilon.$$
\end{lemma}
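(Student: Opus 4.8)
The plan is to exploit the fact, established in \eqref{regularity of R}, that $R \in C^{0,\mu}(\partial M \times \partial M)$ for every $\mu < 1$, and to combine this Hölder modulus of continuity with the uniform operator estimate of Lemma \ref{uniform operator bound}. First I would observe that the coordinate maps $s' \mapsto x^\epsilon(s'; x^*) = x(\epsilon s'; x^*)$ shrink distances by a factor $\epsilon$: for $s', t' \in \D$, both $d_h(x^\epsilon(s';x^*), x^*) \le C\epsilon$ and $d_h(x^\epsilon(t';x^*), x^*)\le C\epsilon$, hence by the triangle inequality and the Hölder bound on $R$,
\begin{equation*}
\left| R(x^\epsilon(t';x^*), x^\epsilon(s';x^*)) - R(x^*, x^*)\right| \le C_\mu \left( d_h(x^\epsilon(t';x^*),x^*)^\mu + d_h(x^\epsilon(s';x^*), x^*)^\mu\right) \le C_\mu \epsilon^\mu
\end{equation*}
uniformly in $s', t' \in \D$. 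Thus the integral kernel of $T_\epsilon$ is bounded in $L^\infty(\D \times \D)$ by $C_\mu \epsilon^\mu$, which already gives $\|T_\epsilon\|_{L^2(\D) \to L^2(\D)} \le C_\mu \epsilon^\mu$ and, since $H^{1/2}(\D) \hookrightarrow L^2(\D) \hookrightarrow (H^{1/2}(\D))^*$, also $\|T_\epsilon\|_{(H^{1/2}(\D))^* \to H^{1/2}(\D)}$ --- except that a bare $L^\infty$ kernel bound only controls the $L^2 \to L^2$ norm, not the gain of a full derivative into $H^{1/2}$. To recover the mapping into $H^{1/2}$ I would instead differentiate: $R$ is $C^\infty$ off the diagonal and the expansion \eqref{G expansion} shows that the singular part $G_{sing}$ carries all the non-smoothness, so $R$ is genuinely a classical symbol of order $-3$ on $\partial M$ (it is the Schwartz kernel of an operator in $\Psi^{-3}_{cl}(\partial M)$, as noted just above \eqref{regularity of R}); in particular its derivatives of order $k$ blow up no worse than $d_h(x,y)^{-1-k}$ near the diagonal, and away from the diagonal $R$ is smooth.

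The cleaner route, and the one I would actually carry out, is to write $R$ in the boundary normal coordinates $t' \mapsto x(t'; x^*)$ near $x^*$ and split it as a sum of homogeneous-in-$(s'-t')$ pieces plus a smoother remainder, exactly in the form to which Lemma \ref{uniform operator bound} applies: one writes $R(x(t';x^*), x(s';x^*)) - R(x^*,x^*)$ using a Taylor expansion in the base point together with the polyhomogeneous structure of $R$ in $r = |s'-t'|$ and $\omega = (s'-t')/r$, so that after rescaling by $\epsilon$ (using $x^\epsilon(s';x^*) = x(\epsilon s';x^*)$) the kernel becomes $\epsilon \, r^{-1} A(\epsilon, s', r, \omega) + \epsilon\, A'(\epsilon, s', r, \omega)$ with $A, A'$ smooth and uniformly bounded with all derivatives in $\epsilon$ --- the $\epsilon$ prefactor coming from the vanishing of $R(x(t';x^*), x(s';x^*)) - R(x^*,x^*)$ at $t' = s' = 0$ combined with the chain rule for the rescaling. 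The extra logarithmic factor arises because $R$ itself is only Hölder and not Lipschitz at the diagonal: the sharp modulus of continuity of a kernel whose principal symbol is of order $-3$ on a surface is $d_h(x,y)^2 \log d_h(x,y)$-type, which upon rescaling yields $\epsilon^2 \log\epsilon \cdot \epsilon^{-1} = \epsilon \log\epsilon$. Applying Lemma \ref{uniform operator bound} to the rescaled kernel with $l = 0$ then gives boundedness from $(H^{1/2}(\D))^*$ (indeed from $H^m$) to $H^{1/2}(\D)$ (to $H^{m+1}$) with the operator norm carrying the factor $\epsilon \log\epsilon$.

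The main obstacle is bookkeeping the precise source of the logarithm: one must argue that the correction $R = G_{\partial M} - G_{sing}$, while only $C^{0,\mu}$, nonetheless has a controlled expansion near the diagonal good enough that rescaling produces exactly one factor of $\epsilon$ and at most one factor of $\log\epsilon$, and no worse. This requires looking one order deeper into the parametrix than Proposition \ref{prop G expansion} states --- i.e. recording that the symbol of $R$ is classical of order $-3$ so that the next term in its singularity expansion is an order-$(-4)$ kernel behaving like $d_h(x,y)^{2}$ (smooth up to a $\log$), rather than merely invoking the Hölder bound as a black box. Once that structural fact is in hand, the estimate is a direct application of Lemma \ref{uniform operator bound} exactly as in the treatment of the other kernels in \eqref{first term}--\eqref{third term}, and the claimed bound $\|T_\epsilon\|_{(H^{1/2}(\D))^* \to H^{1/2}(\D)} \le C\epsilon\log\epsilon$ follows.
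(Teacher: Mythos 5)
Your overall strategy is the right one and is essentially the paper's: write $R$ in the rescaled coordinates, use that $R$ is the Schwartz kernel of an operator in $\Psi^{-3}_{cl}(\partial M)$ so that it admits a polyhomogeneous expansion near the diagonal, exploit the vanishing $R(x(t'),x(s'))-R(x^*,x^*)=0$ at $t'=s'=0$, and then appeal to Lemma~\ref{uniform operator bound} term by term. The paper does exactly this, invoking Prop.~2.8 in Chapter~7 of \cite{taylor2} to write
$$T(t',s')=\sum_{l=0}^N \bigl(q_l(t',t'-s')+p_l(t',t'-s')\log|t'-s'|\bigr)+R_k(t',s'),$$
with $q_l,p_l$ homogeneous of degree $l+1$ in $t'-s'$ and $R_k\in C^k$, and then rescales.

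However, the specifics of your proposed decomposition are off in ways that matter. First, since $R$ is order $-3$ on a $2$-manifold, its kernel singularity near the diagonal is of degree $(+1)$ in $r=|s'-t'|$ (with a logarithm), \emph{not} $r^{-1}$: your claimed rescaled form $\epsilon\,r^{-1}A(\epsilon,s',r,\omega)+\epsilon A'$ is that of an order $-1$ kernel such as $d_g^{-1}$, not of $R$. Correspondingly, your statement that derivatives of order $k$ blow up like $d_h(x,y)^{-1-k}$ is wrong; the correct rate is $r^{1-k}$ (with log corrections for small $k$), and the sharp modulus of continuity of $R$ is $r\log r$, not $r^2\log r$. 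Second, and more importantly, the explanation of where the logarithm comes from is not correct. It does not arise from a ``modulus of continuity $\epsilon^2\log\epsilon$ divided by $\epsilon$''; the clean mechanism is that the polyhomogeneous expansion genuinely contains terms $p_l(t',t'-s')\log|t'-s'|$, and upon substituting $(t',s')\mapsto(\epsilon t',\epsilon s')$ one gets
$$p_l(\epsilon t',\epsilon(t'-s'))\log|\epsilon(t'-s')|=\epsilon^{l+1}p_l(\epsilon t',t'-s')\bigl(\log\epsilon+\log|t'-s'|\bigr),$$
so the $\log\epsilon$ factor appears explicitly in the $l=0$ term, which is the worst one. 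The $\log\epsilon$ is thus an algebraic artifact of rescaling a log-polyhomogeneous kernel, and has to be tracked as such rather than inferred from a Hölder modulus. Once you phrase the expansion this way, the $q_0$ and $p_0\log\epsilon$ terms are handled by Lemma~\ref{uniform operator bound} with $l=1$, the $C^k$ remainder is handled by Taylor expansion plus $R_k(0,0)=0$, and the only piece needing separate treatment is $\epsilon\,p_0(\epsilon t',z')\log|z'|$, which the paper handles by a short Fourier argument using that $p_0$ is linear in $z'$. You should spell out that step too; it is not covered by the bare statement of Lemma~\ref{uniform operator bound}.
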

\begin{proof}
	Set $T(t',s') := R(x(t';x^*), x(s';x^*)) - R(x^*, x^*)$ for $t'$ and $s'$ small and extend it to be a smooth compactly supported kernel otherwise. The kernel for $T_\epsilon$ is then $T(\epsilon t', \epsilon s')$. Note that
	\begin{eqnarray}
	\label{T vanishes at zero}
	T(0,0) = 0.
	\end{eqnarray}

	Observe that if $\chi\in C_c^\infty(\R^2)$ which is identically $1$ on $\D$ then the operator $T_\epsilon$ acting on distributions supported in $\D$ is given by the Schwartz kernel
	$$ \chi(s')\chi(t')T(\epsilon t', \epsilon s')$$ 
	for $t',s'\in \D$ and $\epsilon >0$ small.
	
	Observe that $T(t',s')$ is the integral kernel for an operator in $\Psi^{-3}_{cl}(\R^2)$. Applying Prop 2.8 in Chap 7 of \cite{taylor2} we can deduce that for all $k$ we may choose $N$ sufficiently large such that 
	$$T(t',s') = \sum_{l= 0}^N \left( q_l(t', t'-s') + p_l(t', t'-s') \log|t'-s'|\right) + R_k(t',s'),$$
	where for each integer $l$ and multi-index $\gamma$, $D^\gamma_{t'} q_l(t',\cdot)$ is a bounded continuous function of $t'$ with value in the space of smooth (away from the origin) homogeneous distributions of degree $l+1$, $p_l(t',\cdot)$ are homogenous polynomials of degree $l+1$ with coefficients which are smooth functions of $t'$, and for all multi-indices $\gamma$, $D^\gamma_{t'}R_k(t',\cdot)$ bounded continuous function of $t'$ with value in $C^k(\R^2)$.
	
	Using \eqref{T vanishes at zero} along with the homogenenity degree of $q_l$ and $p_l$ we see that $R_k(0,0) = 0$. Therefore, for $s',t' \in \D$ the integral kernel of $T_\epsilon$ is 
	{\small\begin{eqnarray}
		\label{expand Teps}\nonumber
		T_\epsilon(t',s') &=& \sum_{l= 0}^N \left( \epsilon^{l+1} q_l(\epsilon t', t'-s') + \epsilon^{l+1}p_l(\epsilon t', t'-s') \log \epsilon + \epsilon^{l+1}p_l(\epsilon t', t'-s')\log|t'-s'|\right)\\&+& R_k(\epsilon t',\epsilon s').
		\end{eqnarray}}
	The kernel $R_k(s',t')$ is sufficiently smooth. Therefore, by doing a Taylor expansion and using $R_k(0,0) = 0$ we see that the integral kernel $\chi(s')\chi(t') R_k(\epsilon t', \epsilon s')$ takes $H^{1/2}(\D)^* \to H^{1/2}(\D)$ with norm $\epsilon$. The worst term in the polyhomogeneous expansion part of \eqref{expand Teps} happens when $l = 0$ and this term is given by
	$$\epsilon q_0(\epsilon t',z') + p_0(\epsilon t', z') \log |z'| + \epsilon \log\epsilon p_0(\epsilon t',z'),$$
	where $z' = t'-s'$. Recall that both $q_0$ and $p_0$ are homogeneous of degree $1$ in $z'$ so writing $z' = r\omega$ then applying Lemma \ref{uniform operator bound} we have uniform estimates in $\epsilon$ for the kernels $\chi(s')\chi(t') q_0(\epsilon t',t'-s')$ and $\chi(s')\chi(t') p_0(\epsilon t', t'-s')$. For the term involving $\log|s'-t'|$, we use the fact that $p_0(t',z')$ is a linear function in $z'$ whose coefficients are smooth functions of $t'$. Therefore, if $u\in C^\infty_c(\R^2)$,
	\begin{eqnarray*}
		\int_{\R^2} \chi(t') p_0(\epsilon t', t'-s') \log|t'-s'| u(s') ds' &=&  \chi(t')\int_{\R^2}\sum_j c_j(\epsilon t') (t_j-s_j) \log|s'-t'| u(s') ds'\\
		&=& \chi(t') \sum_j c_j(\epsilon t') \int_{\R^2}a_j(\xi') \hat u(\xi') e^{it'\cdot \xi'}d\xi',
	\end{eqnarray*}
	where $a_j(\cdot )\in {\mathcal S}'(\R^2)$ for $j =1,2$ are derivatives of the distribution ${\rm PF} |\xi|^{-2}$ with respect to $\partial_{\xi_j}$. We refer the reader to (8.31) in Chapt 3 of \cite{taylor1} for the definition of the the distribution ${\rm PF} |\xi|^{-2}$.
	From this we see that the integral kernel 
	$$\chi(t') \chi(s') p_0(\epsilon t', t'-s') \log|t'-s'|$$
	also maps $H^{1/2}(\D)^* \to H^{1/2}(\D)$ with uniform bound in $\epsilon$. 
\end{proof}

Due to Lemma \ref{R is constant} we can write \eqref{integral eq 2} as
\begin{eqnarray}
\label{integral eq 3}
\nonumber
\frac{2\pi}{ \epsilon}\left(a R(x^*, x^*) |M|- F(x^*) - C_{\epsilon,a}  -  \frac{H(x^*)|M| \log \epsilon}{4\pi} \right) =& \left ( L_a - \frac{\epsilon H(x^*)}{2} R_{\log,a}+ \frac{\epsilon (\lambda_1-\lambda_2)}{8} R_{\infty, a}\right) \psi_\epsilon\\
&+ \epsilon T_\epsilon \psi_\epsilon+ + \epsilon^2 \A_\epsilon \psi_\epsilon  + O_{ H^{1/2}(\D)}(\log\epsilon).
\end{eqnarray}
We hit both sides with $L_a^{-1}$ and use \eqref{La u = 1} and \eqref{I*I is homemo} to get the identity
\begin{eqnarray}
\label{main integral eq}
&&\frac{2\pi}{ \epsilon}\left( a R(x^*, x^*) |M|- F(x^*) - C_{\epsilon,a}  -  \frac{H(x^*)|M| \log \epsilon}{4\pi} \right) \frac{1}{ K_a (1-|t'|^2)^{1/2}}=\\\nonumber
&& \left(I - \frac{\epsilon H(x^*)}{2} L_a^{-1} R_{\log,a} + \frac{\epsilon (\lambda_1-\lambda_2)}{8} L_a^{-1}R_{\infty,a} + T'_\epsilon\right)\psi_\epsilon + O_{ H^{1/2}(\D)^*} (\log\epsilon).
\end{eqnarray}
for some $T'_\epsilon :  H^{1/2}(\D)^* \to  H^{1/2}(\D)^*$ with operator norm $O(\epsilon^2\log\epsilon)$.
Use the mapping properties from Lemma \ref{Rinfty and Rlog bounded} we see that the right side can be inverted by Neumann series to deduce 
\begin{eqnarray}
\label{first psi asym}
\psi_\epsilon =   -\frac{2 \pi C_{\epsilon,a}}{\epsilon K_a (1-|t'|^2)^{1/2}} + C_{\epsilon,a} O_{ H^{1/2}(\D)^*}(1) + O_{ H^{1/2}(\D)^*}(\epsilon^{-1}\log\epsilon).
\end{eqnarray}

Insert \eqref{first psi asym} into \eqref{average of psi} we get that
\begin{eqnarray}
\label{first C asym}
C_{\epsilon,a} = \frac{|M| K_a}{4a\epsilon \pi^2} + C'_{\epsilon,a}
\end{eqnarray}
with $C'_{\epsilon,a} = O(\log\epsilon)$. Insert \eqref{first C asym} into \eqref{first psi asym}
\begin{eqnarray}
\label{second psi asym}
\psi_\epsilon = \frac{-|M|}{2 a \pi \epsilon^2 (1- |t'|^2)^{1/2}} +\psi_\epsilon' = -\frac{|M| K_a}{2 a \pi \epsilon^2}L_a^{-1}(1) + \psi'_\epsilon
\end{eqnarray}
with $ \|\psi_\epsilon'\|_{H^{1/2}(\D;ds')^*} \leq C\epsilon^{-1}\log\epsilon$. 
Insert \eqref{first C asym} and \eqref{second psi asym} into \eqref{main integral eq} we get 

{\small \begin{eqnarray}
	\label{main integral eq2}
	&&\frac{2\pi}{ \epsilon}\left( a R(x^*, x^*) |M|- F(x^*) - C'_{\epsilon,a}  -  \frac{H(x^*)|M| \log \epsilon}{4\pi} \right) \frac{1}{K_a (1-|t'|^2)^{1/2}}=\\\nonumber
	&&  \psi'_\epsilon +  \frac{|M|K_a}{2 \pi \epsilon}\left(  \frac{ H(x^*)}{2} L_a^{-1} R_{\log,a} -  \frac{(\lambda_1-\lambda_2)}{8} L_a^{-1}R_{\infty,a}\right)L_a^{-1}(1)   + O_{ H^{1/2}(\D)^*} (\log\epsilon).
	\end{eqnarray}}
Inserting the expression \eqref{second psi asym} into \eqref{average of psi} we get that
\begin{eqnarray}
\label{log limit of average}
\int_\D \psi'_\epsilon(s')ds_1 ds_2 = O(1).
\end{eqnarray}
Multiply \eqref{main integral eq2} by $\epsilon$ then integrate over $\D$ . Then \eqref{log limit of average} implies
\begin{eqnarray} 
C'_{\epsilon,a} &=& -\frac{1}{4\pi} H(x^*) |M| \log \epsilon  + a R(x^*, x^*) |M|- F(x^*)\\\nonumber
&& -\frac{|M| H(x^*) K_a^2 }{16\pi^3} \int_{\D} L_a^{-1} R_{\log,a} L_a^{-1} 1(s') ds'\\\nonumber
&& +\frac{|M|  (\lambda_1 - \lambda_2) K_a^2 }{64 \pi^3} \int_{\D} L_a^{-1} R_{\infty,a} L_a^{-1} 1(s') ds' + O(\epsilon\log\epsilon).
\end{eqnarray}

Since $L_a^{-1}$ is self-adjoint, we can express the last two integrals more explicitly:
\begin{eqnarray*}
\int_{\D} L_a^{-1} R_{\log,a} L_a^{-1} 1(s') ds' = K_a^{-2} \langle (1-|s'|^2)^{-1/2}, R_{\log,a}(1-|s'|^2)^{-1/2}\rangle,
\end{eqnarray*}
\begin{eqnarray*}
\int_{\D} L_a^{-1} R_{\infty,a} L_a^{-1} 1(s') ds' = K_a^{-2} \langle (1-|s'|^2)^{-1/2}, R_{\infty,a}(1-|s'|^2)^{-1/2}\rangle.
\end{eqnarray*}

We summarize this calculation into the following: 
\begin{prop}
	\label{expansion of psi and Ceps}
	We have that 
	$$\psi_\epsilon = \frac{-|M|}{2 a \pi \epsilon^2 (1- |t'|^2)^{1/2}} +\psi_\epsilon' $$
	with $\psi_\epsilon' =  O_{H^{1/2}(\D)^*}(\epsilon^{-1}\log\epsilon)$. Furthermore 
	\begin{align} 
\label{Cepsa}
		C_{\epsilon,a} =& \frac{|M| K_a}{4a\epsilon \pi^2} -\frac{1}{4\pi} H(x^*) |M| \log \epsilon  + a R(x^*, x^*) |M|- F(x^*)\\
		&\nonumber -\frac{|M|  H(x^*)}{16\pi^3 }  \langle (1-|s'|^2)^{-1/2}, R_{\log,a}(1-|s'|^2)^{-1/2}\rangle\\
		&\nonumber +\frac{|M|  (\lambda_1 - \lambda_2)}{64 \pi^3} \langle (1-|s'|^2)^{-1/2}, R_{\infty,a}(1-|s'|^2)^{-1/2}\rangle\\
		&\nonumber + O(\epsilon\log\epsilon),
		\end{align}
	where $F$ is the unique solution to \eqref{F eq} and $R(x^*, x^*)$ is the evaluation at $(x,y) = (x^*, x^*)$ of the kernel $R(x,y)$ in \eqref{G expansion}. 
\end{prop}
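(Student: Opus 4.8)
The plan is to assemble the chain of identities built up in this section while keeping track of the sizes of the error operators. First I would start from the boundary integral equation \eqref{ueps} for the solution $u_\epsilon$ of the mixed boundary value problem \eqref{mixed bvp}, take its trace to $\partial M$ (legitimate by Proposition \ref{prop G expansion} i)) and restrict to the window $\Gamma_{\epsilon,a}^o$, where $u_\epsilon$ vanishes; substituting the singularity expansion \eqref{G expansion} of $G_{\partial M}$ then gives \eqref{integral eq}. Rewriting this identity in the rescaled elliptic coordinates \eqref{ellipse coord} and invoking the kernel expansions of Corollary \ref{rescaled dist kernel}, Lemma \ref{d and 1/d} and Corollary \ref{R infty expression}, together with the $\epsilon$-uniform mapping bound of Lemma \ref{uniform operator bound}, converts the four terms of \eqref{integral eq} into the operators $L_a$, $R_{\log,a}$, $R_{\infty,a}$ and the near-diagonal values of $R$, producing \eqref{integral eq 2}. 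Lemma \ref{R is constant} then replaces $R(x^\epsilon(t'),x^\epsilon(s'))$ by the constant $R(x^*,x^*)$ modulo an operator of norm $O(\epsilon\log\epsilon)$, which yields \eqref{integral eq 3}.

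Next I would invert the leading operator. Applying $L_a^{-1}$, which exists and is bounded by \eqref{I*I is homemo}, and using the explicit formula \eqref{La u = 1} for $L_a^{-1}(1)$, the equation \eqref{integral eq 3} becomes \eqref{main integral eq}. Since $L_a^{-1}R_{\log,a}$ and $L_a^{-1}R_{\infty,a}$ are bounded on $H^{1/2}(\D)^*$ by Lemma \ref{Rinfty and Rlog bounded}, the operator on the right-hand side of \eqref{main integral eq} is $I$ plus an operator of norm $O(\epsilon\log\epsilon)$ and can therefore be inverted by Neumann series; this produces the crude expansion \eqref{first psi asym} of $\psi_\epsilon$ in terms of the still-unknown constant $C_{\epsilon,a}$.

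The final step is a two-stage bootstrap. Inserting \eqref{first psi asym} into the compatibility condition \eqref{average of psi} pins down the leading-order behaviour $C_{\epsilon,a} = |M|K_a/(4a\epsilon\pi^2) + O(\log\epsilon)$, which is \eqref{first C asym}. Feeding this back into \eqref{first psi asym} upgrades the expansion of $\psi_\epsilon$ to \eqref{second psi asym} with remainder $O_{H^{1/2}(\D)^*}(\epsilon^{-1}\log\epsilon)$, and re-substituting \eqref{first C asym} and \eqref{second psi asym} into \eqref{main integral eq} gives \eqref{main integral eq2}. Multiplying \eqref{main integral eq2} by $\epsilon$, integrating over $\D$, and using $\int_\D \psi_\epsilon' = O(1)$ (which is \eqref{log limit of average}, obtained by putting \eqref{second psi asym} into \eqref{average of psi}) isolates the $O(\log\epsilon)$ correction $C'_{\epsilon,a}$ as the stated linear combination of $\int_\D L_a^{-1} R_{\log,a} L_a^{-1}1$ and $\int_\D L_a^{-1} R_{\infty,a} L_a^{-1}1$; rewriting these via the self-adjointness of $L_a^{-1}$ and \eqref{La u = 1} puts them in the bracket form displayed in \eqref{Cepsa}, and combining with \eqref{first C asym} completes the proof.

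The main obstacle — which is really discharged by the preceding lemmas rather than by this proposition itself — is the $\epsilon$-uniform control of the rescaled integral operators: one must know that after the change of variables to the fixed disk the ``error'' kernels appearing in Corollaries \ref{rescaled dist kernel} and \ref{R infty expression} and in Lemma \ref{R is constant} define operators $H^{1/2}(\D)^*\to H^{1/2}(\D)$ of norm $O(\epsilon)$, respectively $O(\epsilon\log\epsilon)$, so that the Neumann inversion of \eqref{main integral eq} and the two-step bootstrap are justified and the remainder is genuinely $O(\epsilon\log\epsilon)$. Given Lemmas \ref{uniform operator bound} and \ref{R is constant} the argument is then bookkeeping, but it is precisely this step that fixes the order of the final error term.
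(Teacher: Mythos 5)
Your proposal reproduces the paper's argument step by step: from \eqref{ueps} to \eqref{integral eq}, then to \eqref{integral eq 2}--\eqref{integral eq 3} via the rescaled kernel expansions and Lemmas \ref{uniform operator bound} and \ref{R is constant}, inversion by $L_a^{-1}$ and Neumann series to get \eqref{main integral eq}--\eqref{first psi asym}, and the two-stage bootstrap via \eqref{average of psi} to extract \eqref{first C asym}, \eqref{second psi asym}, and finally $C'_{\epsilon,a}$ by multiplying \eqref{main integral eq2} by $\epsilon$ and integrating. This matches the paper's proof essentially verbatim, including the identification of the $\epsilon$-uniform operator bounds as the crux.
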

Observe that in the case of the disc (i.e. $a=1$) we have that
$$\langle (1-|s'|^2)^{-1/2}, R_{\log,a}(1-|s'|^2)^{-1/2}\rangle = \pi^2 \left({8} \log 2 -{6}\right) $$ by Lemma \ref{integral of Rlog} and 
$$\langle (1-|s'|^2)^{-1/2}, R_{\infty,a}(1-|s'|^2)^{-1/2}\rangle= 0$$ by Lemma \ref{integral of Rinfty}. Thus the formula \eqref{Cepsa} simplifies to 
\begin{eqnarray} 
\label{Ceps}
		C_\epsilon := C_{\epsilon,1} &=&\frac{|M| K_a}{4a\epsilon \pi^2} -\frac{1}{4\pi} H(x^*) |M| \log \epsilon  + a R(x^*, x^*) |M|- F(x^*)\\\nonumber &-&\frac{|M|  H(x^*)}{16\pi } \left(8 \log 2 -6\right) + O(\epsilon\log\epsilon),
		\end{eqnarray}
when $a = 1$.

\subsection{Proof of Theorems \ref{main theorem disk} and \ref{main theorem}}
We now prove Theorem \ref{main theorem}. Theorem \ref{main theorem disk} follows from the explicit expression for $C_\epsilon$ in \eqref{Ceps}.

By the result of Appendix A we have that $u_\epsilon = \mathbb{E}[\tau_{\Gamma_{\epsilon, a}} | X_0 = x]$ solves the mixed boundary value problem \eqref{mixed bvp} so using Proposition \ref{expansion of psi and Ceps}, \eqref{ueps}, and \eqref{volume form in epsilon}, the expansion for $ \mathbb{E}[\tau_{\Gamma_{\epsilon, a}} | X_0 = x]$ is given by
$$ u_\epsilon(x) = \mathbb{E}[\tau_{\Gamma_{\epsilon, a}} | X_0 = x] = F(x) + C_{\epsilon,a} -|M| G(x,x^*) + r_\epsilon (x)$$
 for each $x\in M\backslash \Gamma_{\epsilon,a}$. Here $F$ is the unique solution to \eqref{F eq} and the remainder $r_\epsilon$ is given by
\begin{eqnarray}
\label{r eps remainder}
r_\epsilon(x) = \int_{\partial M}( G(x,y) - G(x,x^*))\partial_{\nu} u_\epsilon (y) {\rm dvol}_h(y).
\end{eqnarray}
Let $K\subset \overline M$ be a compact subset of $\overline M$ which has positive distance from $x^*$ and consider $x\in K$. Writing out this integral in the $x^\epsilon(\cdot; x^*)$ coordinate system and use \eqref{def of psi}, \eqref{volume form in epsilon}, and the expression of $\psi_\epsilon$ derived in Proposition \ref{expansion of psi and Ceps} we get
\begin{eqnarray}
r_\epsilon(x) &=& \epsilon  \int_{\D}  \frac{-|M|}{2\pi (1- |s'|^2)^{1/2}} L(x,\epsilon s') (1+\epsilon^2 Q_\epsilon(s')) ds'\\\nonumber
&+& a\epsilon^3 \int_{\D}\psi'_\epsilon (s') L(x,\epsilon s') (1+ \epsilon^2 Q_\epsilon(s')) ds'
\end{eqnarray}
for some function $L(x,s')$ jointly smooth in $(x,s') \in K\times \D$. The second integral formally denotes the duality between $H^{1/2}(\D)^*$ and $H^{1/2}(\D)$. The estimate for $\psi'_\epsilon$ derived in Proposition \ref{expansion of psi and Ceps} now gives for any integer $k$ and any compact set $K$ not containing $x^*$, $\| r_\epsilon\|_{C^k(K)}\leq C_{k,K}\epsilon$.

Our pseudodifferential characterization of $G_{\partial M}$ also allows us to compute the asymptotic of the average $\int_M u_\epsilon$. Indeed, integrating \eqref{ueps} over $M$ we get
\begin{equation}
\label{int of u}
\int_M u_\epsilon {\rm dvol}_g= \int_M F(x){\rm dvol}_g + C_{\epsilon,a} |M| + \int_M \int_{\Gamma_{\epsilon, a}} G(x,y) \partial_\nu u_\epsilon(y) {\rm dvol}_h(y) {\rm dvol}_g(x).
\end{equation}
We compute the last integral by noting that
$$v(x) := \int_{\Gamma_{\epsilon, a}} G(x,y) \partial_\nu u_\epsilon(y) {\rm dvol}_h(y)$$
is the unique solution to the Dirichlet boundary value problem:
\begin{eqnarray}
\label{v dirichlet}
\Delta_g v = 0,\ \ v(x) \mid_{\partial M} = \int_{\Gamma_{\epsilon, a}} G_{\partial M}(x,y) u_\epsilon(y) {\rm dvol}_h(y) \in H^{1/2}(\partial M).
\end{eqnarray}
We concluded the boundary value is in $H^{1/2}$ because $G_{\partial M}\in \Psi^{-1}_{cl}(\partial M)$ by \eqref{G expansion}.

Let a sequence of smooth functions $f_j \to \partial_\nu u_\epsilon$ in $H^{-1/2}(\partial M)$ and let $v_j$ solve 
$$\Delta_g v_j = 0,\ \ v_j(x) \mid_{\partial M} = \int_{\partial M} G_{\partial M}(x,y) f_j(y) {\rm dvol}_h(y).$$
Standard elliptic theory shows that $v_j \to v$ in $H^1(M)$. Therefore 
\begin{eqnarray*}
	\int_M \int_{\Gamma_{\epsilon, a}} G(x,y) \partial_\nu u_\epsilon(y)  &=& \lim_j  \int_M \int_{\partial M} G(x,y) f_j(y)\\ &=&\lim_j \int_{\partial M}  f_j(y) \int_{M} G(x,y) \\
	&=& \lim_j \int_{\partial M}  f_j(y) F(y)  \\
	&=& \langle \partial_\nu u_\epsilon , F\rangle = -F(x^*)|M| + O(\epsilon) ,
\end{eqnarray*}
where $F$ is the solution to the boundary value problem \eqref{F eq} and $\langle\cdot,\cdot\rangle$ denotes the pairing between $H^{-1/2}(\partial M)$ and $H^{1/2}(\partial M)$. The last equality comes from \eqref{compatible}, smoothness of $F$, and ${\rm supp}(\partial_\nu u_\epsilon) \subset \Gamma_{\epsilon, a}$. Inserting this into \eqref{int of u} we have
$$\int_M u_\epsilon {\rm dvol}_g = \int_M F(x) {\rm dvol}_g+ C_{\epsilon,a} |M| - F(x^*) |M| + O(\epsilon).$$
The constant $C_{\epsilon,a}$ is given by Proposition \ref{expansion of psi and Ceps}.

\end{section}

\begin{section}{Appendix A -Elliptic Equation for the first passage time}
\label{appendix A}
In this appendix we show that $u(x) := \mathbb{E}[\tau_{\Gamma} | X_0 = x]$ satisfies the boundary value problem \eqref{mixed bvp}. This is standard material but we could not find a suitable reference which precisely addresses our setting. As such we are including this appendix for the convenience of the reader.

Let $(M,g,\partial M)$ be an orientable compact connected Riemannian manifold with non-empty smooth boundary oriented by ${\rm dvol}_g$. Let also $(X_t, \mathbb{P}_x)$ be the Brownian motion on $M$ starting at $x$, that is, the stochastic process generated by the Laplace-Beltrami operator $\Delta_g$. Let $\Gamma$ be a geodesic ball on $\partial M$ with radius $\varepsilon > 0$. We denote by $\tau_{\Gamma}$ the first time the Brownian motion $X_t$ hits $\Gamma$, that is
	\begin{equation*}
	\tau_{\Gamma} := \inf \{ t\geq 0: X_t \in \Gamma\}.
	\end{equation*}
	We set
	\begin{equation*}
	\mathcal{P}_{\Gamma}(t,x) := \mathbb{P}[\tau_{\Gamma} \leq t| X_0 = x].
	\end{equation*}
	Let us note that $\mathcal{P}_{\Gamma}(t,x)$ is the probability that the Brownian motion hits $\Gamma$ before or at time $t$, and therefore, satisfies
	\begin{equation}\label{initial_condition}
	\mathcal{P}_{\Gamma}(0,x) = 0, \quad x\in M\setminus \Gamma,
	\end{equation}
	\begin{equation}\label{on_window}
	\mathcal{P}_{\Gamma}(t,x) = 1, \quad (t,x)\in  [0,\infty) \times \Gamma.
	\end{equation}
	
	Note that, for any compact subset $\Gamma\subset M$, it follows\footnote{Note that in \cite{GrigoryanSaloff-Coste} and \cite{ilmavirta}, the authors consider the manifold together with its boundary, and $C_c^{\infty}(M)$, $C_0^{\infty}(M)$ denote the set of smooth (up to the boundary) functions with compact support. In case of compact manifold, these sets coincide with $C^{\infty}(\overline{M})$.}
	\begin{equation*}
	\text{Cap}(\Gamma, M) := \inf_{u\in C^{\infty}(\overline{M}), \left.u\right|_{\Gamma} = 1} \int_{M} \|\nabla u\|^2 {\rm dvol}_g =0.
	\end{equation*}
	Then, \cite[Theorem 1.5]{ImperaPigolaSetti} implies that $(M,g,\partial M)$ is parabolic, that is, the probability that the Brownian motion ever hits any compact set $F$ with non-empty interior is $1$. Since $\Gamma \subset \partial M$ is connected with non-empty interior on $\partial M$, we can extend $M$ to a compact connected Riemannian manifold $\tilde{M}$ such that $\overline{\tilde{M} \setminus M}$ is compact with non-empty interior and $\overline{\tilde{M} \setminus M} \cap M = \Gamma$. Note that, the Brownian motion, starting at any point $M \setminus \Gamma$, hits $\overline{\tilde{M} \setminus M}$ if and only if it hits $\Gamma$. Therefore, the parabolicity condition of $(M, g)$ gives 
	\begin{equation}\label{lim_inf}
	\lim_{t \rightarrow \infty} \mathcal{P}_{\Gamma}(t,x) = 1, \quad x \in M.
	\end{equation}
	Further, let us define the mean first arrival time $u$, as
	\begin{equation}
\label{u as expected val}
	u(x) := \mathbb{E}[\tau_{\Gamma} | X_0 = x] := \int_{0}^{\infty} t d \mathcal{P}_{\Gamma}(t,x),
	\end{equation}
	where the integral is a Riemann-Stieltjes integral. To investigate $u$, let us recall some properties of $\mathcal{P}_{\Gamma}$. By Remmark 2.1 in \cite{GrigoryanSaloff-Coste}, it follows that
	\begin{equation*}
	1 - \mathcal{P}_{\Gamma}(t,x) = \left(e^{t\Delta_{mix}}1\right)(x),
	\end{equation*}
	where $e^{t\Delta_{mix}}$ is the semigroup with infinitesimal generator $\Delta_{mix}$, and $\Delta_{mix}$ is the Laplace operator $\Delta_g$ corresponding to the Dirichlet boundary condition on $\Gamma$ and Neumann boundary condition on $\partial M \setminus \Gamma$, which is defined as follows
	\begin{align}
	\label{dom of deltamix}
	&\mathrm{D}(\Delta_{mix}):=\{u\in H^1(M): \; \Delta_gu \in L^2 \;\left.u\right|_{\Gamma} = 0, \; \left.\partial_{\nu} u\right|_{\Gamma^c} = 0\}\\
	&\Delta_{mix} u = \Delta_g u \quad u \in \mathrm{D}(\Delta_{mix}).
	\end{align}
	In \eqref{dom of deltamix} we define $\partial_\nu u \in H^{-1/2}(\partial M)$ using the same method for defining the Dirichlet to Neumann map. That is, for $u\in H^1(M)$ such that $\Delta_g u \in L^2(M)$, the distribution $\partial_{\nu} u\left. \right|_{\partial M} \in H^{-1/2}(\partial M)$ acts on $f\in H^{1/2}(\partial M)$ via
	\begin{equation*}
	\langle \partial_{\nu}u\left. \right|_{\partial M} , f\rangle := \int_{M} \Delta u_g \overline{v_f} \; dvol_g + \int_{M} g(du,dv_f) \; dvol_g, 
	\end{equation*}
where $v_f\in H^1(M)$ is the harmonic extension of $f$.	We say that $\partial_{\nu} u\left. \right|_{\overline{\omega}} = 0$, for non-empty open set $\omega\subset \partial M$, if $\langle \partial_{\nu}u\left. \right|_{\partial M} , f\left. \right|_{\partial M}\rangle = 0$ for all $f \in H^{1/2}(\partial M)$ such that $f|_{\partial M\setminus \overline{\omega}} = 0$.

	Note that if $u$ sufficiently regular, for instance $u\in H^2(M)$, then $\langle\partial_{\nu}u\left. \right|_{\partial M} , f\rangle$ is equal to the boundary integral of $\partial_{\nu}u\left. \right|_{\partial M} $ and $ f$.
	
    In fact, $\Delta_{mix}$ can be equivalently defined by quadratic form; see Proposition \ref{Noperators} in Appendix B. Moreover, $\Delta_{mix}$ is the non-positive self-adjoint operator with the discrete spectrum, consisting of negative eigenvalues accumulating at $-\infty$; see Proposition \ref{Noperators} in Appendix.   Hence, $\Delta_{mix}$ satisfies the quadratic estimate
	\begin{equation*}
	\int_0^{\infty} \| t\Delta_{mix}(1 + t^2\Delta_{mix}^2)^{-1} u \|^2_{L^2}\frac{dt}{t} \leq C\|u\|^2,
	\end{equation*}
	for some $C>0$ and all $u\in L^2(M)$; see for instance \cite[p. 221]{McIntosh}. Therefore, $\Delta_{mix}$ admits the functional calculus defined in \cite{Nursultanov}. 
	\begin{Rem}
		The functional calculus in \cite{Nursultanov} is defined for a concrete operator, which is denoted by $T$ in the notation used in that article. However, $\Delta_{mix}$ satisfy all necessary conditions to admit this functional calculus.
	\end{Rem}
	Therefore, the semigroup $e^{t\Delta_{mix}}$, which is contracting by Hille-Yosida theorem \cite[Theorem 8.2.3]{Jost}, can be defined as follows
	\begin{equation*}
	e^{t\Delta_{mix}} u = \frac{1}{2\pi i}\int_{\gamma_{a,\alpha}} e^{t\zeta}(\zeta - \Delta_{mix})^{-1} u d\zeta, \qquad u\in L^2(M),
	\end{equation*}
	where $a\in (\tau , 0)$, $\alpha \in (0, \frac{\pi}{2})$, and $\gamma_{a,\alpha}$ is the anti-clockwise oriented curve:
	\begin{equation*}
	\gamma_{a,\alpha}:= \{ \zeta \in \mathbb{C}: \textrm{Re}\zeta \leq a, \text{ } |\textrm{Im}\zeta| = |\textrm{Re}\zeta - a|\tan\alpha\}.
	\end{equation*}
	Let $\varepsilon>0$ such that $a + \varepsilon < 0$. Then $\Delta_{mix} + \varepsilon$ is also a negative self-adjoint operator, and hence generates contracting semigroup, $e^{t(\Delta_{mix} + \varepsilon)}$, as above.
	
	By definition, we obtain, for $u \in L^2(M)$,
	\begin{align}\label{Delta_eps}
	e^{t\Delta_{mix}} u &= \frac{1}{2\pi i}\int_{\gamma_{a,\alpha}} e^{t\zeta}(\zeta - \Delta_{mix})^{-1} u d\zeta \\
	&\nonumber = \frac{e^{-t\varepsilon}}{2\pi i} \int_{\gamma_{a,\alpha}} e^{t(\zeta + \varepsilon)}(\zeta + \varepsilon - (\Delta_{mix} + \varepsilon))^{-1} u d\zeta\\
	&\nonumber = \frac{e^{-t\varepsilon}}{2\pi i} \int_{\gamma_{a + \varepsilon,\alpha}} e^{t\xi}(\xi - (\Delta_{mix} + \varepsilon))^{-1} u d\xi =  e^{-t\varepsilon} e^{t(\Delta_{mix} + \varepsilon)}u,
	\end{align}
	where $\gamma_{a + \varepsilon,\alpha} = \gamma_{a,\alpha} + \varepsilon \subset \{\textrm{Re}\xi < 0\}$.
	Let $f_1$ the constant function on $M$ equals $1$. By Theorem 8.2.2 in \cite{Jost}, we know, for $\lambda > 0$,
	\begin{equation*}
	(\lambda - (\Delta_{mix} + \varepsilon))^{-1}f_1 = \int_{0}^{\infty} e^{-\lambda t} e^{t(\Delta_{mix} + \varepsilon)}f_1 dt.
	\end{equation*}
	Let us choose $\lambda = \varepsilon$, then, by using \eqref{Delta_eps}, we obtain
	\begin{equation*}
	-\Delta_{mix}^{-1} f_1 = \int_{0}^{\infty} e^{-\varepsilon t} e^{t(\Delta_{mix} + \varepsilon)}f_1 dt = \int_{0}^{\infty} e^{t\Delta_{mix}} f_1 dt
	\end{equation*}
	and hence, 
	\begin{equation}\label{laplcae_P}
	\int_{0}^{\infty} 1 - \mathcal{P}_{\Gamma}(t,x) dt = -(\Delta_{mix}^{-1} f_1)(x) < \infty.
	\end{equation}
	Therefore, the dominated convergence theorem implies  
	\begin{equation*}
	\lim_{b \rightarrow \infty} \int_{0}^{b} \left(\mathcal{P}_{\Gamma}(b,x) - \mathcal{P}_{\Gamma}(t,x) \right)dt = \int_{0}^{\infty} 1 - \mathcal{P}_{\Gamma}(t,x) dt <\infty.
	\end{equation*}
	Hence, by using \eqref{u as expected val} and integration by parts, we obtain
	\begin{align*}
	u(x) &= \lim_{b \rightarrow \infty} \left( \mathcal{P}_{\Gamma}(b,x) b -\int_{0}^{b} \mathcal{P}_{\Gamma}(t,x) dt\right) = \lim_{b \rightarrow \infty} \int_{0}^{b} \left(\mathcal{P}_{\Gamma}(b,x) - \mathcal{P}_{\Gamma}(t,x) \right)dt < \infty\\
	& = \int_{0}^{\infty} 1 - \mathcal{P}_{\Gamma}(t,x) dt.
	\end{align*}
	Therefore, by \eqref{laplcae_P}, we obtain
	\begin{equation*}
	\Delta_{mix} u = -f_1=-1.
	\end{equation*}
	In particular, $u\in \mathrm{D}(\Delta_{mix})$, and hence,
	\begin{equation*}
	u \mid_{\Gamma} = 0, \qquad \partial_\nu u \mid_{\partial M\backslash \Gamma} = 0.
	\end{equation*}
We see that \eqref{mixed bvp} is satisfied.

\end{section}

\begin{section}{Appendix B - Quadratic Form}
Let $(M,g,\partial M)$ be a compact connected Riemannian manifold with non-empty smooth boundary. Let $\Gamma$ be a closed subset of $\partial M$ such that $\partial M\setminus \overline{\Gamma^c}$ is a non-empty open set. Consider the quadratic form
\begin{align}
a[u,v] := \int_{M} g(d u,d \overline{v}) {\rm dvol}_g, \qquad  u, v\in \mathrm{D}(a):=\{u\in H^1(M): \; \left.u\right|_{\Gamma} = 0 \}.
\end{align}
Note that $\mathrm{D}(a)$ is closed subspace of $H^1(M)$ containing $H_0^1(M)$ and $a[\cdot,\cdot]$ is a non-negative, closed, densely defined form. Therefore, by Friedrichs Theorem 2.23 in \cite{Kato}, it generates a non-negative self-adjoint operator $-\Delta_a$ in $L^2(M)$ whose domain is contained in $D(a)$ such that $(-\Delta_au,u)_{L^2(M)}=a[u,u]$ for $u\in \mathrm{D}(-\Delta_a)$.

Let us show that the resolvents of $-\Delta_a$ are compact. Assume that $s$ belongs to the resolvent set of $-\Delta_a$. Since $-\Delta_a(-\Delta_a - s)^{-1} : L^2(M) \rightarrow L^{2}(M)$ is bounded, it is suffices to show that $D(-\Delta_a)$, endowed with the graph norm, compactly embedded into $L^2(M)$. Since, for $u\in D(-\Delta_a)$, 
\begin{equation*}
\|du\|^2_{L^2(M)} = (du, du)_{L^2(M)}= a[u,u] = (-\Delta_au, u)_{L^2(M)}\leq \frac{1}{4}(\|-\Delta_au\|_{L^2(M)} + \|u\|_{L^2(M)})^2,
\end{equation*}
we see that any bounded sequence in $D(-\Delta_a)$, endowed with the graph norm, is bounded in $H^1(M)$, and hence, it contains a Cauchy subsequence in $L^2(M)$ by Rellich-Kondrachov theorem. This implies that the resolvents of $-\Delta_a$ are compact, and hence, the spectrum of $-\Delta_a$ is discrete, consisting of non-negative eigenvalues accumulating at $+\infty$. Assume that $\lambda = 0$ is an eigenvalue, and let $u_0 \in \mathrm{D}(a)$ be a corresponding eigenfunction. 

Then the Poincar\'e-Wirtinger inequality gives, for some $C>0$,
\begin{equation*}
\left\| u - \frac{1}{|M|} \int_{M} u {\rm dvol}_g\right\|_{L^2} \leq C\|d u\|_{L^2} = C(-\Delta_a u, u)_{L^2} = 0,
\end{equation*}
so that $u_0$ is a constant in $L_2(M)$. Since $u_0\in H^1(M)$, we conclude that $u_0=const$ in $L^2(\partial M)$, and hence, $u_0=0$ by choice of $\Gamma$. Therefrore $\lambda = 0$ is not an eigenvalue, and consequently, the spectrum of $-\Delta_a$ is positive. For sake of completeness, we prove the following well known result.

\begin{prop}\label{Noperators}
	Let $-\Delta_a$ be the operator defined above and $-\Delta_{mix}$ be the operator defined in Section \ref{appendix A}, then $-\Delta_a = -\Delta_{mix}$. In particular, $-\Delta_{mix}$ is a self-adjoint operator with the positive discrete spectrum accumulating at infinity.
\end{prop}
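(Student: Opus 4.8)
The plan is to prove the operator identity $-\Delta_a=-\Delta_{mix}$ by establishing the two inclusions of graphs separately; once this is done, the ``in particular'' clause is immediate, since it was already shown above that $-\Delta_a$ is self-adjoint, has compact resolvent, and has strictly positive discrete spectrum accumulating only at $+\infty$, and these properties then transfer verbatim to $-\Delta_{mix}$. The only two tools I would use are: (a) the first representation theorem for the form $a$ (\cite{Kato}), in the form ``$u\in\mathrm{D}(-\Delta_a)$ with $-\Delta_a u=w$ if and only if $u\in\mathrm{D}(a)$ and $a[u,v]=(w,v)_{L^2(M)}$ for all $v\in\mathrm{D}(a)$''; and (b) the Green-type identity attached to the definition of the conormal derivative in Appendix \ref{appendix A}: for $u\in H^1(M)$ with $\Delta_g u\in L^2(M)$ and every $v\in H^1(M)$ one has $a[u,v]=-\int_M(\Delta_g u)\,\overline v\,{\rm dvol}_g+\langle\partial_\nu u|_{\partial M},v|_{\partial M}\rangle$. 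I would obtain (b) by splitting $v=v_f+v_0$ with $f=v|_{\partial M}$, $v_f$ its harmonic extension and $v_0\in H^1_0(M)$, applying the defining formula for $\partial_\nu u|_{\partial M}$ to the $v_f$-piece and the distributional definition of $\Delta_g$ (extended from $C_c^\infty(M^o)$ to $H^1_0(M)$ using $\Delta_g u\in L^2$) to the $v_0$-piece.

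For $-\Delta_a\subseteq-\Delta_{mix}$: given $u\in\mathrm{D}(-\Delta_a)$ with $w:=-\Delta_a u$, I first test $a[u,v]=(w,v)_{L^2}$ against $v\in C_c^\infty(M^o)$ to see that $\Delta_g u=\Delta_a u\in L^2(M)$, so $\partial_\nu u|_{\partial M}\in H^{-1/2}(\partial M)$ is defined; also $u|_\Gamma=0$ since $u\in\mathrm{D}(a)$. Comparing $a[u,v]=(w,v)_{L^2}$ with the Green identity then forces $\langle\partial_\nu u|_{\partial M},v|_{\partial M}\rangle=0$ for every $v\in\mathrm{D}(a)$, which is exactly the condition $\partial_\nu u|_{\overline{\Gamma^c}}=0$ in the sense of Appendix \ref{appendix A}, once one knows that the traces of $\mathrm{D}(a)$ exhaust the admissible test functions there; hence $u\in\mathrm{D}(\Delta_{mix})$ with $\Delta_{mix}u=\Delta_g u=\Delta_a u$. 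For the reverse inclusion $-\Delta_{mix}\subseteq-\Delta_a$: given $u\in\mathrm{D}(\Delta_{mix})$, we have $u\in H^1(M)$ and $u|_\Gamma=0$, hence $u\in\mathrm{D}(a)$; the Green identity together with $\partial_\nu u|_{\overline{\Gamma^c}}=0$ and $v|_\Gamma=0$ gives $a[u,v]=(-\Delta_g u,v)_{L^2}$ for all $v\in\mathrm{D}(a)$ with $-\Delta_g u\in L^2(M)$, so the representation theorem yields $u\in\mathrm{D}(-\Delta_a)$ and $-\Delta_a u=-\Delta_g u=-\Delta_{mix}u$. Combining the two inclusions gives $-\Delta_a=-\Delta_{mix}$, and the spectral assertions follow from the properties of $-\Delta_a$ recorded above.

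The step I expect to require the most care is matching the two notions of ``vanishing Neumann data'': showing that $\langle\partial_\nu u|_{\partial M},v|_{\partial M}\rangle=0$ for all $v\in\mathrm{D}(a)$ is equivalent to the Appendix \ref{appendix A} requirement that $\langle\partial_\nu u|_{\partial M},f\rangle=0$ for all $f\in H^{1/2}(\partial M)$ with $f|_{\partial M\setminus\overline{\Gamma^c}}=0$. This reduces to the density of $\{v|_{\partial M}:v\in H^1(M),\ v|_\Gamma=0\}$ in $\{f\in H^{1/2}(\partial M):f|_{\operatorname{int}\Gamma}=0\}$, which follows from surjectivity of the trace map $H^1(M)\to H^{1/2}(\partial M)$ and a cut-off near $\Gamma$; since in the application $\Gamma$ is a geodesic ball, $\Gamma=\overline{\operatorname{int}\Gamma}$ and there is nothing pathological. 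Establishing the Green identity (b) at the regularity level ``$u\in H^1(M)$, $\Delta_g u\in L^2(M)$'' is the other place where one must be slightly careful, but it is essentially the very definition of $\partial_\nu u|_{\partial M}$ used in Appendix \ref{appendix A}; no compactness or spectral input beyond what was already proved for $-\Delta_a$ is needed.
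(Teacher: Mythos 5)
Your proposal is correct and follows essentially the same route as the paper: establish a generalized Green's identity at the regularity level $u\in H^1(M)$, $\Delta_g u\in L^2(M)$, then prove the two graph inclusions by combining this identity with Kato's first representation theorem. The only cosmetic differences are that you derive the Green's identity via the $H^1_0$/harmonic-extension splitting and the distributional definition of $\Delta_g$ (the paper instead invokes $H^2$-regularity for $\omega:=V-v$), and that you spell out the trace-density step needed to match the two notions of vanishing Neumann data, a point the paper leaves implicit.
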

\begin{proof}
	Assume that $u$, $v\in H^1(M)$ and $\Delta_g u$, $\Delta_g u \in L^2(M)$. Let $V$ be the harmonic extension of $v\left.\right|_{\partial M}$, then $\omega:= V - v \in H^1_0(M)$ and $\Delta_g \omega\in L^2(M)$. By $H^2$-regularity, $\omega\in H^2(M)$. The generalized Green's identity gives
	\begin{equation*}
	-\int_{M}u \Delta_g \omega \; dvol_g + \int_{M} \Delta_g u \omega \; dvol_g = \langle\partial_{\nu} u\left.\right|_{\partial M}, \omega\left.\right|_{\partial M}\rangle - \langle u\left.\right|_{\partial M}, \partial_{\nu}\omega\left.\right|_{\partial M}\rangle,
	\end{equation*}
	where the first term of the right hand side vanishes since $\omega\in H_0^1(M)$. Therefore we get
	\begin{align*}
	0 &= -\int_{M}u \Delta_g \omega \; dvol_g + \langle u\left.\right|_{\partial M}, \partial_{\nu}\omega\left.\right|_{\partial M}\rangle + \int_{M} \Delta_g u \omega \; dvol_g \\
	&= \int_{M} g(d u ,d \overline{\omega}) \; dvol_g + \int_{M} \Delta_g u \omega \; dvol_g.
	\end{align*}
	Hence, we obtain
	\begin{equation*}
	\langle\partial_{\nu}u\left. \right|_{\partial M} , v\left. \right|_{\partial M}\rangle = \int_{M} \Delta_g u  \overline{v} \; dvol_g + \int_{M} g(d u ,d \overline{v}) \; dvol_g
	\end{equation*}
	for $u$, $v\in H^1(M)$ and $\Delta_g u$, $\Delta_g u \in L^2(M)$.

    Assume that $u\in \mathrm{D}(\Delta_{mix}) \subset \mathrm{D}(a)$ and $v\in\mathrm{D}(\Delta_a)$, then, by above formula,
	\begin{align*}
	a[u,v] = \int_{M} g(d u ,d \overline{v}) \; dvol_g = -\int_{M} \Delta_g u  \overline{v} \; dvol_g + \langle\partial_{\nu}u\left. \right|_{\partial M} , v\left. \right|_{\partial M}\rangle
	\end{align*}
	Note that the last term vanishes since $u\in \mathrm{D}(\Delta_{mix})$ and  $v\in \mathrm{D}(a)$, so that 
	\begin{equation*}
	a[u,v] = -\int_{M} \Delta_g u  \overline{v} \; dvol_g = -\int_{M} \Delta_{mix} u  \overline{v} \; dvol_g.
	\end{equation*}
	Since this holds for all $v\in \mathrm{D}(\Delta_a)$, it follows from Theorem 2.1., in \cite{Kato}, that $u\in \mathrm{D}(\Delta_a)$ and $\Delta_a u = \Delta_{mix} u$.
	
	Conversely, assume that $u \in \mathrm{D}(\Delta_a)$, then $-\Delta_g u =-\Delta_a u \in L^2(M)$. Then, it follows
    \begin{align*}
    \langle\partial_{\nu}u\left. \right|_{\partial M} , f\rangle =  \int_{M} \Delta u_g \overline{V_f} \; dvol_g + \int_{M} g(du,dV_f) \; dvol_g = a[u,V_f] - a[u,V_f] = 0.
    \end{align*}
    for any $f \in H^{1/2}(\partial M)$ such taht $f\left.\right|_{\Gamma} = 0$. This means that $\partial_{\nu}\left.u\right|_{\Gamma^c} = 0$, so that $u\in D(\Delta_{mix})$ and $\Delta_a u = \Delta_{mix} u$.

\end{proof}
\end{section}

\bibliographystyle{plain}
\bibliography{references}

\setlength{\parskip}{0pt}

%\begin{thebibliography}{99} 

%\bibitem{Martin} Martin P.A. EXACT SOLUTION OF SOME INTEGRAL
%EQUATIONS OVER A CIRCULAR DISC, 2006 

%\bibitem{HJU} R. Hiptmair, C. Jerez-Hanckes and C. Urz�a-Torres, {\em Closed-Form Inverses of the Weakly Singular and Hypersingular Operators on Disks}, 2018
%\bibitem{tibault} T. Lefeuvre, {\em Sur la rigidit\'e des vari\'et\'es Riemanniennes}, 2019.

%\end{thebibliography}

\end{document}